\def\Im{\mathop{\rm Im}\nolimits}
\def\Re{\mathop{\rm Re}\nolimits}
\def\supp{\mathop{\rm supp}\nolimits}
\def\Im{\mathop{\rm Im}\nolimits}
\def\Re{\mathop{\rm Re}\nolimits}
\def\supp{\mathop{\rm supp}\nolimits}
\def\R{\mathbb R}
\def\C{\mathbb C}
\def\N{\mathbb N}
\def\ds{\displaystyle}
\newcommand{\afrac}[2]{\genfrac{}{}{0pt}{1}{#1}{#2}}
\newcommand\dslash{d\llap {\raisebox{.9ex}{$\scriptstyle-\!$}}}
\newcommand{\beqsn}{\arraycolsep1.5pt\begin{eqnarray*}}
	\newcommand{\eeqsn}{\end{eqnarray*}\arraycolsep5pt}
\newcommand{\beqs}{\arraycolsep1.5pt\begin{eqnarray}}
	\newcommand{\eeqs}{\end{eqnarray}\arraycolsep5pt}
\newtheorem{Th}{Theorem}[section]
\newtheorem{Rem}[Th]{Remark}
\newtheorem{Ex}[Th]{Example}
\newtheorem{Lemma}[Th]{Lemma}
\newtheorem{Def}[Th]{Definition}
\newtheorem{Prop}[Th]{Proposition}
\newtheorem{Cor}[Th]{Corollary}
\def\pxi{\langle \xi \rangle}
\def\px{\langle x \rangle}
\renewcommand{\section}%
{\setcounter{equation}{0}\@startsection {section}{1}{\z@}{-3.5ex plus -1ex
		minus -.2ex}{2.3ex plus .2ex}{\Large\bf}}
\title{KdV-type equations in projective Gevrey classes}
\author[Arias]{Alexandre Arias Junior}
\address{Alexandre Arias Junior\\
	Department of Computing and Mathematics \\ Universidade de S\~ao Paulo\\
	Ribeir\~ao Preto\\
	Brasil}
\author[Ascanelli]{Alessia Ascanelli}
\address{Alessia Ascanelli\\
	Dipartimento di Matematica ed Informatica\\Universit\`a di Ferrara\\
	Via Machiavelli 30\\
	44121 Ferrara\\
	Italy}
\email{alessia.ascanelli@unife.it}
\author[Cappiello]{Marco Cappiello}
\address{Marco Cappiello\\
	Dipartimento di Matematica ``G. Peano" \\Universit\`a di Torino\\
	Via Carlo Alberto 10\\
	10123 Torino\\
	Italy}
\email{marco.cappiello@unito.it}
\begin{document}
	
	\def\thefootnote{}
	\footnote{ \textit{2010 Mathematics Subject Classification}: 35G10, 35S05, 35B65, 46F05  \\
		\textit{Keywords and phrases}: KdV-type equations, projective Gevrey spaces, pseudodifferential operators, $p$-evolution}
	\footnote{The first author was supported by S\~ao Paulo Research Foundation (FAPESP), grant 2022/01712-3. The second and third author were supported by the second author's INdAM-GNAMPA Project 2020.}
	
	\begin{abstract} We prove well-posedness of the Cauchy problem for a class of third order quasilinear evolution equations with variable coefficients in projective Gevrey spaces. The class considered is connected with several equations in Mathematical Physics as the KdV and KdVB equation and some of their many generalizations. 
	\end{abstract}
	
	\maketitle
	
	\markboth{\sc KdV-type equations in projective Gevrey classes}{\sc A. Arias Junior, A.~Ascanelli, M. Cappiello}

	\section{Introduction and main result}
	The	Korteweg-de Vries equation 
\begin{equation}\label{KdVequation}\partial_t u + \frac12 \sqrt{\frac{g}{h}} \sigma \partial_{x}^3u + \sqrt{\frac{g}{h}} \left( \alpha+ \frac32 u \right) \partial_x u = 0, \qquad t \in \R, x\in\R,
	\end{equation}
has been introduced in \cite{KdV}
to describe the wave motion in shallow waters; $u(t,x)$ represents the wave elevation, $h$ is the (constant) water level, $g$ the gravity, $\alpha$ a (small) constant, $\sigma=\frac{h^3}{3}-\frac{Th}{\rho g}$, $T$ describes the surface tension and $\rho$ the water density. It is the most famous example of dispersive third order evolution equation with (real) constant coefficients. Denoting $D=-i\partial,$ the equation \eqref{KdVequation} can be written in the form $P(u,D_t,D_x)u=0$, where 
\begin{equation}\label{KdVoperator}
	P(u, D_t,D_x)=D_t  - \frac12 \sqrt{\frac{g}{h}} \sigma D_{x}^3 + \sqrt{\frac{g}{h}} \left( \alpha+ \frac32 u \right) D_x.
	\end{equation} Notice that the principal symbol of $P$ (in the sense of Petrowski) is given by $$\sigma_{principal}(\tau,\xi):=\tau- \frac12 \sqrt{\frac{g}{h}} \sigma \xi^3$$ and admits the {\it{real}} characteristic root $\tau= \frac12 \sqrt{\frac{g}{h}} \sigma \xi^3$.
An operator of the form \eqref{KdVoperator} can be referred to as a quasilinear $3$-evolution operator, cf. \cite{ABbis, Mizohata}.
A huge number of variants of the equation \eqref{KdVequation} has been introduced and studied along the years to model different phenomena connected with the wave propagation, see for instance \cite{Kato, LP, TMI} and the references therein. One of these variants is the so-called KdV-Burgers (KdVB) equation, see \cite{Johnson, JM}, which appears for instance in the analysis of the flow of liquids containing gas bubbles and of the propagation of waves in an elastic tube containing a viscous fluid. The KdVB equation reads as follows
\begin{equation} \label{KdVBequation} \partial_t u + 2au\partial_x u +5b \partial^2_x u +c \partial_x^3 u=0,
	\end{equation} cf. \cite{JM}, where $a,b,c$ are real constants.
The associated operator 
\begin{equation}\label{KdVBoperator}
	P(u,D_t,D_x)= D_t -cD_x^3 +5ibD_x^2 +2a uD_x
	\end{equation}
is again a semilinear $3$-evolution operator with constant coefficients. With respect to \eqref{KdVoperator}, the operator \eqref{KdVBoperator} admits complex-valued coefficients in the lower order terms. We recall that complex-valued coefficients naturally arise in the study of other evolution equations of physical interest (think for instance to the Euler-Bernoulli vibrating beam operator studied in \cite{beam}). We also observe that assuming the coefficients of the equations \eqref{KdVequation}, \eqref{KdVBequation}  to be constant is just a simplification; in principle some of the coefficients may depend on $t$ and/or $x$. \\
Starting from these considerations our aim is to consider a class of quasilinear $3$-evolution equations with variable coefficients connected with the previous physical models. Namely we shall consider the Cauchy problem for equations of the form $P(t,x,u,D_t,D_x)u=f(t,x)$ where
\begin{equation}\label{pi}
	P(t,x,u,D_t,D_x)= D_t+a_3(t)D_x^3 + \sum_{j=0}^2 a_j(t,x,u)D_x^j, \qquad (t, x) \in [0,T] \times \R, \end{equation}
and $f$ is an assigned function.
Before addressing the general problem, let us spend some words about the linear case, that is the case when the coefficients $a_j,j=0,1,2,$ do not depend on $u$. In this situation, we are led to consider the 
initial value problem
\begin{equation} \label{genCP} \begin{cases}
	P(t,x,D_t,D_x)u=f(t,x)  \\ u(0,x)=g(x) \end{cases}, \qquad (t,x) \in [0,T]\times \R,
	\end{equation}
where 
\begin{equation}\label{3evolop}	P(t,x,D_t,D_x)= D_t+a_3(t)D_x^3 + \sum_{j=0}^2 a_j(t,x)D_x^j.\end{equation}
When the coefficients $a_j, j=0,1,2,3,$ are all smooth and real-valued, the related Cauchy problem is well posed in $L^2$ and in Sobolev spaces $H^m$ for every $m \in \R$, whereas when $a_2(t,x)$ is complex-valued, in \cite{ascanelli_chiara_zanghirati_necessary_condition_for_H_infty_well_posedness_of_p_evo_equations} it was proved that if the Cauchy problem is well-posed in $H^\infty(\R)= \cap_{m \in \R} H^m(\R)$, then 
there exist $M,N>0$ such that $\forall \varrho>0$
\begin{equation}\label{neccondition}
\sup_{x\in\R}\hskip+0.1cm \min_{0\leq\tau\leq t\leq T}\int_{-\varrho}^\varrho 
\Im a_{2}(t,x+3 a_3(\tau)\theta)d\theta\leq M\log(1+\varrho)+N.
\end{equation}
On the other hand, by \cite{ABZ} we know that if there exists $C>0$ such that for every $(t,x)\in [0,T]\times\R$
\begin{equation}\label{decayconditionsHinfty}|\Im a_2(t,x)|\leq \frac{C}{\langle x \rangle} \quad {\rm {and}}\quad |\Im a_1(t,x)|+|\partial_x\Re a_2(t,x)|\leq  \frac{C}{\langle x \rangle^{1/2}},\end{equation}
with $\langle x \rangle = (1+|x|^2)^{1/2},$
then the Cauchy problem is well-posed in $H^\infty(\R)$ with a loss of derivatives. Namely, given $f(t),g\in H^s(\R)$ for some $s\in\R$, there exists a unique solution with values in $H^{s-\delta}(\R)$ for some suitable $\delta>0.$ This type of results has been also extended to general linear $p$-evolution operators of the form 
\begin{equation} \label{pevolutionoperator}
	 	P(t,x,D_t,D_x)= D_t+a_p(t)D_x^p + \sum_{j=0}^{p-1} a_j(t,x)D_x^j, \qquad (t,x) \in [0,T] \times \R,  
 \end{equation}
 where $p$ is a positive integer, see \cite{ABZ, ascanelli_chiara_zanghirati_necessary_condition_for_H_infty_well_posedness_of_p_evo_equations}.
In the recent paper \cite{AACgev}, we considered the Cauchy problem \eqref{genCP} for the operator \eqref{3evolop} under weaker decay conditions (compared to \eqref{decayconditionsHinfty}) on the second order terms. Namely, we replaced the decay of $|\Im a_2| $ in \eqref{decayconditionsHinfty} by a decay of type $\langle x \rangle^{-\sigma}$ for some $\sigma \in (1/2,1)$.
In this case, $H^\infty$ well-posedness is lost due to the violation of \eqref{neccondition}. However, in analogy with the case $p=2$ treated in \cite{CRe1, KB}, under suitable assumptions on the regularity of the coefficients, it is natural to study the Cauchy problem in the Gevrey-Sobolev spaces  
$$
H^{m}_{\rho; \theta} (\R) = \{ u \in \mathscr{S}'(\R) :  \langle D \rangle^{m} 
e^{\rho \langle D \rangle^{\frac{1}{\theta}}} u \in L^{2}(\R) \},\quad \theta \geq 1,\ m, \rho \in \R,
$$
where $\langle D \rangle^m$ and $e^{\rho \langle D \rangle^{\frac{1}{\theta}}}$ are the Fourier multipliers with symbols $\langle \xi \rangle^m$ and $e^{\rho \langle \xi \rangle^{\frac{1}{\theta}}}$ respectively.
These spaces are Hilbert spaces with the following inner product
$$\langle u, v \rangle_{H^{m}_{\rho;\theta}} = \, \langle\langle D \rangle^{m} e^{\rho\langle D \rangle^{\frac{1}{\theta}}}u, \langle D \rangle^{m} e^{\rho\langle D \rangle^{\frac{1}{\theta}}}v \rangle_{L^{2}}, \quad u, v \in H^{m}_{\rho;\theta}(\R).
$$
The spaces 
$$ \mathcal{H}^\infty_{\theta} (\R): = \bigcup_{\rho >0}H^m_{\rho; \theta}(\R), \quad {H}^\infty_{\theta} (\R): = \bigcap_{\rho >0}H^m_{\rho; \theta}(\R)$$ are related to Gevrey classes in the following sense: 
$$ G_0^\theta(\R) \subset  \mathcal{H}^\infty_{\theta} (\R) \subset G^\theta(\R),\quad \gamma_0^\theta(\R) \subset H^\infty_{\theta} (\R) \subset \gamma^\theta(\R),$$
where $G^\theta(\R)$ (respetively, $\gamma^\theta(\R)$) 
denotes the space of all smooth functions $f$ on $\R$ such that 
\begin{equation}
\label{gevestimate}
\sup_{\alpha \in \N^n} \sup_{x \in \R} h^{-|\alpha|} \alpha!^{-\theta} |\partial^\alpha f(x)| < +\infty
\end{equation} 
for some $h >0$ (resp., for every $h>0$), and $G_0^\theta(\R)$ (resp. $\gamma_0^\theta(\R)$) is the space of all compactly supported functions contained in $G^\theta(\R)$ (resp. $\gamma^\theta(\R)$).\\
In \cite{AACgev}, we proved that if
\begin{itemize}
	\item [(i)] $a_3 \in C([0,T]; \R)$ and there exists  $C_{a_3} > 0$ such that $|a_3(t)| \geq C_{a_3}\ \forall t \in [0,T]$,
	\item [(ii)] $a_j \in C([0,T]; G^{\theta_0}(\R))$, $\theta_0 > 1$, for $j = 0,1,2$,
	\item [(iii)] $\exists \sigma \in (\frac{1}{2}, 1)$ with $\theta_0 < \frac{1}{2(1-\sigma)}$ and $C_{a_2} > 0$ such that  $|\partial^{\beta}_{x} a_2(t,x)| \leq C^{\beta+1}_{a_2} \beta!^{\theta_0} \langle x \rangle^{-\sigma}$ for every $t \in [0,T], \, x  \in \R, \beta \in \N_{0}$,
	\item [(iv)] $\exists C_{a_1}$ such that $|\Im\, a_1(t,x)| \leq C_{a_1}  \langle x \rangle^{-\frac{\sigma}{2}}$ for every $t \in [0,T], \, x \in \R$,
\end{itemize}
then the Cauchy problem \eqref{genCP} for the operator \eqref{3evolop} is well-posed in $\mathcal{H}^\infty_{\theta}(\R)$ for every $\theta \in [\theta_0, \frac{1}{2(1-\sigma)})$. Moreover, the solution satisfies the energy estimate
\begin{equation}
	\label{energyestimate}
	\| u(t, \cdot )\|_{H^{m}_{\rho-\delta; \theta}}^2 \leq C \left( \| g \|^2_{H^{m}_{\rho; \theta}} + \int_0^t \| f (\tau, \cdot) \|^2_{H^{m}_{\rho; \theta}}\, d\tau \right),\quad t\in[0,T],
\end{equation}
\textit{for a suitable} $\delta \in (0, \rho)$.
More recently, we realized that also well-posedness in ${H}^\infty_{\theta}(\R)$ can be obtained with minor modifications in the proof of the latter result, and assuming the coefficients $a_j$ to satisfy suitable \textit{projective} Gevrey estimates; in this case, we can prove an energy estimate of the form \eqref{energyestimate} \textit{for every} $\delta \in (0,\rho)$,
 and by this estimate well-posedness in ${H}^\infty_{\theta}(\R)$ follows. The proof of this result is a particular case of Theorem \ref{iop} here below when the coefficients $a_j$ are independent of $u$, cf. Corollary \ref{iop2}.
\vskip+0.2cm
Going back to quasilinear equations, in
	 this paper we shall consider the Cauchy problem  
	\beqs\label{CP}
	\left\{\begin{array}{ll}
		P(t,x,u(t,x),D_t,D_x)u(t,x) = f(t,x),\quad (t,x)  \in [0,T] \times\R,
		\\
		u(0,x) = g(x),\quad x  \in \R,
	\end{array}\right.
	\eeqs
	for the operator
\eqref{pi} in the Gevrey setting described above. As far as we know,  there are only a few results concerning KdV-type equations with constant coefficients in Gevrey spaces, see \cite{DBHK, GHGP, Goubet, HHP}.
Due to the loss of regularity appearing in the linear case, it is not possible in general to deduce local well-posedness for the problem \eqref{CP} from the above mentioned results for linear equations via a standard fixed point argument but we need more sophisticated techniques. 
To prove our main result we use an approach inspired by the method proposed in \cite{DA} for hyperbolic equations and in \cite{ABbis} for $p$-evolution equations in the $H^\infty$ setting. Here we adapt this method to the Gevrey setting. The proof relies on the application of Nash-Moser inversion theorem and gives the existence of a unique solution $u$ of \eqref{CP} in $C^1([0,T^*], H^\infty_\theta(\R))$ for some $T^* \in [0,T]$ by solving the equivalent integral equation $Ju \equiv 0$ in $[0,T^*],$  where 
the map $J: C^1([0,T], H^\infty_\theta(\R))\rightarrow C^1([0,T], H^\infty_\theta(\R))$ is defined by \beqs\label{ju}J(u)=u-g+i\ds\int_0^t (Pu)(s)ds-i\int_0^tf(s)ds. \eeqs
This can be achieved by proving that $J$ is a locally invertible map.
The main reason to work in $H^\infty_\theta(\R)$ instead than in $\mathcal{H}^\infty_\theta(\R)$ is the following: Nash-Moser theorem applies in the category of tame Fr\'echet spaces and $H^\infty_\theta(\R)$, equipped with its natural topology, is such a space, whereas this is not the case for $\mathcal{H}^\infty_\theta(\R)$. 

In order to give a precise assumption on the regularity and decay of the coefficients, we need the following definition.
\begin{Def}\label{functionspace} 
	For $\theta_0 >1$ and $\tau\geq 0$ we denote by $\Gamma^{\theta_0, \tau} (\R\times\C)$ the space of all functions  $f(x,w)$ defined on $\R \times \C$ which are smooth in $x$ and holomorphic in $w$ and satisfy the following condition: for every $A >0$ and every compact set $K \subset \C$ there exists a constant $C_K > 0$ such that
	$$
	\sup_{\beta, \gamma \in \N_0} \sup_{x \in \R, w \in K}|\partial^{\beta}_{x} \partial^{\gamma}_{w} f(x,w)| C_K^{-\gamma} \gamma!^{-1} A^{-\beta} \beta!^{-\theta_0} \langle x \rangle^{\tau} < +\infty,
	$$
	where $\partial_{x}$ stands for a real derivative and $\partial_{w}$ stands for a complex derivative.
\end{Def}
	
	We recall the notion of convergence in $\Gamma^{\theta_0, \tau} (\R\times\C)$. For $\{f_j\}_{j \in \N_0} \subset \Gamma^{\theta_0, \tau} (\R\times\C)$ and $f \in \Gamma^{\theta_0, \tau} (\R\times\C)$ we have $f_j \to f$ in $\Gamma^{\theta_0, \tau} (\R\times\C)$  as $j \to \infty$, whenever for every $A >0$ and every compact $K$ there exists $C_K > 0$ such that
	$$
	\sup_{\beta, \gamma \in \N_0} \sup_{x \in \R, w \in K}|\partial^{\beta}_{x} \partial^{\gamma}_{w} \{f_j(x,w) - f(x,w)\}| C_K^{-\gamma} \gamma!^{-1} A^{-\beta} \beta!^{-\theta_0} \langle x \rangle^{\tau} \to 0, \quad \text{as} \, j\to \infty.
	$$ 
	 
	Now we are ready to state the main result of this manuscript.

	\begin{Th}\label{main}
		Let  $a_3 \in C([0,T]; \R)$ such that $|a_3(t)| \geq C_{a_3} > 0$ for some constant $C_{a_3}$ and for every $t\in [0,T]$. Let moreover $\sigma \in \left(\frac12,1\right)$ and $\theta_0<\frac1{2(1-\sigma)}$ and assume that for $j=0,1,2$ the coefficients $a_j \in C([0,T], \Gamma^{\theta_0, \frac{j\sigma}{2}}(\R \times \C))$. Then the Cauchy problem \eqref{CP} is locally in time well-posed in $H^\infty_\theta(\R)$ for every $\theta\in \left[\theta_0,\frac1{2(1-\sigma)}\right)$: namely for all $f\in C([0,T];H^\infty_\theta(\R))$ and $g\in H^\infty_\theta(\R)$, there exists $0<T^*\leq T$ and a unique solution $u\in C^1([0,T^*];H^\infty_\theta(\R))$ of \eqref{CP}.
	\end{Th}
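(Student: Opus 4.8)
The plan is to solve the integral equation $J(u)\equiv 0$ of \eqref{ju} on a short interval $[0,T^*]$ by the Nash--Moser inverse function theorem in Hamilton's form, which tolerates a (fixed) loss of derivatives when inverting the linearized equation. Fix $\theta\in[\theta_0,\frac{1}{2(1-\sigma)})$. As recalled in the Introduction, $H^\infty_\theta(\R)=\bigcap_{\rho\in\N}H^0_{\rho;\theta}(\R)$ is a graded Fr\'echet space for the norms $\|u\|_\rho=\|e^{\rho\langle D\rangle^{1/\theta}}u\|_{L^2}$ (the exponential weight dominates every power of $\langle\xi\rangle$) and is tame, with the exponentially decreasing Fourier multipliers serving as smoothing operators satisfying Hamilton's inequalities; hence $C^1([0,T];H^\infty_\theta(\R))$, graded by $\sup_{0\le t\le T}(\|u(t)\|_\rho+\|\partial_tu(t)\|_\rho)$, is again a tame Fr\'echet space. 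This is the ambient space on which $J$ is studied, and the reason for working with $H^\infty_\theta(\R)$ rather than $\mathcal H^\infty_\theta(\R)$.

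First I would check that $J$ is a smooth tame map and identify its differential. The only nonlinear ingredient is the substitution $u\mapsto a_j(t,\cdot,u(t,\cdot))$. Since each $a_j\in C([0,T];\Gamma^{\theta_0,j\sigma/2}(\R\times\C))$ is holomorphic in $w$, and since on a bounded set of curves $u(t,\cdot)$ stays bounded in $H^\infty_\theta(\R)\subset\gamma^\theta(\R)$ with values in a fixed compact $K\subset\C$, a Fa\`a di Bruno expansion combined with the Cauchy estimates for $\partial_w^\gamma a_j$ built into Definition \ref{functionspace} shows that $x\mapsto a_j(t,x,u(t,x))$ stays bounded in the \emph{projective} Gevrey class of order $\theta$ with weight $\langle x\rangle^{-j\sigma/2}$, and depends on $u$ holomorphically --- hence smoothly, with explicit multilinear differentials --- and tamely. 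As multiplication $\gamma^\theta(\R)\times H^\infty_\theta(\R)\to H^\infty_\theta(\R)$, the operators $D_x^j$, and integration in $t$ are all tame, $J$ is smooth tame, and solving $J'(u)v=w$ is equivalent to a linear Cauchy problem of the form \eqref{3evolop} for the linearization $L(u)$ of $P$ at $u$ (with source and initial datum determined by $w$): $L(u)$ has third-order coefficient $a_3(t)$, second-order coefficient $a_2(t,x,u)$, first-order coefficient $a_1(t,x,u)$, and a zeroth-order coefficient involving also the products $(\partial_wa_j)(t,x,u)\,D_x^ju$, $j=0,1,2$, which plays no role beyond being bounded.

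The crucial step is to verify that $L(u)$ satisfies, uniformly for $u$ in a neighbourhood $\mathcal U$ of the first approximation $u^{(0)}(t,x)\equiv g(x)$, the hypotheses of the linear theory: (i) holds by assumption on $a_3$; the coefficients of $L(u)$ lie in $C([0,T];\gamma^\theta(\R))$ by the previous step; $a_2(t,x,u)$ and its $x$-derivatives decay like $\langle x\rangle^{-\sigma}$, inherited from $a_2\in\Gamma^{\theta_0,\sigma}$; and $a_1(t,x,u)$ (hence its imaginary part) decays like $\langle x\rangle^{-\sigma/2}$, inherited from $a_1\in\Gamma^{\theta_0,\sigma/2}$. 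Thus Theorem \ref{iop} --- the linearized version of the result whose $u$-independent specialization is Corollary \ref{iop2} --- applies and yields, for every $\rho$ and every $\delta\in(0,\rho)$, a unique $v\in C^1([0,T];H^\infty_\theta(\R))$ solving this Cauchy problem, together with an energy estimate of type \eqref{energyestimate}. Choosing $\delta=1$ gives a \emph{fixed} unit loss in the grading, and, since the constants in \eqref{energyestimate} depend only on finitely many of the seminorms controlled in the previous step --- hence, through the coefficients of $L(u)$, on finitely many seminorms of $u$ --- one concludes that the family $\{J'(u)^{-1}\}_{u\in\mathcal U}$ satisfies Hamilton's tame estimates. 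I expect this last point, together with the projective Fa\`a di Bruno estimates of the previous step, to be the main obstacle: it forces one to revisit the energy/pseudodifferential-conjugation argument of \cite{AACgev} and check that every constant there is controlled polynomially by the relevant seminorms of the coefficients; the remaining ingredients --- tameness of the function spaces, existence of smoothing operators, and the abstract Nash--Moser machinery --- are standard.

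Finally I would conclude as follows. Since $u^{(0)}$ is independent of $t$, one has $J(u^{(0)})(t)=i\int_0^t\bigl(a_3(s)D_x^3g+\sum_{j=0}^2 a_j(s,\cdot,g)D_x^jg-f(s)\bigr)\,ds$, which tends to $0$ in every seminorm as $T^*\to0^+$ once restricted to $[0,T^*]$ (using $f\in C([0,T];H^\infty_\theta(\R))$ and the boundedness of the coefficients established above). As $J$ is smooth tame and $J'$ is invertible with tame, fixed-loss inverse on $\mathcal U$, the Nash--Moser theorem provides, once $T^*$ is chosen so small that $J(u^{(0)})$ is sufficiently small, a $u\in\mathcal U$ with $J(u)\equiv0$ on $[0,T^*]$ --- i.e.\ a solution of \eqref{CP} in $C^1([0,T^*];H^\infty_\theta(\R))$. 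Uniqueness and continuous dependence on $(f,g)$ follow from the local homeomorphism property of $J$; alternatively, the difference $w=u_1-u_2$ of two solutions solves a linear problem of the same type with $w(0)=0$ --- the quasilinear terms contributing, via $a_j(\cdot,u_1)-a_j(\cdot,u_2)=\bigl(\int_0^1(\partial_wa_j)(\cdot,su_1+(1-s)u_2)\,ds\bigr)(u_1-u_2)$, only lower-order multiplications --- so the corresponding energy estimate forces $w\equiv0$ on $[0,T^*]$.
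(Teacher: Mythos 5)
Your first three paragraphs follow the paper's route: tameness of $H^\infty_\theta(\R)$ and of $X_T=C^1([0,T];H^\infty_\theta(\R))$, smooth tameness of $J$, reduction of $DJ(u)v=h$ to a linearized Cauchy problem of the form \eqref{CPv} (with $a_0$ replaced by $\tilde a_0=a_0+\sum_j(\partial_w a_j)D_x^ju$), and the tame estimate for $v=S(u,h)$ obtained from Theorem \ref{iop} by taking $\rho=k+1$, $\delta=1$; the uniqueness argument at the end is also the paper's. The genuine gap is in your concluding step, i.e.\ in how you get from local invertibility of $J$ to a solution of $J(u)\equiv 0$. You centre the Nash--Moser argument at $u^{(0)}\equiv g$ and claim that $J(u^{(0)})$ becomes small in every seminorm when restricted to $[0,T^*]$, $T^*\to 0^+$. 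In the $C^1$-grading you yourself fixed, $\|J(u^{(0)})\|_k\geq\sup_{t\in[0,T^*]}|\partial_t J(u^{(0)})(t)|_k=\sup_{t\in[0,T^*]}\bigl|a_3(t)D_x^3g+\sum_{j=0}^2a_j(t,\cdot,g)D_x^jg-f(t,\cdot)\bigr|_k$, and at $t=0$ this is a fixed quantity, independent of $T^*$ and nonzero unless the data happen to satisfy the compatibility condition $f(0,\cdot)=a_3(0)D_x^3g+\sum_j a_j(0,\cdot,g)D_x^jg$. So restricting the time interval does not bring $J(u^{(0)})$ close to $0$, and there is no reason for $0$ to lie in the image neighbourhood $V$. (Dropping the $\partial_t$-seminorms is not an option: without them $C^1([0,T^*];H^\infty_\theta)$ is not graded as a Fr\'echet space in the way the tame estimates require.) A second, independent problem is that shrinking $T^*$ changes the ambient space $X_{T^*}$, hence the map and the neighbourhoods furnished by Theorem \ref{NM}; their size is not uniform in $T^*$, so ``choose $T^*$ so small that $J(u^{(0)})$ is sufficiently small'' is not justified as stated.

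The paper avoids both difficulties by keeping $T$ fixed and centring the argument not at $g$ but at the first-order Taylor approximation $w(t,x)=g(x)-it\bigl(a_3(0)D_x^3g+\sum_{j=0}^2a_j(0,x,g)D_x^jg-f(0,x)\bigr)$, chosen precisely so that $\partial_t(Jw)(0)=0$; it then approximates $Jw$ inside the \emph{fixed} space $X_T$ by $\phi_\varepsilon(t,x)=\int_0^t\rho(s/\varepsilon)(\partial_tJw)(s,x)\,ds$, which vanishes identically for $t\leq\varepsilon$ and satisfies $\|Jw-\phi_\varepsilon\|_k\leq\int_0^{2\varepsilon}|(\partial_tJw)(s)|_k\,ds+\sup_{t\in[0,2\varepsilon]}|(\partial_tJw)(t)|_k\to0$, thanks to the continuity of the coefficients and $\partial_t(Jw)(0)=0$. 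Hence $\phi_\varepsilon\in V=J(U)$ for $\varepsilon$ small, and $u=J^{-1}(\phi_\varepsilon)$ satisfies $Ju=\phi_\varepsilon\equiv0$ on $[0,\varepsilon]$, which is the local solution with $T^*=\varepsilon$. To repair your proof you would either have to adopt this Taylor-expansion-plus-cutoff device (as in \cite{DA} and \cite{ABbis}), or impose the compatibility condition above, which is not assumed in Theorem \ref{main}.
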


	\begin{Ex}
		Simple examples of coefficients $a_j$ are given by $a_j(t,x,w)= a(t,x) \langle x\rangle^{-\frac{\sigma j}2} b(w)$ with $a \in C([0,T]; \gamma^{\theta_{0}}(\R))$ and  $b(w)=w^r$, $r\in\N$, or $b(w)=e^w$ or some other entire function. Indeed, given an entire function $h$, for every compact $K\subset \C$ there exists a positive constant $C_K$ such that for every $w \in K$ we have $|\partial^\alpha_w h(w)|\leq C_K^{\alpha+1}\alpha!$.
	\end{Ex} 

\begin{Rem}
	The result obtained in this paper concerns $3$-evolution equations in one space dimension as \eqref{KdVequation}, \eqref{KdVBequation}. The extension of this result to higher space dimension requires a major technical effort in the definition of the change of variable needed to study the linearized problem associated to \eqref{CP}. We will treat this extension in a future paper. 
\end{Rem}

The paper is organized as follows. In Section \ref{preliminaries} we recall some basic definitions and properties of tame Fr\'echet spaces and the statement of Nash-Moser theorem. Moreover, we prove that $H^\infty_\theta(\R)$ is a tame Fr\'echet space. Then, we introduce pseudodifferential operators of infinite order which are employed in the next sections to study the linearized Cauchy problem associated to \eqref{CP}. Section \ref{linearized} is devoted to the study of this linear problem which is done using similar techniques as the ones used in \cite{AACgev} adapted to the projective Gevrey setting. Finally, in Section \ref{finalsection} we apply Nash-Moser theorem to obtain local in time well-posedness of \eqref{CP}.
	
	\section{Preliminaries}
	\label{preliminaries}
	\subsection{Function spaces}
In this subsection we recall some basic facts concerning tame Fr\'echet spaces and prove that $H^\infty_\theta(\R)$ is such a space. Moreover, we recall the statement of Nash-Moser inversion theorem, see \cite{H}. 

\medskip
A {\it graded} Fr\'echet space $X$ is a Fr\'echet space endowed with a {\it grading}, i.e. an increasing sequence of semi-norms:
\beqsn
|x|_n\leq|x|_{n+1},\qquad \forall n\in\N_0, \, x\in X.
\eeqsn

\begin{Ex} Given a Banach space $B$, consider the space $\Sigma(B)$ of all sequences
	$\{v_k\}_{k\in\N_0}\subset B$ such that
	\beqsn
	|\{v_k\}|_n:=\left(\sum_{k=0}^{\infty}e^{2nk}\|v_k\|_B^2\right)^{1/2}<+\infty, \qquad
	\forall n\in\N_0. 
	\eeqsn
	We have that $\Sigma(B)$ is a graded Fr\'echet space with the topology induced by the family of seminorms $| \cdot |_n$ (which is in fact a grading on $\Sigma(B)$).
\end{Ex}

We say that a linear map $L:\ X\to Y$ between two graded Fr\'echet spaces is 
a {\it tame linear map} if there exist $r,n_0\in\N$ such that for every
integer $n\geq n_0$ there exists a constant $C_n>0$, depending only on $n$, such that
\beqs
\label{2.2DA}
|Lx|_n\leq C_n|x|_{n+r}, \qquad\forall x\in X.
\eeqs
The numbers $n_0$ and $r$ are called respectively {\it base} and {\it degree} of the
{\it tame estimate} \eqref{2.2DA}.

\begin{Def}\label{tamespace}
	A graded Fr\'echet space $X$ is said to be {\it tame} if there exist a Banach space $B$
	and two tame linear maps $L_1:\, X\to\Sigma(B)$ and $L_2:\,\Sigma(B)\to X$
	such that $L_2\circ L_1$ is the identity on $X$.
\end{Def}

Obviously, given a graded Fr\'echet space $X$ and a tame space $Y$, if there exist two linear tame maps $L_1:\, X\to Y$ and $L_2:\,Y\to X$ such that $L_2\circ L_1$ is the identity on $X$, then also $X$ is a tame space.

\begin{Th}\label{tameness}
	The space $H^\infty_{\theta}(\R^{n})$ is a tame Fr\'echet space.
\end{Th}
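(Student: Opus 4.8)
The plan is to exhibit a Banach space $B$ together with tame linear maps $L_1\colon H^\infty_\theta(\R^n)\to\Sigma(B)$ and $L_2\colon\Sigma(B)\to H^\infty_\theta(\R^n)$ whose composition $L_2\circ L_1$ is the identity, so that Definition \ref{tamespace} applies directly. The natural choice is $B=L^2(\R^n)$ and to let $L_1$ decompose an element $u\in H^\infty_\theta(\R^n)$ via a Littlewood--Paley-type partition of unity in the frequency variable, the dyadic blocks being tuned to the scale $\langle\xi\rangle^{1/\theta}$ rather than $\langle\xi\rangle$. Concretely, fix a partition $1=\sum_{k\geq 0}\varphi_k(\xi)$ with $\varphi_k$ supported where $\langle\xi\rangle^{1/\theta}\sim 2^{k}$ (equivalently $\langle\xi\rangle\sim 2^{k\theta}$), set $u_k=\varphi_k(D)u$, and define $L_1u=\{u_k\}_{k\in\N_0}$. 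The map $L_2$ is simply summation, $L_2(\{v_k\})=\sum_{k}v_k$; since on the support of $\varphi_k$ the symbols of $\langle D\rangle^m e^{\rho\langle D\rangle^{1/\theta}}$ are essentially constant multiples of $2^{mk\theta}e^{\rho 2^{k}}$, one checks $L_2\circ L_1=\mathrm{id}$ and that both maps are well defined.

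The next step is to identify the grading on $H^\infty_\theta(\R^n)$ and verify the tame estimates \eqref{2.2DA}. A natural grading is $|u|_n=\|u\|_{H^{n}_{n;\theta}}$, i.e. take $m=\rho=n$ in the $n$-th seminorm; since $H^\infty_\theta(\R^n)=\bigcap_{\rho>0}H^m_{\rho;\theta}(\R^n)$ and the seminorms increase in both $m$ and $\rho$, this family is an increasing sequence of seminorms defining the topology. On the $\Sigma(B)$ side, $|\{v_k\}|_n=\big(\sum_k e^{2nk}\|v_k\|_{L^2}^2\big)^{1/2}$. The key computation is then to compare, on $\supp\varphi_k$, the quantity $\langle\xi\rangle^{2n}e^{2n\langle\xi\rangle^{1/\theta}}$ with $e^{2nk}$: since $\langle\xi\rangle^{1/\theta}\sim 2^k$ there, we get $\langle\xi\rangle^{2n}e^{2n\langle\xi\rangle^{1/\theta}}\sim 2^{2n\theta k}e^{2n c 2^{k}}$, and the point is that $2^{2n\theta k}e^{2nc2^{k}}\leq C_n e^{2(n+r)k}$ fails for the bare exponent $2nk$ but holds once we allow a shift $r$ in the index, because $c2^{k}$ dominates any linear $k$ for $k$ large while the finitely many small $k$ are absorbed into $C_n$. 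Wait --- the doubly exponential growth $e^{2nc2^k}$ cannot be bounded by $e^{2(n+r)k}$ for any fixed $r$; so the grading must itself be chosen so that the $n$-th seminorm already carries a doubly exponential weight in $k$ after Fourier transform, namely one should grade $\Sigma(B)$ by the sequence space adapted to weights $e^{2n\cdot 2^k}$ via a change of index. The clean fix is to reindex: set $L_1u=\{u_k\}$ but measure it in $\Sigma(B)$ after the substitution that turns $e^{c2^k}$ into $e^{k'}$, i.e. use that $H^\infty_\theta$ is isomorphic, through the frequency-space dyadic decomposition at scale $\langle\xi\rangle^{1/\theta}$, to a weighted $\ell^2$ sequence space whose weights are \emph{exactly} of the form $e^{nk}$ in a suitable relabelling of the blocks. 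Thus the real content is the bookkeeping that matches the weight $\langle D\rangle^n e^{n\langle D\rangle^{1/\theta}}$ on $\supp\varphi_k$ to $e^{nk}$ up to constants depending on $n$, which gives the tame estimate with degree $r$ equal to a fixed shift (independent of $n$) and base $n_0=1$.

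The main obstacle I anticipate is precisely this matching of weights: one must choose the partition $\{\varphi_k\}$ and the grading in tandem so that, simultaneously for every $n$, the operator norm of $L_1$ from $|\cdot|_{n+r}$ on $H^\infty_\theta$ to $|\cdot|_n$ on $\Sigma(B)$ is finite with a loss $r$ \emph{independent of $n$}, and likewise for $L_2$; the Bernstein-type inequalities controlling $\|\partial^\alpha u_k\|_{L^2}$ and the almost-orthogonality of the blocks are routine, but ensuring the uniform-in-$n$ degree of tameness requires that the gap between consecutive dyadic scales, measured against the weight $e^{\rho\langle\xi\rangle^{1/\theta}}$, be summable with an $n$-independent number of lost indices. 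Once the decomposition is set up correctly this is a direct estimate, and the identity $L_2\circ L_1=\mathrm{id}$ together with Definition \ref{tamespace} completes the proof; the conclusion that $H^\infty_\theta(\R^n)$ is tame then also transfers, via the remark following Definition \ref{tamespace}, to the closed subspaces and product spaces of it that are used later in the Nash--Moser argument.
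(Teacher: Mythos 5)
Your overall strategy is the right one --- decompose in frequency adapted to the variable $\langle\xi\rangle^{1/\theta}$, map into $\Sigma(L^2)$, and invoke Definition \ref{tamespace} --- but the decomposition you actually set up does not work, and you notice this yourself mid-argument without repairing it. With dyadic blocks where $\langle\xi\rangle^{1/\theta}\sim 2^k$, the weight $e^{\rho\langle\xi\rangle^{1/\theta}}$ becomes $e^{\rho 2^k}$ on the $k$-th block, which, as you observe, cannot be compared to the $\Sigma(B)$ weight $e^{nk}$ with any fixed index shift $r$; the subsequent ``reindex so that $e^{c2^k}$ becomes $e^{k'}$'' is exactly the missing content of the proof, and it is never carried out. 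The concrete fix, which is what the paper does, is to use blocks of \emph{unit width in the variable} $\langle\xi\rangle^{1/\theta}$ rather than dyadic ones: take $\chi_j$ supported where $j\le\langle\xi\rangle^{1/\theta}<j+1$, i.e.\ $j^\theta\le\langle\xi\rangle<(j+1)^\theta$. Then $e^{\rho\langle\xi\rangle^{1/\theta}}$ is comparable to $e^{\rho j}$ on the $j$-th block, the grading $|f|_k=\|e^{k\langle\cdot\rangle^{1/\theta}}\hat f\|_{L^2}$ matches the $\Sigma(L^2)$ seminorms up to a shift of one index, and both tame estimates follow by a one-line computation. (Your choice of grading with $m=\rho=n$ is harmless, since $\langle\xi\rangle^{n}$ is absorbed by a small increase of $\rho$, but it adds noise.)

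There is a second genuine defect: your $L_2$ is bare summation $L_2(\{v_k\})=\sum_k v_k$. This is not even well defined as a map from all of $\Sigma(L^2)$ into $H^\infty_\theta(\R^n)$: the sequence with $v_0$ an arbitrary $L^2$ function and $v_k=0$ for $k\ge1$ has finite $|\cdot|_n$ for every $n$, yet $\sum_k v_k=v_0$ need not lie in $H^\infty_\theta(\R^n)$. Definition \ref{tamespace} requires $L_2$ to be tame on the whole of $\Sigma(B)$, not merely on the image of $L_1$. The repair is to reinsert the frequency cutoffs, $L_2(\{v_k\})=\mathcal{F}^{-1}\bigl(\sum_k\chi_k\widehat{v_k}\bigr)$, which localizes each $v_k$ to the $k$-th block so that the weight $e^{n\langle\xi\rangle^{1/\theta}}\sim e^{nk}$ there is controlled by the $\Sigma(L^2)$ seminorm; the identity $L_2\circ L_1=\mathrm{id}$ still holds because $\chi_k^2=\chi_k$. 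With these two corrections your outline becomes the paper's proof.
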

\begin{proof} As standard, we shall denote here and throughout the paper the Fourier transform of a function (or a distribution) $u$ by $\hat{u}$ or by $\mathcal{F}
(u).$ First of all, it is easy to verify that $H^\infty_{\theta}(\R^{n})$ is a graded Fr\'echet space with the increasing family of seminorms
	$$ |f|_{k} := \|f\|_{H^0_{k,\theta}}=\| e^{k \langle \cdot \rangle^{1/\theta}}\hat{f}(\cdot)\|_{L^2}, \qquad k=1,2,3,\ldots.$$
	Consider now the space $\Sigma(L^2(\R^{n}))$ and the map $L_1:H^\infty_{\theta}(\R^{n}) \to \Sigma(L^2(\R^{n}))$ defined as $L_1(f)=\{f_j\}, j=1,2,3,\ldots,$ where $f_j = \mathcal{F}^{-1}(\chi_j \hat{f})$ and the functions $\chi_j$ are such that $\chi_j(\xi)=1$ if $j^\theta \leq \pxi < (j+1)^\theta$ and $\chi_j(\xi)=0$ otherwise. Then we have 
	\begin{eqnarray*}
		|\{f_j\}|_k^2 &=& \sum_{j=1}^\infty e^{2jk} \| \hat{f}_j\|^2_{L^2} 
		= \sum_{j=1}^\infty e^{2jk} \| \chi_j \hat{f}\|^2_{L^2} \\
		&=& \sum_{j=1}^\infty e^{2jk} \int_{\R^n}| \chi_j(\xi)e^{-\rho\pxi^{1/\theta}} e^{\rho\pxi^{1/\theta}}\hat{f}(\xi)|^2\, d\xi \\
		&\leq & \sum_{j=1}^\infty e^{2j(k-\rho)}  \| e^{\rho\langle \cdot \rangle^{1/\theta}}\hat{f}(\cdot)\|^2_{L^2} \leq C_{k,\rho} \| f\|^2_{H^0_{\theta, \rho}}
	\end{eqnarray*}
	for every $\rho >k$. In particular, for $\rho=k+1$ we obtain that $|\{f_j\}|_k \leq C'_k | f|_{k+1}$, hence $L_1$ is a tame linear map.
	Similarly, we define the map $L_2: \Sigma(L^2(\R^{n})) \to H^\infty_\theta(\R^{n})$ as 
	$$L_2(\{f_j\})= \mathcal{F}^{-1}\left( \sum_{j=1}^\infty \chi_j\hat{f_j}\right).$$
	We have 
	\begin{eqnarray*}
		|L_2 \{f_j\}|_k^2 &=& \left\| e^{k\langle \cdot \rangle^{1/\theta}}\sum_{j=1}^\infty \chi_j(\cdot)\hat{f_j}(\cdot)\right\|^2_{L^2} 
		= \int_{\R^n}  \sum_{j=1}^\infty  e^{2k\langle \xi \rangle^{1/\theta}}|\chi_j(\xi)\hat{f_j}(\xi)|^2\, d\xi  \\
		&\leq& \sum_{j=1}^\infty e^{2k(j+1)}\int_{\R^n}|  \chi_j(\xi)\hat{f_j}(\xi)|^2\, d\xi 
		\leq  \sum_{j=1}^\infty   e^{2k(j+1)} \| \hat{f_j}\|^2_{L^2} =e^{2k}  |\{f_j\}|_k^2.
	\end{eqnarray*}  
	Hence, also $L_2$ is a tame linear map. Moreover, it is easy to verify that $L_2 \circ L_1$ is the identity map on $H^\infty_\theta(\R^{n})$.
\end{proof}

	\begin{Def}
		Let $X,Y$ be two graded spaces, $U$ be an open subset of $X$. A map $T:U\to Y$ is said to be tame if for every $u \in U$ there exist a neighborhood $U'$ of $u$, $r \geq 0$ and $n_0 \in \N$ such that for every $n \geq n_0$ there exists a constant $C_n>0$ such that
		$$|T(u)|_n \leq C_n (1+|u|_{n+r})$$ for all $u \in U'.$ The map $T$ is said to be smooth tame if $T$ is $\mathcal{C}^\infty$ and its derivatives $D^n T:U \times X^n \to Y$ are tame for every $n \in \N.$
	\end{Def}
	
	Finally, we recall the statement of Nash-Moser inversion theorem, cf. \cite{H}.
	
	\begin{Th}\label{NM}
		(Nash-Moser) Let $X,Y$ be tame Fr\'echet spaces, $U$ be an open subset of $X$ and let $T:U \to Y$ be a smooth tame map. If for every fixed $u \in U, h\in Y$ the equation $DT(u)v=h$ has a unique solution $v=S(u,h)$ and if the map $S:U\times Y \to X$ is smooth tame, then $T$ is locally invertible at any point and each local inverse is smooth tame.
	\end{Th}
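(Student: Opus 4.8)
The plan is to prove the theorem through Hamilton's formulation of the Nash--Moser scheme, combining \emph{smoothing operators} with a \emph{modified Newton iteration} whose rapid (quadratic) convergence absorbs the finite loss of derivatives encoded in the tame estimates. First I would reduce to local surjectivity with a tame inverse: fix $u_0 \in U$ and, after translating, aim to solve $T(u)=v$ for $v$ close to $w_0 := T(u_0)$, producing $u = T^{-1}(v)$; injectivity and the tame bound on $T^{-1}$ then follow from the same quantitative estimates. Because $X$ and $Y$ are tame, they admit (via the maps $L_1,L_2$ of Definition \ref{tamespace} together with the standard smoothing on $\Sigma(B)$) families of smoothing operators $\{S_t\}_{t\ge 1}$ satisfying, for indices $n\ge m$ above a fixed base, the two complementary estimates $|S_t x|_n \le C\,t^{\,n-m}|x|_m$ and $|(I-S_t)x|_m \le C\,t^{-(n-m)}|x|_n$. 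These operators are precisely the device that lets one trade regularity for decay and back again.

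The heart of the argument is the iteration itself. Writing $e_n := v - T(u_n)$ for the residual at step $n$, a bare Newton step would set $u_{n+1}=u_n+S(u_n,e_n)$; but each solve through $S$ costs a fixed number $r$ of derivatives, so the naive scheme cannot close in a Fr\'echet space. I would therefore insert smoothing and define $u_{n+1}=u_n+S(u_n,S_{t_n}e_n)$ along a superexponentially increasing sequence of parameters, e.g.\ $t_n = t_0^{\,\kappa^n}$ for a suitable $\kappa>1$ (say $\kappa=3/2$). Smooth tameness of $T$ furnishes a Taylor expansion with quadratic remainder, $T(u_{n+1})=T(u_n)+DT(u_n)(u_{n+1}-u_n)+Q_n$, where $Q_n$ is controlled by a low-norm of $u_{n+1}-u_n$ squared times a high norm. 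Since $DT(u_n)(u_{n+1}-u_n)=S_{t_n}e_n$ by the defining property of $S$, the new residual is $e_{n+1}=(I-S_{t_n})e_n-Q_n$. The approximation estimate for $I-S_{t_n}$ forces the first term to be small in low norms, while the quadratic structure of $Q_n$ handles the second, and one propagates an inductive hypothesis of the form $|u_n-u_0|_{a}\le\delta$ together with a geometric decay $|e_n|_{b}\le\varepsilon\,t_n^{-\lambda}$ for carefully chosen indices $a,b,\lambda$.

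The main obstacle, and the genuine technical core of the theorem, is the bookkeeping that makes this induction close: one must choose the auxiliary smoothing exponents, the loss parameter $r$, and the growth rate $\kappa$ so that the gain coming from the quadratic remainder and from $(I-S_{t_n})$ strictly beats the loss from inverting $DT$, simultaneously in every relevant seminorm. The delicate point is that the high-norm size of the iterates is allowed to grow (like a power of $t_n$) while the low-norm residual decays geometrically in $t_n$; interpolating between these two regimes through the tame estimates is what yields convergence of $\sum_n |u_{n+1}-u_n|_k$ in each seminorm $|\cdot|_k$, hence a limit $u\in X$ with $T(u)=v$ and the tame bound $|u-u_0|_k\le C_k(1+|v-w_0|_{k+r})$. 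Finally, smooth tame dependence of the local inverse is obtained by differentiating the relation $T(T^{-1}(v))=v$: its first derivative gives $DT(T^{-1}(v))\,D(T^{-1})(v)=\mathrm{Id}$, so $D(T^{-1})(v)=S(T^{-1}(v),\cdot)$, and since $S$ is smooth tame and $T^{-1}$ is tame, an induction on the order of differentiation -- repeatedly applying the chain rule and the estimates already established -- shows that all derivatives of $T^{-1}$ are tame, which completes the proof.
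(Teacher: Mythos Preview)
The paper does not prove this theorem at all: it is simply recalled from Hamilton \cite{H} and used as a black box in Section~\ref{finalsection}. Your proposal is therefore not comparable to any argument in the paper, but it is a faithful outline of Hamilton's original proof (smoothing operators from the tame structure, a smoothed Newton iteration $u_{n+1}=u_n+S(u_n,S_{t_n}e_n)$ with superexponential $t_n$, and the bookkeeping that trades high-norm growth against low-norm quadratic decay), so in spirit it matches the cited source.

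One caveat on your sketch: the induction you describe is correct in broad strokes, but you should be aware that closing it requires more than just ``carefully chosen indices $a,b,\lambda$''. In Hamilton's proof one actually needs an interpolation inequality $|x|_m \le C\,|x|_{m_0}^{1-\theta}|x|_{m_1}^{\theta}$ (valid in tame spaces via the smoothing operators) to control the intermediate norms, and the constants in the tame estimates for $T$, $D^2T$, and $S$ must all be tracked uniformly on a fixed ball. Without explicitly invoking interpolation, the passage from ``low-norm residual decays, high-norm iterates grow polynomially'' to ``every seminorm converges'' does not quite follow. If you intend this as a genuine proof rather than a roadmap, that step deserves to be spelled out.
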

	\subsection{Pseudodifferential operators }
	\label{pseudos} 
	In this subsection we introduce the pseudodifferential operators of infinite order which will be used to prove the well-posedness for the linearized Cauchy problem associated to \eqref{CP}. Although the arguments in the next sections concern one space dimensional problems, it is appropriate to introduce these operators in arbitrary dimension for future applications.
		
	Fixed $ \mu \geq 1$, $A>0$ and $m,m_1,m_2 \in \R$ we will consider the following Banach spaces: 
		$$p(x,\xi) \in S^{m}_{\mu}(\R^{2n};A) \iff 
	\sup_{\overset{\alpha, \beta \in \N_{0}^{n}}{x,\xi \in \R^{n}}} |\partial_\xi^\alpha \partial_x^\beta p(x,\xi)| A^{-|\alpha+\beta|} (\alpha! \beta!)^{-\mu} \langle \xi \rangle^{-m+|\alpha|} < +\infty,
	$$
	$$
	p(x,\xi) \in \tilde{S}^{m}_{\mu}(\R^{2n};A) \iff |p|_{A} := \sup_{\overset{\alpha, \beta \in \N_{0}^{n}}{x,\xi \in \R^{n}}} |\partial_\xi^\alpha \partial_x^\beta p(x,\xi)| A^{-|\alpha+\beta|} (\alpha! \beta!)^{-\mu} \langle \xi \rangle^{-m} < +\infty,
	$$
	$$ p \in \textrm{\textbf{SG}}^{m_1,m_2}_{\mu}(\R^{2n}; A) \Leftrightarrow 
	\sup_{\stackrel{\alpha, \beta \in \N^{n}_{0}}{x,\xi \in \R^{n}} }| \partial_\xi^\alpha \partial_x^\beta p(x,\xi) |A^{-|\alpha+\beta|} (\alpha!\beta!)^{-\mu} \pxi^{-m_1+|\alpha|} \px^{-m_2+|\beta|}  <+\infty.
	$$ 
We set  
	\begin{equation}\label{projsymb1}S^m_{\mu}(\R^{2n}):= \bigcup_{A>0}S^{m}_{\mu}(\R^{2n};A), \qquad \tilde{S}^m_{\mu}(\R^{2n}):= \bigcup_{A>0}\tilde{S}^{m}_{\mu}(\R^{2n};A),\end{equation}
	\begin{equation}\label{projsymb2} \textbf{\textrm{SG}}^{m_1,m_2}_{\mu}(\R^{2n}):= \bigcup_{A>0}\textrm{\bf SG}^{m_1,m_2}_{\mu}(\R^{2n};A) \end{equation} endowed with the inductive limit topology
	and 
	\begin{equation}\label{projsymb1}\Gamma^m_{\mu}(\R^{2n}):= \bigcap_{A>0}S^{m}_{\mu}(\R^{2n};A), \qquad \tilde{\Gamma}^m_{\mu}(\R^{2n}):= \bigcap_{A>0}\tilde{S}^{m}_{\mu}(\R^{2n};A),\end{equation}
	\begin{equation}\label{projsymb2}\Gamma G^{m_1,m_2}_{\mu}(\R^{2n}):= \bigcap_{A>0}\textrm{\bf SG}^{m_1,m_2}_{\mu}(\R^{2n};A) \end{equation}
	endowed with the projective limit topology. 

	\begin{Rem}
		We observe that if $\mu< \theta$, then obviously for every $A>0$ we have $S^m_{\mu}(\R^{2n};A) \subset \Gamma^m_\theta (\R^{2n})$, $\tilde{S}^m_{\mu}(\R^{2n};A) \subset \tilde{\Gamma}^m_\theta (\R^{2n})$ and $\textrm{\bf SG}^{m_1,m_2}_{\mu}(\R^{2n};A) \subset \Gamma G^{m_1,m_2}_{\theta}(\R^{2n})$.
	\end{Rem}
	
	Taking into account the latter remark, in the sequel we shall consider symbols satisfying the estimates above for a fixed constant $A>0$ as subsets of some projective symbol classes with a lower Gevrey regularity as in \eqref{projsymb1}, \eqref{projsymb2}. For this reason we shall state the next results only for this type of classes.
	
	For a given symbol $p \in \tilde{\Gamma}^{m}_{\theta}(\R^{2n})$ we denote by $p(x,D)$ or by $\textrm{op}(p)$ the pseudodifferential operator defined by 
	\begin{equation} \label{pseudop}
		p(x,D) u (x) = \int e^{i\xi x} p(x,\xi) \widehat{u}(\xi) \dslash\xi, \quad u \in \gamma_0^\theta(\R^{n}),
	\end{equation}
	where $ \dslash\xi = (2\pi)^{-n}d\xi.$
	Arguing as in \cite[Theorem 3.2.3]{Rodino_linear_partial_differential_operators_in_gevrey_spaces} or \cite[Theorem 2.4]{Zanghirati} it is easy to verify that operators of the form \eqref{pseudop} with symbols from $\tilde{\Gamma}_{\theta}(\R^{2n})$ map continuously $\gamma_0^\theta(\R^n)$ into $\gamma^\theta(\R^n)$. Moreover, from the classical theory of pseudodifferential operators, they extend to linear and continuous operators from $H^{m'}(\R^n)$ to $H^{m'-m}(\R^n)$. For our purposes, it is also important to state the action of these operators on the Gevrey-Sobolev spaces defined in the Introduction. The following result is a direct consequence of \cite[Proposition 6.3]{KN} applied to symbols from $\tilde{\Gamma}_\theta^m (\R^{2n})$.
	
	\begin{Prop}\label{prop_continuity_finite_order_gevrey_sobolev}
		Let $p \in \tilde{\Gamma}^{m}_{\theta}(\R^{2n})$. Then the operator $p(x,D)$ 
		maps continuously $H^{m'}_{\rho;\theta}(\R^n)$ into $H^{m'-m}_{\rho;\theta}(\R^n)$ for every $m', \rho \in \R$.
	\end{Prop}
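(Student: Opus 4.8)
The statement to prove is Proposition \ref{prop_continuity_finite_order_gevrey_sobolev}: that a pseudodifferential operator $p(x,D)$ with symbol $p \in \tilde{\Gamma}^m_\theta(\R^{2n})$ maps $H^{m'}_{\rho;\theta}(\R^n)$ continuously into $H^{m'-m}_{\rho;\theta}(\R^n)$.

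\textbf{Approach.} The plan is to conjugate $p(x,D)$ by the Fourier multipliers defining the Gevrey–Sobolev norms and show that the conjugated operator is a pseudodifferential operator of the same finite order $m$, then invoke the classical $L^2$/Sobolev continuity. Concretely, mapping $H^{m'}_{\rho;\theta} \to H^{m'-m}_{\rho;\theta}$ for $p(x,D)$ is equivalent to the $L^2$-boundedness of
$$
Q := \langle D\rangle^{m'-m} e^{\rho\langle D\rangle^{1/\theta}}\, p(x,D)\, e^{-\rho\langle D\rangle^{1/\theta}} \langle D\rangle^{-m'}.
$$
First I would recall (this is exactly the content invoked from \cite[Proposition 6.3]{KN}) that for $p \in \tilde{\Gamma}^m_\theta(\R^{2n})$ the operator $e^{\rho\langle D\rangle^{1/\theta}} p(x,D) e^{-\rho\langle D\rangle^{1/\theta}}$ is again a pseudodifferential operator with symbol in $\tilde{\Gamma}^m_\theta(\R^{2n})$ — here the condition $\theta>1$ is what makes $e^{\pm\rho\langle\xi\rangle^{1/\theta}}$ behave like an admissible (sub-exponential) weight and the $\mu$-Gevrey symbol calculus applies, the key point being that differentiating the weight produces factors $\langle\xi\rangle^{1/\theta-1}$ which are harmless. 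Then $\langle D\rangle^{m'-m}(\cdots)\langle D\rangle^{-m'}$ composes this with two Fourier multipliers of order $m'-m$ and $-m'$, yielding an operator with symbol in $\tilde{\Gamma}^0_\theta(\R^{2n}) \subset S^0_{1,0}$, hence $L^2$-bounded by Calderón–Vaillancourt. This gives the claimed continuity, and the operator norm bound gives continuity of the map.

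\textbf{Key steps, in order.} (1) Reduce the statement to $L^2$-boundedness of the conjugated operator $Q$ above. (2) State the conjugation lemma: for $a \in \tilde{\Gamma}^m_\theta(\R^{2n})$, the operator $e^{\rho\langle D\rangle^{1/\theta}} a(x,D) e^{-\rho\langle D\rangle^{1/\theta}}$ has symbol in $\tilde{\Gamma}^m_\theta(\R^{2n})$, with an asymptotic expansion whose principal term is $a(x,\xi)$ times corrections involving $\nabla_\xi(\rho\langle\xi\rangle^{1/\theta})$; cite \cite[Proposition 6.3]{KN}. (3) Absorb $\langle D\rangle^{m'-m}$ on the left and $\langle D\rangle^{-m'}$ on the right using the symbolic calculus for finite-order $\theta$-symbols, landing in $\tilde{\Gamma}^0_\theta(\R^{2n})$. (4) Apply the classical boundedness of order-zero pseudodifferential operators on $L^2$ to conclude $\|Qu\|_{L^2} \le C\|u\|_{L^2}$, equivalently $\|p(x,D)u\|_{H^{m'-m}_{\rho;\theta}} \le C\|u\|_{H^{m'}_{\rho;\theta}}$.

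\textbf{Main obstacle.} Since the paper explicitly says this is "a direct consequence of \cite[Proposition 6.3]{KN}", the honest plan is that almost all the work is already packaged in that cited result; the only thing to verify here is that a fixed-$A$ symbol $p \in \tilde{S}^m_\mu(\R^{2n};A)$ with $\mu<\theta$ sits inside $\tilde{\Gamma}^m_\theta(\R^{2n})$ (noted in the Remark above) so that \cite[Proposition 6.3]{KN} is applicable, and then to track that the conjugation and the two multiplier compositions do not raise the order. The one genuinely delicate point, were one to prove the conjugation lemma from scratch, is controlling the remainder in the symbolic expansion of $e^{\rho\langle D\rangle^{1/\theta}} p(x,D) e^{-\rho\langle D\rangle^{1/\theta}}$: one must check that the oscillatory integral defining the conjugated symbol converges and that its $x,\xi$-derivatives still obey $\tilde\Gamma^m_\theta$ estimates uniformly, which is exactly where $\theta>1$ (equivalently $1/\theta<1$) is used so that the weight's logarithmic derivative $\langle\xi\rangle^{1/\theta-1}$ decays. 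Given the reference, however, it suffices to cite it and record the bookkeeping of orders.

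\begin{proof}
By definition of the spaces $H^{m'}_{\rho;\theta}(\R^n)$ and $H^{m'-m}_{\rho;\theta}(\R^n)$, the asserted continuity of $p(x,D)\colon H^{m'}_{\rho;\theta}(\R^n)\to H^{m'-m}_{\rho;\theta}(\R^n)$ is equivalent to the $L^2(\R^n)$-boundedness of the conjugated operator
$$
Q:=\langle D\rangle^{m'-m}\,e^{\rho\langle D\rangle^{1/\theta}}\,p(x,D)\,e^{-\rho\langle D\rangle^{1/\theta}}\,\langle D\rangle^{-m'}.
$$
By the Remark following \eqref{projsymb2}, the hypothesis $p\in\tilde{\Gamma}^m_\theta(\R^{2n})$ (or even $p\in\tilde S^m_\mu(\R^{2n};A)$ for some $A>0$ and $\mu<\theta$) places $p$ in a class to which \cite[Proposition 6.3]{KN} applies. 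That proposition yields that $e^{\rho\langle D\rangle^{1/\theta}}\,p(x,D)\,e^{-\rho\langle D\rangle^{1/\theta}}$ is again a pseudodifferential operator with symbol in $\tilde{\Gamma}^m_\theta(\R^{2n})$: the sub-exponential weight $e^{\pm\rho\langle\xi\rangle^{1/\theta}}$ is admissible precisely because $\theta>1$, so that each $\xi$-derivative of the weight produces a harmless factor $O(\langle\xi\rangle^{1/\theta-1})$ and the $\theta$-Gevrey symbolic calculus closes. Composing on the left with the Fourier multiplier $\langle D\rangle^{m'-m}$ and on the right with $\langle D\rangle^{-m'}$, both of which have symbols in $\tilde{\Gamma}^{m'-m}_\theta(\R^{2n})$ and $\tilde{\Gamma}^{-m'}_\theta(\R^{2n})$ respectively, and using the composition rules for these finite-order classes, we conclude that $Q=q(x,D)$ with $q\in\tilde{\Gamma}^0_\theta(\R^{2n})\subset S^0_{1,0}(\R^{2n})$. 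By the Calderón--Vaillancourt theorem, $Q$ is bounded on $L^2(\R^n)$, hence there is $C=C(p,m,m',\rho,\theta)>0$ with
$$
\|p(x,D)u\|_{H^{m'-m}_{\rho;\theta}}=\|Qv\|_{L^2}\leq C\,\|v\|_{L^2}=C\,\|u\|_{H^{m'}_{\rho;\theta}},
$$
where $v:=\langle D\rangle^{m'}e^{\rho\langle D\rangle^{1/\theta}}u$. This proves the continuity of $p(x,D)$ from $H^{m'}_{\rho;\theta}(\R^n)$ to $H^{m'-m}_{\rho;\theta}(\R^n)$ for every $m',\rho\in\R$.
\end{proof}
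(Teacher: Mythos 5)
Your proof is correct and matches the paper's treatment: the paper gives no argument at all, deriving the proposition as a direct consequence of \cite[Proposition 6.3]{KN}, and your reduction to the $L^2$-boundedness of the conjugated operator $\langle D\rangle^{m'-m}e^{\rho\langle D\rangle^{1/\theta}}p(x,D)e^{-\rho\langle D\rangle^{1/\theta}}\langle D\rangle^{-m'}$ is precisely the standard argument packaged in that citation. One small correction: since symbols in $\tilde{\Gamma}^0_\theta(\R^{2n})$ gain no power of $\langle\xi\rangle$ under $\xi$-differentiation, the conjugated symbol $q$ lies in $S^0_{0,0}$ rather than $S^0_{1,0}$; this is harmless, because the Calder\'on--Vaillancourt theorem you invoke is exactly the $L^2$-boundedness result for that class.
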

	
	By \cite[Proposition 6.4]{KN}, given $p \in \Gamma^{m}_{\theta}(\R^{2n})$ and $q \in \Gamma^{m'}_{\theta}(\R^{2n})$, the operator $p(x,D)q(x,D)$ is a pseudodifferential operator with symbol $s$ given for every $N \geq 1$ by
	$$s(x,\xi)= \sum_{|\alpha| <N}(\alpha!)^{-1}\partial_\xi^\alpha p(x,\xi) D_x^\alpha q(x,\xi) +r_N(x,\xi),$$
	where $r_N \in \Gamma_\theta^{m+m'-N}(\R^{2n}).$

	In the following we shall consider also particular symbols of infinite order, that is growing exponentially at infinity. Such operators are frequently used in the analysis of evolution equations in the Gevrey setting, see for instance \cite{AAC3evolGelfand-Shilov, AACgev, AscanelliCappielloHolder, ascanellicappielloJMPA, CM,  CRe1, KB, KN, Zanghirati}. In particular, in this paper they will be employed to define the change of variables which allows to treat the linearized problem associated to \eqref{CP}.
	We shall not develop a complete calculus for pseudodifferential operators of infinite order here since for our purposes we can limit ourselves to considering some particular examples of such operators, namely defined by a symbol of the form $e^{\Lambda(x,\xi)}$ for some $\Lambda \in S^{1/\kappa}_{\mu}(\R^{2n};A),$ where $\kappa> 1$ and $\Lambda$ is real-valued.
	It is easy to verify that $e^{\pm \Lambda}$ satisfies an estimate of the form
	\begin{equation}\label{expest}
		|\partial_\xi^\alpha \partial_x^\beta e^{\pm \Lambda(x,\xi)}| \leq  A_1^{|\alpha+\beta|}\pxi^{-|\alpha|}(\alpha!\beta!)^\mu e^{2\rho_0\langle \xi \rangle^{\frac1{\kappa}}}
	\end{equation}
	for some positive constant $A_1$ independent of $\alpha, \beta$, where 
	$$
	\rho_0:= \sup_{(\alpha, \beta) \in \N_{0}^{2n}} \sup\limits_{(x,\xi) \in \R^{2n}} A^{-|\alpha+\beta|} (\alpha!\beta!)^{-\mu} \langle \xi \rangle^{-1/\kappa + |\alpha|}|\partial^{\alpha}_{\xi}\partial^{\beta}_{x} \Lambda(x,\xi)|,
	$$ 
	see \cite[Lemma 6.2]{KN}. The estimate \eqref{expest} guarantees that the related pseudodifferential operator
	$$e^{\pm \Lambda}(x,D) u(x) = \int_{\R^n} e^{i\xi x \pm\Lambda(x,\xi)}\hat{u}(\xi)\, \dslash \xi$$ 
	is well defined and continuous as an operator from $\gamma_0^\theta(\R^n)$ to $\gamma^\theta(\R^n)$ for every $\theta \in (\mu, \kappa)$. 
	We shall also consider the so-called reverse operator of $e^{\pm \Lambda}(x,D)$, denoted by $^{R}\{e^{\pm \Lambda}(x,D)\}$. This operator, introduced in \cite[Proposition 2.13]{KW} as the transposed of $e^{\pm \Lambda}(x,-D)$,  is defined as an oscillatory integral by
	$$^{R}\{e^{\pm \Lambda}(x,D)\}u(x) = Os - \iint e^{i\xi (x-y) \pm \Lambda(y,\xi)}u(y)\, dy \dslash \xi.
	$$
	The following continuity result holds for the operators $e^\Lambda(x,D)$ and $^R\{e^\Lambda(x,D)\}$.

	\begin{Prop}\label{contgev}
		Let $\Lambda \in \tilde{S}_\mu^{1/\kappa}(\R^{2n};A)$ for some $A>0$ and $\kappa, \mu \in \R$ such that $1<\mu < \kappa$ and let  $\rho,m\in\R$ and $\theta  \in (\mu,\kappa)$. Then the operators $e^{\Lambda}(x,D)$ and $^R\{e^{\Lambda}(x,D)\}$ map continuously $H^{m}_{\rho; \theta} (\R^n)$ into $H^{m}_{\rho-\delta; \theta}(\R^n)$ for every $\delta >0$.	
	\end{Prop}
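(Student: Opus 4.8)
The plan is to factor the exponential growth of the symbol $e^{\Lambda}$ into a Fourier multiplier which merely shifts the weight in the scale $\{H^m_{\cdot;\theta}\}$, thereby reducing the claim to the finite order continuity statement already at our disposal, Proposition \ref{prop_continuity_finite_order_gevrey_sobolev}. Fix $\delta>0$ and put $q_\delta(x,\xi):=e^{\Lambda(x,\xi)}e^{-\delta\pxi^{1/\theta}}$. Since composing a pseudodifferential operator with a Fourier multiplier on the right just multiplies the two symbols, one has the exact identity $e^{\Lambda}(x,D)=q_\delta(x,D)\circ e^{\delta\pd^{1/\theta}}$ on the dense subspace $H^\infty_\theta(\R^n)\subset H^m_{\rho;\theta}(\R^n)$, on which all three operators are well defined. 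Moreover, directly from the definition of the norms, $e^{\delta\pd^{1/\theta}}$ is an isometric isomorphism of $H^m_{\rho;\theta}(\R^n)$ onto $H^m_{\rho-\delta;\theta}(\R^n)$ (because $\pxi^{m}e^{(\rho-\delta)\pxi^{1/\theta}}e^{\delta\pxi^{1/\theta}}=\pxi^{m}e^{\rho\pxi^{1/\theta}}$). Hence it suffices to show that $q_\delta(x,D)$ is bounded on $H^m_{\rho-\delta;\theta}(\R^n)$, that is, by Proposition \ref{prop_continuity_finite_order_gevrey_sobolev}, that $q_\delta\in\tilde\Gamma^0_\theta(\R^{2n})$.

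The core of the argument is thus this membership, and it is here that the assumptions $1<\mu<\theta<\kappa$ and $\delta>0$ are used. By \eqref{expest} (discarding the favorable factor $\pxi^{-|\alpha|}$) one has $|\partial_\xi^\alpha\partial_x^\beta e^{\Lambda(x,\xi)}|\leq A_1^{|\alpha+\beta|}(\alpha!\beta!)^\mu e^{2\rho_0\pxi^{1/\kappa}}$, while a Faà di Bruno computation — using that $\pxi^{1/\theta}$ is an analytic symbol of order $1/\theta<1$ — gives $|\partial_\xi^\gamma e^{-\delta\pxi^{1/\theta}}|\leq C_\delta^{|\gamma|+1}\gamma!\,e^{-\delta\pxi^{1/\theta}}$. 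Since $1/\kappa<1/\theta$, the factor $e^{2\rho_0\pxi^{1/\kappa}-\delta\pxi^{1/\theta}}$ is bounded (indeed it decays faster than any power of $\pxi$). Applying the Leibniz rule, absorbing this bounded factor, and using the elementary inequality $\binom{\alpha}{\alpha'}(\alpha'!\,(\alpha-\alpha')!)^\mu\leq(\alpha!)^\mu$ (valid because $\mu>1$), I would obtain $|\partial_\xi^\alpha\partial_x^\beta q_\delta|\leq C^{|\alpha+\beta|+1}(\alpha!\beta!)^\mu$ for a suitable $C>0$, i.e. $q_\delta\in\tilde S^0_\mu(\R^{2n};C)$. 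Since $\mu<\theta$, the Remark in Subsection \ref{pseudos} yields $\tilde S^0_\mu(\R^{2n};C)\subset\tilde\Gamma^0_\theta(\R^{2n})$, so $q_\delta\in\tilde\Gamma^0_\theta(\R^{2n})$. Combining this with the two observations above proves the statement for $e^{\Lambda}(x,D)$, and since $\delta>0$ was arbitrary we are done with the first operator.

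For the reverse operator I would run the same scheme at the amplitude level. Recall that ${}^{R}\{e^{\Lambda}(x,D)\}$ is the amplitude operator with amplitude $e^{\Lambda(y,\xi)}$; since pre-composing a Fourier multiplier to the left of an amplitude operator multiplies the amplitude by its symbol, ${}^{R}\{e^{\Lambda}(x,D)\}=e^{\delta\pd^{1/\theta}}\circ{}^{R}\{q_\delta(x,D)\}$. As $q_\delta\in\tilde\Gamma^0_\theta(\R^{2n})$ and, by the estimates above, even decays faster than any power of $\pxi$, the reverse operator ${}^{R}\{q_\delta(x,D)\}$ reduces — via the standard reduction of amplitude operators to left-symbol operators in the Gevrey calculus (cf. the references cited in Subsection \ref{pseudos}) — to a pseudodifferential operator with symbol in $\tilde\Gamma^0_\theta(\R^{2n})$, hence bounded on $H^m_{\rho;\theta}(\R^n)$ by Proposition \ref{prop_continuity_finite_order_gevrey_sobolev}. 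Composing with the isometry $e^{\delta\pd^{1/\theta}}:H^m_{\rho;\theta}(\R^n)\to H^m_{\rho-\delta;\theta}(\R^n)$ yields the claim.

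The step I expect to demand the most care is the membership $q_\delta\in\tilde\Gamma^0_\theta(\R^{2n})$: this is precisely where multiplication by $e^{-\delta\pxi^{1/\theta}}$ turns the infinite order symbol $e^{\Lambda}$ into a finite order one lying in the \emph{projective} class, and it is the reason for the $\delta$-loss in the weight. The inequality $1/\kappa<1/\theta$ is what makes the subexponential growth $e^{2\rho_0\pxi^{1/\kappa}}$ absorbable, and $\mu<\theta$ is what upgrades membership in a fixed-constant class to the projective one; the only extra ingredient for the reverse operator, the amplitude-to-symbol reduction, is routine.
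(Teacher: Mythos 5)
Your proposal is correct and takes essentially the same route as the paper: you factor $e^{\Lambda}(x,D)=\mathrm{op}\bigl(e^{\Lambda(x,\xi)-\delta\langle\xi\rangle^{1/\theta}}\bigr)\circ e^{\delta\langle D\rangle^{1/\theta}}$, verify that the first factor belongs to $\tilde\Gamma^{0}_{\theta}(\R^{2n})$ (the step the paper dismisses as easy, which you justify correctly via \eqref{expest}, Fa\`a di Bruno and $1/\kappa<1/\theta$, $\mu<\theta$), and conclude by Proposition \ref{prop_continuity_finite_order_gevrey_sobolev} together with the isometry $e^{\delta\langle D\rangle^{1/\theta}}:H^{m}_{\rho;\theta}\to H^{m}_{\rho-\delta;\theta}$. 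Your handling of $^R\{e^{\Lambda}(x,D)\}$ through the amplitude $e^{\Lambda(y,\xi)-\delta\langle\xi\rangle^{1/\theta}}$ and the standard amplitude-to-symbol reduction is exactly the ``similar arguments'' the paper invokes for the reverse operator.
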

	\begin{proof}
		We observe that $e^{\Lambda}(x,D) = a(x,D) e^{\delta \langle D \rangle^{\frac{1}{\theta}}}$ for every $\delta > 0$, where $a(x,\xi) = e^{\Lambda(x,\xi)-\delta\langle \xi \rangle^{\frac{1}{\theta}}} $. Since $\mu < \theta < \kappa$ we easily obtain $a \in \tilde{\Gamma}^{0}_{\theta}(\R^{2n})$. So we get from Proposition \ref{prop_continuity_finite_order_gevrey_sobolev} that $e^{\Lambda}: H^{m}_{\rho; \theta} (\R^n) \to H^{m}_{\rho-\delta; \theta}(\R^n)$ continuously for every $m, \rho \in \R$. The continuity of $^R\{e^{\Lambda}(x,D)\}$ follows by similar arguments. 
	\end{proof}

	In the next result we shall need to work with the weight function $\langle\xi\rangle_h= (h^2+|\xi|^2)^{1/2}$
	where $h \geq 1$. We point out that we can replace $\langle \xi \rangle$ by $\langle \xi \rangle_{h}$ in all the previous definitions and statements, and this replacement does not change the dependence of the constants, that is, all the previous constants are independent of $h$. Moreover, we also need the following stronger hypothesis on $\Lambda(x,\xi):$
	\begin{equation}\label{equation_stronger_hypothesis_on_Lambda}
		|\partial_\xi^\alpha \partial_x^\beta \Lambda(x,\xi)| \leq C_{\Lambda}^{|\alpha+\beta|+1} \alpha!^{\mu}\beta!^{\mu} \langle \xi \rangle^{-|\alpha|}_{h}, 
	\end{equation}
	whenever $|\beta| \geq 1$. This means that $\partial_\xi^\alpha \partial_x^\beta \Lambda$ behaves like a symbol of order $0$ if $\beta \neq 0$. We will show in the next Section that this condition will be fulfilled by the symbol $\tilde{\Lambda}$ appearing in the change of variable.
	
	\begin{Th}\label{conjthmnew}
		Let $p $ be a symbol in $\Gamma^m_\theta (\R^{2n})$ and let $\Lambda$ satisfy, for some $C_{\Lambda}>0$ and $\mu < \theta < \kappa$:
		\begin{equation}\label{firstasslambda}
			|\partial_\xi^\alpha \Lambda(x,\xi)| \leq C_{\Lambda}^{|\alpha|+1} \alpha!^{\mu} \langle \xi \rangle^{\frac{1}{\kappa}-|\alpha|}_{h}
		\end{equation}
		and \eqref{equation_stronger_hypothesis_on_Lambda} for $\beta \neq 0$. 
		Then there exists $h_0 = h_0(C_{\Lambda}) \geq 1$ such that if $h \geq h_0$, then 
		\begin{multline*}
			e^\Lambda(x,D) p(x,D) ^R\{e^{-\Lambda}(x,D)\} = p(x,D)+ \textrm{op} \left( \sum_{1 \leq |\alpha+\beta| < N} \frac{1}{\alpha!\beta!} \partial^{\alpha}_{\xi} \{\partial^{\beta}_{\xi} e^{\Lambda(x,\xi)} D^{\beta}_{x}p(x,\xi) D^{\alpha}_{x}e^{-\Lambda(x,\xi)} \} \right)  \\ + r_{N}(x,D) + r_{\infty}(x,D),
		\end{multline*}
		where $r_N$ and $r_\infty$ satisfy the following conditions: there exists $c'= c'(\Lambda) > 0$ and for every $A>0$ there exists $C_A > 0$ such that 
		\begin{equation} \label{stimaperilresto}
			|\partial^{\alpha}_{\xi}\partial^{\beta}_{x}r_N(x,\xi)| \leq C_A A^{|\alpha+\beta|+2N}\alpha!^{\theta}\beta!^{\theta}N!^{2\theta-1} \langle \xi \rangle_h^{m-(1-\frac{1}{\kappa})N - |\alpha|},
		\end{equation}
		\begin{equation} \label{stimaperilresto2}
			|\partial^{\alpha}_{\xi}\partial^{\beta}_{x}r_{\infty}(x,\xi)| \leq C_{A} A^{|\alpha+\beta|+2N}\alpha!^{\theta}\beta!^{\theta}N!^{2\theta-1} e^{-c' \langle \xi \rangle_h^{\frac{1}{\theta}}}.
		\end{equation}
	\end{Th}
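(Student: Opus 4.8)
The strategy is the standard symbolic-calculus computation for conjugation of a pseudodifferential operator by exponentials of a first-order symbol, combined with careful tracking of the Gevrey-type constants and of the dependence on the large parameter $h$. First I would compute the symbol of the composition $e^\Lambda(x,D)\,p(x,D)\,^R\{e^{-\Lambda}(x,D)\}$ by writing it as an iterated oscillatory integral. Using the formula for the symbol of a product (and the definition of the reverse operator as the transpose of $e^{-\Lambda}(x,-D)$), the full symbol is, at least formally,
$$
\sigma(x,\xi) = Os-\iint e^{-iy\eta}\, e^{\Lambda(x,\xi+\eta)}\, p(x+y,\xi+\eta)\, e^{-\Lambda(x+y,\xi)}\, dy\,\dslash\eta .
$$
Then I would Taylor-expand $e^{\Lambda(x,\xi+\eta)}$ and $p(x+y,\xi+\eta)e^{-\Lambda(x+y,\xi)}$ in the appropriate variables up to order $N$, producing the finite sum $\sum_{1\le|\alpha+\beta|<N}\frac{1}{\alpha!\beta!}\partial_\xi^\alpha\{\partial_\xi^\beta e^{\Lambda}D_x^\beta p\,D_x^\alpha e^{-\Lambda}\}$ together with the leading term $p(x,\xi)$ (the $\alpha=\beta=0$ contribution, since $e^\Lambda e^{-\Lambda}=1$), plus an integral remainder.

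The bulk of the work is estimating the two remainders. The term $r_N$ comes from the Taylor remainder of finite order: here one uses the hypothesis \eqref{firstasslambda} on the $\xi$-derivatives of $\Lambda$ — which gives that $\partial_\xi^\alpha e^{\pm\Lambda}$ gains a factor $\langle\xi\rangle_h^{-|\alpha|}$ times the exponential weight — together with the order-zero behaviour \eqref{equation_stronger_hypothesis_on_Lambda} of $\partial_x^\beta\Lambda$ for $\beta\neq 0$, and the Faà di Bruno formula to differentiate the exponentials. Each order of Taylor expansion in the product costs a power of $\langle\xi\rangle_h$ but the factor $e^{\Lambda(x,\xi+\eta)}e^{-\Lambda(x,\xi)}$ must first be controlled: expanding this difference produces, after one derivative in $\xi$ hits $\Lambda$, a gain $\langle\xi\rangle_h^{1/\kappa-1}$, which is the source of the exponent $(1-\frac1\kappa)N$ in \eqref{stimaperilresto}. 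The factor $N!^{2\theta-1}$ arises because each Taylor step contributes roughly $N!^{\theta}$ from the symbol estimates of $p$ and of $e^{\pm\Lambda}$ (with $\mu<\theta$), while one $N!$ is absorbed by the $\frac{1}{\alpha!\beta!}$ and the remaining combinatorics; the standard device is to optimise the order $N$ of expansion as a function of $\langle\xi\rangle_h$. The second remainder $r_\infty$ is the genuinely infinite-order part: it collects the contributions where the oscillatory integral in $\eta$ is estimated by integrating by parts many times, leaving an integral of $e^{-c\langle\eta\rangle_h^{1/\mu}+c'\langle\xi+\eta\rangle_h^{1/\kappa}}$-type factors (exactly as in the commented-out Remark on the reverse operator), which after using $\langle\xi+\eta\rangle_h\le\langle\xi\rangle_h+|\eta|$ and $\mu<\kappa$ yields a bound decaying like $e^{-c'\langle\xi\rangle_h^{1/\theta}}$; since $\theta>\kappa$ is false we instead use $\mu<\theta<\kappa$, so this exponential decay is genuine and dominates any polynomial loss.

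The role of the largeness of $h$ is the following: the Faà di Bruno estimates for $e^{\pm\Lambda}$ from \cite[Lemma 6.2]{KN} involve the quantity $\rho_0$ proportional to $C_\Lambda$, and to close the estimates — in particular to sum the Neumann-type series implicit in treating the composition and to ensure the oscillatory integrals converge with the claimed constants uniformly in $h$ — one needs the relevant small parameter, essentially $C_\Lambda\langle\xi\rangle_h^{1/\kappa-1}$, to be bounded by an absolute constant, which holds as soon as $h\ge h_0(C_\Lambda)$. I would therefore fix $h_0$ at the outset so that $C_\Lambda h^{1/\kappa-1}\le \varepsilon_0$ for a suitable absolute $\varepsilon_0$, and then carry all constants through. \textbf{The main obstacle} I anticipate is the bookkeeping of the constants in the two remainders: one must show that the ``bad'' factorials produced by differentiating products of Gevrey symbols of order $\mu$ combine to exactly $\alpha!^\theta\beta!^\theta N!^{2\theta-1}$ with an $A$-dependent (but $\alpha,\beta,N$-uniform up to $A^{|\alpha+\beta|+2N}$) constant, and that all estimates are genuinely uniform in $h\ge h_0$ — which is where the remark about replacing $\langle\xi\rangle$ by $\langle\xi\rangle_h$ without changing constant dependence is used in an essential way. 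Everything else is a routine, if lengthy, adaptation of the analytic-pseudodifferential calculus of \cite{KN,KW} to the projective Gevrey classes $\Gamma^m_\theta$.
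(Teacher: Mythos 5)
The paper itself does not prove Theorem \ref{conjthmnew}: the remark immediately following the statement defers the proof to Theorems 6.9 and 6.10 of \cite{KN} and Theorem 2 of \cite{AACgev}, noting only that the argument transfers verbatim to the projective classes. Your overall strategy --- asymptotic expansion of the composed symbol, separate treatment of the finite-order Taylor remainder $r_N$ and of the infinite-order remainder $r_\infty$, and the choice of $h_0(C_\Lambda)$ so that $C_\Lambda\langle\xi\rangle_h^{1/\kappa-1}$ is small --- is exactly the strategy of those references, and your heuristics for the exponents $(1-\frac1\kappa)N$ and $N!^{2\theta-1}$ are the right ones.

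There is, however, a concrete error at the very first step. The triple composition $e^\Lambda(x,D)\,p(x,D)\,{}^R\{e^{-\Lambda}(x,D)\}$ does \emph{not} have the single oscillatory-integral symbol you wrote down: the middle factor prevents the reduction to one pair of $(y,\eta)$ variables, since $p(x,D)\circ{}^R\{e^{-\Lambda}(x,D)\}$ is an amplitude operator with amplitude $p(x,\xi)e^{-\Lambda(y,\xi)}$, and composing $e^\Lambda(x,D)$ on the left with an amplitude operator forces a second oscillatory integration. A direct computation gives a symbol of the form
$$
\sigma(x,\xi)=Os-\iiiint e^{-iz\eta+i(z-y)\zeta}\,e^{\Lambda(x,\xi+\eta)}\,p(x+z,\xi+\zeta)\,e^{-\Lambda(x+y,\xi+\zeta)}\,dy\,dz\,\dslash\zeta\,\dslash\eta,
$$
and it is precisely the two successive compositions (first $e^\Lambda(x,D)\circ p(x,D)$, giving the inner sum $\sum_\beta\frac1{\beta!}\partial_\xi^\beta e^{\Lambda}D_x^\beta p$; then composition with the reverse operator, which pairs each $\partial_\xi^\alpha$ with a $D_x^\alpha$ falling on $e^{-\Lambda}$ alone) that produce the nested expansion $\partial_\xi^\alpha\{\partial_\xi^\beta e^{\Lambda}D_x^\beta p\,D_x^\alpha e^{-\Lambda}\}$ of the statement; expanding your single integral would yield terms of a different shape, so the claimed identity would not come out. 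A second, smaller point: your explanation of the decay of $r_\infty$ is garbled (``since $\theta>\kappa$ is false we instead use $\mu<\theta<\kappa$''). The decay $e^{-c'\langle\xi\rangle_h^{1/\theta}}$ comes from the region $|\eta|\gtrsim\langle\xi\rangle_h$, where the factor $e^{-c\langle\eta\rangle_h^{1/\mu}}$ obtained by optimizing the number of integrations by parts dominates $e^{c''\langle\xi+\eta\rangle_h^{1/\kappa}}$ because $1/\mu>1/\theta>1/\kappa$. Both points are repairable by following \cite{KN} and \cite{AACgev} literally, but as written the plan would not compile into a proof of the stated expansion.
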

	
	\begin{Rem}
		Notice that choosing $N$ sufficiently large depending on $\kappa$, we can consider $r_N$ as a symbol of order $0$. Concerning the remainder term $r_\infty$, it is easy to verify that the corresponding operator possesses regularizing properties in Gevrey classes, namely it maps $(G_0^\theta)'(\R^n)$ into $G^\theta(\R^n)$. However, to prove our results it will be sufficient to regard also $r_\infty$ as a symbol of $\tilde\Gamma^0_\theta(\R^{2n}).$ In conclusion, in the computations of Section 4, choosing $N$ large enough, we shall always consider the remainder term $r_N+r_\infty$ as a symbol of $\tilde\Gamma^0_\theta(\R^{2n})$ and apply to it Proposition \ref{secondconjthm} below.
	\end{Rem}
	
	\begin{Rem} 
			The proof of Theorem \ref{conjthmnew} follows by applying
			readily in the projective Gevrey setting the same argument used in the proofs of of Theorems $6.9$ and $6.10$ of \cite{KN} and Theorem $2$ of \cite{AACgev} in the classical Gevrey framework. For this reason, we omit it for the sake of brevity. We just stress the fact that dealing now with projective Gevrey regular symbols $p$,   it is possible to conclude that the remainders $r_N$ and $r_{\infty}$ also satisfy this type of estimates, cf. \eqref{stimaperilresto}, \eqref{stimaperilresto2}.    
		\end{Rem}
	
	\noindent 
	Now we consider the conjugation with an operator of the form $e^{\Lambda_{\rho',k}}(t,D)$ where $\Lambda_{\rho',k}(t,\xi)= \rho' \langle \xi \rangle^{1/\theta}+k(T-t)\langle \xi \rangle^{2(1-\sigma)}$ for some $\rho' \in (0,\rho)$ and $k >0$ (where $\rho > 0$ is the same index appearing in the statement of Theorem \ref{main}).  The next result can be proved following the same argument as in the proof of \cite[Proposition 3.1]{AscanelliCappielloHolder}. Compared to the latter result, in the present case, the conjugation can be performed for every $\rho' >0$ since the symbol of the operator satisfies \textit{projective} Gevrey estimates. 
	Namely, we have the following result.
	\begin{Prop}\label{secondconjthm}
		Let $p \in \tilde{\Gamma}^{m}_{\theta}(\R^{2})$.
		Then we can write 
		$$
		e^{\Lambda_{\rho',k}}(t,D)\circ p(x,D) \circ e^{-\Lambda_{\rho',k}}(t,D) = op \left( \sum_{\alpha < N} \frac{1}{\alpha!} \partial^{\alpha}_{\xi} e^{\Lambda_{\rho',k}(t,\xi)} D^{\alpha}_{x} p(x,\xi) e^{-\Lambda_{\rho',k}(t,\xi)} \right) + r_N(t,x,D),
		$$
		where $r_{N}$ satisfies: for every $A > 0$ there exists $C_{k,\rho',A} > 0$ such that 
		$$
		|\partial^{\alpha}_{\xi} \partial^{\beta}_{x} r_{N}(t,x,\xi)| \leq |p|_{A} C_{k,\rho',A,N} A^{\alpha+\beta} \langle \xi \rangle^{m-N(1-\frac{1}{\theta})}.
		$$
	\end{Prop}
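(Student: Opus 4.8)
\textbf{Proof plan for Proposition \ref{secondconjthm}.}

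The plan is to follow the standard symbolic calculus scheme for conjugation by a pseudodifferential operator of infinite order, as carried out in \cite[Proposition 3.1]{AscanelliCappielloHolder}, and to track carefully the dependence of all constants on $\rho'$, $k$ and the seminorm $|p|_A$ of $p$. First I would write the composition $e^{\Lambda_{\rho',k}}(t,D)\circ p(x,D)\circ e^{-\Lambda_{\rho',k}}(t,D)$ as an oscillatory integral operator. Since $e^{-\Lambda_{\rho',k}}(t,D)$ has symbol depending on $\xi$ only and is a Fourier multiplier, the composition $p(x,D)\circ e^{-\Lambda_{\rho',k}}(t,D)$ is the operator with symbol $p(x,\xi)e^{-\Lambda_{\rho',k}(t,\xi)}$; composing on the left with $e^{\Lambda_{\rho',k}}(t,D)$, again a multiplier, gives after the usual manipulation the symbol
$$
q(t,x,\xi)=Os-\iint e^{-iy\eta}\,e^{\Lambda_{\rho',k}(t,\xi+\eta)}\,p(x+y,\xi+\eta)\,e^{-\Lambda_{\rho',k}(t,\xi)}\,dy\,\dslash\eta,
$$
and then Taylor-expand $e^{\Lambda_{\rho',k}(t,\xi+\eta)}p(x+y,\xi+\eta)$ in the $\eta$ variable up to order $N$ around $\eta=0$. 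The polynomial part reproduces $\sum_{\alpha<N}\frac{1}{\alpha!}\partial_\xi^\alpha e^{\Lambda_{\rho',k}(t,\xi)}D_x^\alpha p(x,\xi)\,e^{-\Lambda_{\rho',k}(t,\xi)}$ by the usual identity relating $\eta^\alpha e^{-iy\eta}$ to $D_y^\alpha$ and integrating by parts, and the integral remainder of the Taylor formula is the candidate $r_N$.

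The key step, and the main technical obstacle, is the estimate of $r_N$. One writes $r_N$ as an integral over the segment $[0,1]$ of oscillatory integrals involving $\partial_\eta^\alpha\big(e^{\Lambda_{\rho',k}(t,\xi+s\eta)}p(x+y,\xi+s\eta)\big)$ with $|\alpha|=N$, and uses the Faà di Bruno formula together with the bound \eqref{expest} (with $\langle\xi\rangle$ replaced by $\langle\xi\rangle_h$ if needed, as allowed by the remark preceding the statement) applied to $e^{\pm\Lambda_{\rho',k}}$. The crucial point is that $\Lambda_{\rho',k}(t,\xi)=\rho'\langle\xi\rangle^{1/\theta}+k(T-t)\langle\xi\rangle^{2(1-\sigma)}$ satisfies \emph{projective} Gevrey estimates: for every $A>0$ there is a constant such that $|\partial_\xi^\alpha\Lambda_{\rho',k}(t,\xi)|\le C_{\rho',k,A}A^{|\alpha|}\alpha!^{\theta}\langle\xi\rangle^{1/\theta-|\alpha|}$, with the subexponential factor $A^{|\alpha|}$ instead of a fixed $C_\Lambda^{|\alpha|}$; this is what allows the conjugation to be performed for \emph{every} $\rho'>0$ rather than only for $\rho'$ small (in contrast to \cite[Proposition 3.1]{AscanelliCappielloHolder}, where the symbol is only analytic/Gevrey of fixed regularity). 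Combining these bounds with the integration by parts in $y$ and $\eta$ needed to make the oscillatory integral absolutely convergent, and absorbing the resulting $e^{c\langle\xi\rangle^{1/\theta}}$ factors coming from $e^{\Lambda_{\rho',k}}$ against the $e^{-\rho'\langle\xi\rangle^{1/\theta}}$ from $e^{-\Lambda_{\rho',k}}$ (here the gain of $N(1-\frac1\theta)$ powers of $\langle\xi\rangle$ per Taylor step is produced exactly as in \cite{KN, AscanelliCappielloHolder}), yields
$$
|\partial_\xi^\alpha\partial_x^\beta r_N(t,x,\xi)|\le |p|_A\,C_{k,\rho',A,N}\,A^{\alpha+\beta}\,\langle\xi\rangle^{m-N(1-\frac1\theta)},
$$
with the linear dependence on $|p|_A$ coming from the single factor $p(x+y,\xi+s\eta)$ appearing in the remainder. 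The factor $N(1-\frac1\theta)$ rather than $N(1-\frac1\kappa)$ (as in Theorem \ref{conjthmnew}) appears because here the exponent in $\Lambda_{\rho',k}$ that governs the leading behaviour is $1/\theta$, not $1/\kappa$.

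Finally I would remark that the oscillatory integral defining $q(t,x,\xi)$ and the remainder $r_N$ are well defined by the standard regularization argument (insert cutoffs $\chi_\varepsilon(y)\chi_\varepsilon(\eta)$, obtain $\varepsilon$-uniform bounds, pass to the limit), exactly as in the bracketed remark following Proposition \ref{contgev} in the excerpt; since this is routine I would only sketch it. The upshot is the stated formula, with the explicit control on $r_N$ that will be used in Section \ref{linearized} to treat the conjugated linearized operator.
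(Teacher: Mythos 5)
Your proposal follows exactly the route the paper intends: the paper gives no proof of Proposition \ref{secondconjthm} beyond referring to the argument of \cite[Proposition 3.1]{AscanelliCappielloHolder} (and \cite{KN}), and your reconstruction — reduce to a single oscillatory integral using that both exponentials are Fourier multipliers, Taylor-expand in $\eta$, estimate the integral remainder via Fa\`a di Bruno and \eqref{expest}, and observe that the projective Gevrey estimates (each $\xi$-derivative of $e^{\Lambda_{\rho',k}}$ costing $\langle\xi\rangle^{1/\theta-1}$, whence the gain $N(1-\frac1\theta)$) remove the smallness restriction on $\rho'$ — is precisely that argument, with the correct linear dependence on $|p|_A$. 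One slip to fix: in your displayed oscillatory integral the factor should be $p(x+y,\xi)$, not $p(x+y,\xi+\eta)$, since in the composition formula the right-hand symbol $p(x,\xi)e^{-\Lambda_{\rho',k}(t,\xi)}$ is evaluated at the outer frequency $\xi$; as written, Taylor expansion in $\eta$ would differentiate $p$ in $\xi$ as well and would not reproduce the stated polynomial part $\sum_{\alpha<N}\frac{1}{\alpha!}\partial_\xi^\alpha e^{\Lambda_{\rho',k}(t,\xi)}D_x^\alpha p(x,\xi)e^{-\Lambda_{\rho',k}(t,\xi)}$. With that correction the rest of your outline goes through unchanged.
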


	\section{The linearized problem} \label{linearized}

	Fixed $u \in \Omega \subset X_{T} := C^{1}([0,T]; H^\infty_\theta(\R))$, where $\Omega$ denotes a bounded set, we now consider the linear Cauchy problem  
	\beqs\label{CPv}
	\left\{\begin{array}{ll}
		P_u(D)v(t,x):=P(t,x,u(t,x),D_t,D_x)v(t,x) = f(t,x), & (t,x)\in [0,T]\times\R,
		\\
		v(0,x)=g(x),
	\end{array}\right.
	\eeqs
	in the unknown $v$. In this section we shall prove the following result.
	
	\begin{Th}\label{iop}
		Under the assumptions of Theorem \ref{main}, given $m\in\R,\ \rho>0$, $\theta\in \left[\theta_0,\frac1{2(1-\sigma)}\right)$, 
		$u\in \Omega \subset C^{1}([0,T];H^{\infty}_{\theta}(\R))$, 
		$f\in C([0,T]; H^m_{\rho;\theta}(\R))$ and $g\in H^m_{\rho;\theta}(\R)$, there exists a unique solution
		$v\in C^1([0,T]; H^{m}_{\rho-\delta;\theta}(\R))$ for every $\delta \in (0,\rho)$ of the Cauchy problem \eqref{CPv}
		and the following energy
		estimate is satisfied:
		\beqs
		\label{enestv}
		\|v(t,\cdot)\|^2_{H^m_{\rho-\delta;\theta}}\leq C_{\Omega,\rho,T}
		\left(\|g\|^2_{H^m_{\rho;\theta}}+
		\int_0^t\|f(\tau,\cdot)\|^2_{H^m_{\rho;\theta}} \,d\tau\right)\quad
		\forall t\in[0,T],
		\eeqs
		for some positive constant $C_{\Omega,\rho,T}$. Moreover, if $f\in C([0,T], H^{\infty}_{\theta}(\R)),\ g\in H^{\infty}_{\theta}(\R)$, then $v\in C^{1}([0,T]; H^{\infty}_{\theta}(\R))$.
	\end{Th}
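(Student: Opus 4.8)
The plan is to reduce the quasilinear-linearized problem \eqref{CPv} to a genuinely linear $3$-evolution equation with \emph{projective} Gevrey coefficients, and then run the change-of-variables/energy scheme of \cite{AACgev} (in turn modelled on the $H^\infty$-theory of \cite{ABZ}), with the extra care that projectivity of the symbol classes makes the loss in the $\rho$-scale arbitrarily small. First I would observe that, since $u\in\Omega\subset C^1([0,T];H^\infty_\theta(\R))$ and $H^\infty_\theta(\R)\hookrightarrow\gamma^\theta(\R)$, the family $\{u(t,\cdot):t\in[0,T],\,u\in\Omega\}$ is bounded in $\gamma^\theta(\R)$ and takes values in a fixed compact $K\subset\C$. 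Writing $\tilde a_j(t,x):=a_j(t,x,u(t,x))$ and applying the Faà di Bruno formula, the holomorphy estimate in $w$ of Definition \ref{functionspace} (the factor $\gamma!^{-1}$) combines with the Gevrey-$\theta$ bounds on the $x$-derivatives of $u$ — here the hypothesis $\theta\geq\theta_0$ is essential — to yield: for every $A>0$ there is $C=C(A,\Omega)>0$ such that $\sup_{\beta}\sup_{t,x}|\partial_x^\beta\tilde a_j(t,x)|\,A^{-\beta}\beta!^{-\theta}\langle x\rangle^{j\sigma/2}\leq C$ for $j=0,1,2$, uniformly in $u\in\Omega$, and $\tilde a_j\in C([0,T];\gamma^\theta(\R))$. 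Thus $P_u$ is a linear $3$-evolution operator meeting the hypotheses used in \cite{AACgev}, but with the Gevrey constant $A$ arbitrarily small.

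Next I would perform the change of variables. The term obstructing an $H^\infty$-type estimate is $\Im\tilde a_2(t,x)\xi^2$: of order $2$, with only $\langle x\rangle^{-\sigma}$ decay. Following \cite{ABZ, AACgev} (the $p=3$ analogue of Kajitani's change of variables), one constructs a real symbol $\tilde\Lambda(t,x,\xi)$ from $x$-primitives of $\Im\tilde a_2$, $\Im\tilde a_1$ and $\partial_x\Re\tilde a_2$ divided by $a_3(t)$ (which is licit since $a_3$ is real, continuous and bounded below), with a suitable $\langle\xi\rangle_h$-localization, so that $\tilde\Lambda$ satisfies \eqref{firstasslambda} and \eqref{equation_stronger_hypothesis_on_Lambda} with $\mu<\theta<\kappa:=\tfrac1{2(1-\sigma)}$; this is exactly where $\sigma>1/2$ is used (it keeps $\tilde\Lambda$ of admissible order $2(1-\sigma)<1$ with the required bounds) and where $\theta<\tfrac1{2(1-\sigma)}$ is needed. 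By Theorem \ref{conjthmnew}, for $h\geq h_0$ one gets
$$
e^{\tilde\Lambda}(t,x,D)\,P_u(D)\,{}^R\{e^{-\tilde\Lambda}\}(t,x,D)=D_t+a_3(t)D_x^3+\tilde b_2(t,x,D)+\tilde b_1(t,x,D)+\tilde b_0(t,x,D),
$$
where $\Re\tilde b_2(t,x,\xi)=\Re\tilde a_2(t,x)\xi^2$ modulo a symbol of order $2(1-\sigma)$, $\tilde b_1,\tilde b_0$ are of order $\leq1$ with the decays inherited from the first step, and the remainders $r_N,r_\infty$ of Theorem \ref{conjthmnew} are treated as symbols in $\tilde\Gamma^0_\theta(\R^2)$ via Proposition \ref{secondconjthm}. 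I would then conjugate once more with $e^{\Lambda_{\rho',k}}(t,D)$, $\Lambda_{\rho',k}(t,\xi)=\rho'\langle\xi\rangle^{1/\theta}+k(T-t)\langle\xi\rangle^{2(1-\sigma)}$ with $0<\rho'<\rho$: differentiation in $t$ produces the negative term $-k\langle\xi\rangle^{2(1-\sigma)}$ of the critical order $2(1-\sigma)$, available to absorb the remaining bad contributions.

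For the energy estimate, let $v$ solve \eqref{CPv} and put $w:=e^{\Lambda_{\rho',k}}(t,D)\,e^{\tilde\Lambda}(t,x,D)\,v$, which lies in $C^1([0,T];H^m)$ by continuity of the exponential operators. Differentiating $\|w(t,\cdot)\|_{H^m}^2$ and using the conjugated equation: the principal part $a_3D_x^3$ contributes a purely imaginary term ($a_3$ real), the first-order and commutator terms are handled by a sharp Gårding inequality, and all remaining terms of order $\leq 2(1-\sigma)$ — in particular what is left of $\Im\tilde a_2\,\xi^2$ and the $\Im\tilde a_1$ contribution, carrying the decays $\langle x\rangle^{-\sigma}$ and $\langle x\rangle^{-\sigma/2}$ — are dominated by $k\langle D\rangle^{2(1-\sigma)}$ once $k$ and $h$ are large depending only on $\Omega,T,\rho$. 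Gronwall's lemma then gives $\|w(t,\cdot)\|_{H^m}^2\leq C\bigl(\|w(0,\cdot)\|_{H^m}^2+\int_0^t\|e^{\Lambda_{\rho',k}}e^{\tilde\Lambda}f(\tau,\cdot)\|_{H^m}^2\,d\tau\bigr)$. Writing $v={}^R\{e^{-\tilde\Lambda}\}(t,x,D)\,e^{-\Lambda_{\rho',k}}(t,D)\,w$ and using Propositions \ref{contgev} and \ref{secondconjthm}, the decisive point is that, since $\tilde\Lambda$ and $\Lambda_{\rho',k}$ obey \emph{projective} Gevrey estimates, the composition of all conjugating operators and their reverses maps $H^m_{\rho;\theta}$ into $H^m_{\rho-\delta;\theta}$ for \emph{every} $\delta>0$; this produces \eqref{enestv} with $C_{\Omega,\rho,T}$ independent of $u\in\Omega$, for every $\delta\in(0,\rho)$. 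Uniqueness of $v\in C^1([0,T];H^m_{\rho-\delta;\theta})$ is then immediate from \eqref{enestv} applied to the difference of two solutions.

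Existence of $v$ would follow from the a priori bound \eqref{enestv} together with the analogous estimate for the adjoint $P_u^\ast$ (same conjugation scheme) via a standard duality argument, or by solving regularized problems (smoothing the coefficients, adding a vanishing parabolic term) and passing to the limit through the uniform estimate. The $C^1$-in-$t$ regularity with values in $H^m_{\rho-\delta;\theta}$ is read off the equation, $D_tv=i\bigl(f-a_3D_x^3v-\sum_{j\leq 2}a_jD_x^jv\bigr)\in C([0,T];H^{m-3}_{\rho-\delta';\theta})$ for $\delta'<\delta$, using that $H^{m-3}_{\rho-\delta';\theta}\hookrightarrow H^m_{\rho-\delta;\theta}$ (the exponential weight absorbs the three-derivative loss), so after relabelling $\delta$ one gets $v\in C^1([0,T];H^m_{\rho-\delta;\theta})$. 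Finally, if $f\in C([0,T];H^\infty_\theta(\R))$ and $g\in H^\infty_\theta(\R)$, then $f(t,\cdot),g\in H^m_{\rho;\theta}(\R)$ for every $\rho>0$; applying the above for each $\rho$ and invoking uniqueness to identify the solutions gives $v\in C^1([0,T];H^m_{\rho-\delta;\theta})$ for all $\rho>0$ and $\delta\in(0,\rho)$, i.e. $v\in C^1([0,T];H^\infty_\theta(\R))$. I expect the main obstacle to be precisely Steps two and three: designing $\tilde\Lambda$ so that Theorem \ref{conjthmnew} genuinely lowers the obstruction to order $2(1-\sigma)$, and then organizing the energy estimate so that — thanks to projectivity of the symbol classes — the loss in the $\rho$-scale is arbitrarily small, which is exactly the improvement over \cite{AACgev} that the statement demands.
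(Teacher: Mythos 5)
Your overall architecture is the paper's: Fa\`a di Bruno together with the projective bounds \eqref{eq_projective_estimates_for_u_in_Omega} gives Lemma \ref{compositionformula} for the linearized coefficients; one then conjugates by $e^{\tilde\Lambda}(x,D)$ and $e^{\Lambda_{\rho',k}}(t,D)$, proves $H^m$ well-posedness of the conjugated problem, and transfers back with an arbitrarily small loss $\delta$ thanks to projectivity, exactly as in Subsections 3.3--3.4. Two points in your middle steps, however, are genuine gaps. First, you build $\tilde\Lambda$ from $x$-primitives of $\Im\tilde a_2$, $\Im\tilde a_1$ and $\partial_x\Re\tilde a_2$ divided by $a_3(t)$; this makes $\tilde\Lambda$ depend on $t$ (through $1/a_3(t)$ and through $u(t,\cdot)$), so conjugating $\partial_t$ produces a term $-\partial_t\tilde\Lambda$ which requires differentiating $a_3$ and the coefficients in $t$ --- but $a_3$ is only assumed continuous. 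The paper avoids this by taking $\lambda_2,\lambda_1$ as in \eqref{equation_definition_lambda_2}--\eqref{equation_definition_lambda_1}: $t$- and $u$-independent primitives of the pure weights $\langle y\rangle^{-\sigma}$, $\langle y\rangle^{-\sigma/2}$, multiplied by large constants $M_2,M_1$ and by $w(\xi/h)=-\mathrm{sgn}\,a_3$ for large $|\xi|$; the hypothesis $|a_3(t)|\geq C_{a_3}>0$ then guarantees that the corrections $-\partial_\xi a_3\,\partial_x\lambda_j$ dominate the bad terms once $M_2,M_1$ are chosen large relative to the constants $C_\Omega$ of Lemma \ref{compositionformula}, uniformly in $u\in\Omega$.

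Second, with that (correct) construction the second-order obstruction does not drop to order $2(1-\sigma)$: after conjugation one is left with the \emph{nonnegative} order-two symbol $-\Im a_2\,\xi^2+3M_2|a_3|\xi^2\langle x\rangle^{-\sigma}$; only the piece supported where $1-\psi\neq0$ has order $2(1-\sigma)$ and is absorbed by $k\langle\xi\rangle_h^{2(1-\sigma)}$. To bound the operator with that nonnegative order-two symbol from below by $-C\|w\|_{L^2}^2$ one needs the Fefferman--Phong inequality, which you never invoke: sharp G{\aa}rding applied to a nonnegative symbol of order two only yields $\geq -C\|w\|^2_{H^{1/2}}$, and since $2(1-\sigma)<1/2$ when $\sigma>3/4$ this deficit cannot be reabsorbed by $k\langle D\rangle_h^{2(1-\sigma)}$, so your energy estimate does not close in general. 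With these two repairs (the $M$-large, $u$- and $t$-independent $\tilde\Lambda$, and Fefferman--Phong for the order-two block while sharp G{\aa}rding handles the order-one and order-$2(1-\sigma)$ blocks) your argument coincides with the paper's proof of Proposition \ref{proposition_Hm_well_posedness_for_the_conjugated_problem} and Theorem \ref{iop}.
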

	
In order to prove the theorem above we shall follow the same method used to prove the well-posedness of the Cauchy problem for linear $3$-evolution equations in $\mathcal{H}^\infty_\theta(\R)$ in \cite{AACgev}. This method is based on making a suitable change of variable in order to transform the Cauchy problem \eqref{CPv} for the operator $P_u(D)$ into an equivalent Cauchy problem which turns out to be well-posed in Sobolev spaces. The transformation we have in mind will be the composition of two transformations both defined by invertible pseudodifferential operators of infinite order. Namely it will be of the form 
	\begin{equation}\label{lalambda}
		Q_{\tilde{\Lambda}, k, \rho'}(t,x,D)=e^{\Lambda_{\rho',k}}(t,D) \circ e^{\tilde\Lambda}(x,D), 
	\end{equation}
	where $\tilde{\Lambda} = \lambda_2 + \lambda_1 \in S^{2(1-\sigma)}_{\mu} (\R^2)$ for some $\mu>1$, and $\Lambda_{\rho',k}(t,\xi)= \rho' \langle \xi \rangle_h^{\frac1\theta}+k(T-t)\langle \xi \rangle_h^{2(1-\sigma)}$ for some $\rho' \in (0,\rho), k >0$ and $h >> 1$ to be chosen later on. Then, by the inverse transformation, we recover the solution $v=Q_{\tilde{\Lambda}, k, \rho'}(t,x,D)^{-1}w$ of \eqref{CPv}, where $w$ stands for the solution of the auxiliary problem. The mapping properties of the transformations $Q_{\tilde{\Lambda}, k, \rho'}(t,x,D)$ and $Q_{\tilde{\Lambda}, k, \rho'}(t,x,D)^{-1}$ will determine the space where the Cauchy problem \eqref{CPv} is well-posed.
	The role of each part of the transformation $Q_{\tilde{\Lambda}, k, \rho'}(t,x,D)$ will be, broadly speaking, the following:
	\begin{itemize}
		\item In the transformation $e^{\tilde{\Lambda}}(x,D)$ the functions $\lambda_1$ and $\lambda_2$ will play two different roles: namely $\lambda_2$ will not change $a_3D_x^3$, but it will change the operator $a_2D_x^2$ into the sum of a positive operator plus a remainder of order $1$ which satisfies the same assumptions as $a_1D_x$, plus an error of order $2(1-\sigma)$ whereas $\lambda_1$ will not change the terms of order 2 and 3, but it will turn the terms of order $1$ into the sum of a positive operator, plus a remainder of order zero, plus an error of order at least $2(1-\sigma)$;
		\item the transformation with $e^{k(T-t)\langle D\rangle_h^{2(1-\sigma)}}$ will not change the terms of order 1, 2 and 3, but it will correct the error of order $2(1-\sigma)$, changing it into the sum of a positive operator plus a remainder of order zero;
		\item Finally, the transformations with $e^{\rho'\langle D\rangle_h^{\frac1\theta}}$ simply moves the setting of the Cauchy problem from Gevrey-Sobolev spaces spaces to standard Sobolev spaces: since $2(1-\sigma) < 1/\theta$ the leading part of $Q_{\tilde{\Lambda},k, \rho'}(t,x,\xi)$ is $e^{\rho' \langle \xi \rangle^{\frac{1}{\theta}}_{h}}$, then the inverse of $Q_{\tilde{\Lambda},k, \rho'}(t,x,D)$ possesses regularizing properties with respect to the spaces $H^m_{\rho; \theta}$, because $\rho'>0$.
	\end{itemize}
	Working step by step, in the next subsection we define the symbol $\tilde\Lambda$ and briefly state its main features, then in Subsection \ref{wpp} we perform the conjugation $Q_{\tilde{\Lambda}, k, \rho'} (iP_u)Q_{\tilde{\Lambda}, k, \rho'}^{-1}$, and finally in Subsection \ref{wpp2} we prove Theorem \ref{iop}.

	\subsection{Change of Variables}
	
	For $M_2, M_1 > 0$ and $h \geq 1$ a large parameter, we define 
	\begin{equation}\label{equation_definition_lambda_2}
		\lambda_2(x, \xi) = M_2 w\left(\frac{\xi}{h}\right) \int_{0}^{x} \langle y \rangle^{-\sigma}  
		\psi\left(\frac{\langle y \rangle}{\langle \xi \rangle^{2}_{h}}\right) dy, \quad (x, \xi) \in \R^2,
	\end{equation}
	\begin{equation}\label{equation_definition_lambda_1}
		\lambda_1(x, \xi) = M_1 w\left(\frac{\xi}{h}\right) \langle \xi \rangle^{-1}_{h} \int_{0}^{x} \langle y \rangle ^{-\frac{\sigma}{2}} \psi\left(\frac{\langle y \rangle}{\langle \xi \rangle^{2}_{h}}\right) dy, \quad (x, \xi) \in \R^2,
	\end{equation}
	where
	$$
	w (\xi) =
	\begin{cases}
		0, \quad\,\,\,\,\, \quad \quad |\xi| \leq 1,\\
		-\textrm{sgn}\, a_3, \quad |\xi| \geq 2,
	\end{cases}
	\quad 
	\psi (y) =
	\begin{cases}
		1, \quad |y| \leq \frac{1}{2}, \\
		0, \quad |y| \geq 1,
	\end{cases}
	$$	
	$|\partial^{\alpha}_{\xi} w(\xi)| \leq C_{w}^{\alpha + 1} \alpha!^{\mu}$, $|\partial^{\beta}_{y} \psi(y)| \leq C_{\psi}^{\beta + 1}\beta!^{\mu}$, with $\mu > 1$.
	
	The functions $\lambda_1$ and $\lambda_2$ have been introduced in \cite{AACgev}. They satisfy peculiar estimates where the powers of the weight functions $\langle \xi \rangle_h$ and $\langle x \rangle$ can be adjusted as needed thanks to the special structure of $\supp \psi$ and $\supp \psi'$. These estimates are contained in the following two lemmas which have been proved in \cite{AACgev}.
	\begin{Lemma}\label{lemma_estimates_lambda_2}
		Let $\lambda_2(x, \xi)$ as in (\ref{equation_definition_lambda_2}). Then the following estimates hold:
		\begin{itemize}
			\item[(i)] $|\partial^{\alpha}_{\xi}\lambda_2(x, \xi)| \leq M_2 C^{\alpha+1}_{\lambda_2} \alpha!^{\mu} \langle \xi \rangle^{-\alpha}_{h} \min\{\langle \xi \rangle^{2(1-\sigma)}_{h}, \langle x \rangle^{1-\sigma} \}$, for $\alpha \geq 0$;
			\item[(ii)] $| \partial^{\alpha}_{\xi} \partial^{\beta}_{x}\lambda_2(x, \xi)| \leq M_2 C^{\alpha+\beta +1}_{\lambda_2} \alpha!^{\mu} \beta!^{\mu} 
			\langle \xi \rangle^{-\alpha}_{h} \langle x \rangle^{-\sigma -\beta+1}$, for $\alpha \geq 0, \beta \geq 1$,
		\end{itemize}
		where $C_{\lambda_2}$ is a constant depending only on $C_{w}, C_{\psi}$ and $\sigma$. 
	\end{Lemma}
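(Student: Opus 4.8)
The plan is to exploit the product structure $\lambda_2(x,\xi)=M_2\,w(\xi/h)\,I(x,\xi)$ with $I(x,\xi):=\int_0^x\langle y\rangle^{-\sigma}\psi\bigl(\langle y\rangle/\langle\xi\rangle_h^2\bigr)\,dy$, and to estimate the factors separately. For $w$ one has $|\partial_\xi^{\alpha}w(\xi/h)|=h^{-|\alpha|}|w^{(\alpha)}(\xi/h)|\le C_w^{|\alpha|+1}\alpha!^\mu\langle\xi\rangle_h^{-|\alpha|}$, because for $\alpha\ge 1$ the derivative $w^{(\alpha)}$ is supported where $|\xi|\sim h$, so $h^{-|\alpha|}\sim\langle\xi\rangle_h^{-|\alpha|}$ there, while $w$ itself is bounded. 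The heart of the matter is thus the analysis of $\partial_\xi^\alpha\partial_x^\beta I$, for which two elementary facts are crucial: (a) on the support of $y\mapsto\psi(\langle y\rangle/\langle\xi\rangle_h^2)$ one has $\langle y\rangle\le\langle\xi\rangle_h^2$, and on the support of its derivatives (i.e. where $\psi'$ is evaluated) one has moreover $\langle y\rangle\ge\tfrac12\langle\xi\rangle_h^2$; (b) consequently $\langle\xi\rangle_h^{-2}\le 2\langle y\rangle^{-1}$ on the support of $\psi'$, and this is the mechanism that trades powers of $\langle\xi\rangle_h$ for powers of the spatial weight.

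For part (ii), where $\beta\ge 1$, the $x$-integration disappears after one derivative: $\partial_x I(x,\xi)=\langle x\rangle^{-\sigma}\psi(\langle x\rangle/\langle\xi\rangle_h^2)$, and $\partial_x^\beta I$ is then computed by the Leibniz rule applied to this product. The factor $\langle x\rangle^{-\sigma}$ is real-analytic, so each of its $x$-derivatives costs $C^{j}j!\,\langle x\rangle^{-j}$; the factor $\psi(\langle x\rangle/\langle\xi\rangle_h^2)$ is handled by the Fa\`{a} di Bruno formula, using that $\langle x\rangle$ is analytic and $\psi$ is Gevrey of order $\mu$, so its $x$-derivatives of order $\beta_2\ge 1$ are bounded by $C^{\beta_2+1}\beta_2!^\mu\langle\xi\rangle_h^{-2\beta_2}\langle x\rangle^{\beta_2}$ on the support of $\psi'$, which by (b) is $\le 2^{\beta_2}C^{\beta_2+1}\beta_2!^\mu\langle x\rangle^{-\beta_2}$. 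Collecting powers yields $|\partial_x^\beta I|\le C^{\beta+1}\beta!^\mu\langle x\rangle^{-\sigma-\beta+1}$. The $\xi$-derivatives act only through the factor $\langle\xi\rangle_h^{-2}$ inside $\psi$, and each one costs $\langle\xi\rangle_h^{-1}$ — again by Fa\`{a} di Bruno, the support of $\psi'$ forcing $\langle y\rangle\langle\xi\rangle_h^{-2}\le 1$, so that the dangerous factor $\langle\xi\rangle_h^{-2j}\langle x\rangle^{j}$ is harmless. One then recombines with the estimate for $\partial_\xi^{\alpha_1}w(\xi/h)$ via the Leibniz rule and the standard inequality $\sum_{\alpha_1+\alpha_2=\alpha}\binom{\alpha}{\alpha_1}\alpha_1!^\mu\alpha_2!^\mu\le C^{|\alpha|}\alpha!^\mu$, obtaining the claim with a constant $C_{\lambda_2}$ depending only on $C_w,C_\psi,\sigma$.

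For part (i), where $\beta=0$, the $\xi$-derivatives land on the integrand: $\partial_\xi^{\alpha_2}I(x,\xi)=\int_0^x\langle y\rangle^{-\sigma}\,\partial_\xi^{\alpha_2}\bigl[\psi(\langle y\rangle/\langle\xi\rangle_h^2)\bigr]\,dy$, and the same Fa\`{a} di Bruno argument (now with $y$ a parameter) gives $|\partial_\xi^{\alpha_2}\psi(\langle y\rangle/\langle\xi\rangle_h^2)|\le C^{\alpha_2+1}\alpha_2!^\mu\langle\xi\rangle_h^{-\alpha_2}\,\mathbf{1}_{\{\langle y\rangle\le\langle\xi\rangle_h^2\}}$, the cut-off coming from (a) and holding also when $\alpha_2=0$. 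Hence $|\partial_\xi^{\alpha_2}I(x,\xi)|\le C^{\alpha_2+1}\alpha_2!^\mu\langle\xi\rangle_h^{-\alpha_2}\int_0^{\min\{|x|,\,c\langle\xi\rangle_h^2\}}\langle y\rangle^{-\sigma}\,dy$, and since $\sigma<1$ the last integral is $\le C\min\{\langle x\rangle^{1-\sigma},\langle\xi\rangle_h^{2(1-\sigma)}\}$. Recombining once more with the bound on $\partial_\xi^{\alpha_1}w(\xi/h)$ through the Leibniz rule gives (i).

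The only delicate point is the bookkeeping in the Fa\`{a} di Bruno expansion of the composed cut-off $\psi(\langle y\rangle/\langle\xi\rangle_h^2)$ (resp. $\psi(\langle x\rangle/\langle\xi\rangle_h^2)$): one has to check that the geometric constants produced by composing a Gevrey-$\mu$ function with an analytic one remain under control \emph{uniformly in $h$}, and that the apparently unfavourable powers $\langle\xi\rangle_h^{-2}$ generated at each differentiation are systematically reabsorbed by means of the localization $\langle y\rangle\sim\langle\xi\rangle_h^2$ (resp. $\langle x\rangle\sim\langle\xi\rangle_h^2$) on the support of $\psi'$. Everything else — the Leibniz recombination and the final integration in $y$ — is routine.
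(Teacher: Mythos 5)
Your argument is correct and is essentially the standard proof of this lemma (the paper itself gives no proof here but defers to \cite{AACgev}, where exactly this strategy is used): factor out $w(\xi/h)$, whose $\xi$-derivatives are localized to $|\xi|\sim h$ so that $h^{-\alpha_1}\sim\langle\xi\rangle_h^{-\alpha_1}$ there, and control the integral factor by Fa\`a di Bruno together with the two support observations $\langle y\rangle\leq\langle\xi\rangle_h^{2}$ on $\supp\psi$ and $\langle y\rangle\geq\tfrac12\langle\xi\rangle_h^{2}$ on $\supp\psi^{(q)}$, $q\geq1$, which is precisely what converts the factors $\langle\xi\rangle_h^{-2q}\langle y\rangle^{q}$ into harmless constants and, for $\beta\geq1$, trades decay in $\xi$ for decay in $x$. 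The intermediate bound you state for $\partial_x^{\beta_2}\psi(\langle x\rangle/\langle\xi\rangle_h^{2})$ is written for the extreme Fa\`a di Bruno term ($q=\beta_2$) but the general term $\langle x\rangle^{q-\beta_2}\langle\xi\rangle_h^{-2q}$ obeys the same final estimate $C^{\beta_2}\langle x\rangle^{-\beta_2}$ on the relevant support, so this is only a cosmetic looseness and the conclusion stands.
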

	
	\begin{Lemma}\label{lemma_estimates_lambda_1}
		Let $\lambda_1(x,\xi)$ as in (\ref{equation_definition_lambda_1}). Then
		\begin{itemize}
			\item[(i)] $|\partial^{\alpha}_{\xi}\lambda_1(x, \xi)| \leq M_1 C^{\alpha+1}_{\lambda_1} \alpha!^{\mu} \langle \xi \rangle^{-\alpha}_{h} \min \{\langle \xi \rangle^{1-\sigma}_{h},  \langle \xi \rangle^{-1}_{h} \langle x \rangle^{1 - \frac{\sigma}{2}}, \langle x \rangle^{\frac{1}{2} - \frac{\sigma}{2}} \}$, for $\alpha \geq 0$;
			\item[(ii)] $| \partial^{\alpha}_{\xi} \partial^{\beta}_{x}\lambda_1(x, \xi)| \leq M_1 C^{\alpha+\beta +1}_{\lambda_1} \alpha!^{\mu} \beta!^{\mu} 
			\langle \xi \rangle^{-\alpha}_{h} \langle x \rangle^{-\frac{\sigma}{2} -\beta+1} \min \{\langle \xi \rangle_h^{-1}, \langle x \rangle^{-\frac{\sigma}2}\}$, for $\alpha \geq 0, \beta \geq 1$,
		\end{itemize}
		where $C_{\lambda_1}$ is a constant depending only on $C_{w}, C_{\psi}$ and $\sigma$. 
	\end{Lemma}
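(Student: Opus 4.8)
The plan is to differentiate the explicit formula \eqref{equation_definition_lambda_1} directly, exactly as in the proof of Lemma \ref{lemma_estimates_lambda_2}, exploiting the two structural features of the cut-offs: $w(\xi/h)$ is supported in $|\xi|\ge h$ (so on its support $\langle\xi\rangle_h\sim|\xi|\ge h$ and every factor $h^{-1}$ produced by a $\xi$-derivative is $\lesssim\langle\xi\rangle_h^{-1}$), while $\psi(\langle y\rangle/\langle\xi\rangle_h^2)$ is supported in $\langle y\rangle\le\langle\xi\rangle_h^2$ and all its derivatives of order $\ge1$ are supported in the annulus $\tfrac12\langle\xi\rangle_h^2\le\langle y\rangle\le\langle\xi\rangle_h^2$. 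Writing $\lambda_1=M_1\,w(\xi/h)\,\langle\xi\rangle_h^{-1}\,I(x,\xi)$ with $I(x,\xi)=\int_0^x\langle y\rangle^{-\sigma/2}\psi(\langle y\rangle/\langle\xi\rangle_h^2)\,dy$, I would treat (i) and (ii) by Leibniz, distributing the $\xi$-derivatives among the three factors $w(\xi/h)$, $\langle\xi\rangle_h^{-1}$ and $I$, and — for (ii) — first reducing the $x$-derivatives of the integral by the fundamental theorem of calculus.

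The technical core is the composite estimate for $\partial_\xi^{\alpha}\partial_x^{j}[\psi(\langle\cdot\rangle/\langle\xi\rangle_h^2)]$. I would obtain it by Fa\`a di Bruno applied to $\psi$ composed with $a(x,\xi)=\langle x\rangle\langle\xi\rangle_h^{-2}$: since on the support of $\psi^{(k)}$, $k\ge1$, one has $\langle x\rangle\sim\langle\xi\rangle_h^2$, each $\xi$-derivative of $a$ contributes a factor $\sim\langle\xi\rangle_h^{-1}$ and each $x$-derivative a factor $\sim\langle x\rangle^{-1}$, while the Gevrey bounds $|\psi^{(k)}|\le C_\psi^{k+1}k!^\mu$ and the combinatorics of the Bell polynomials assemble into $\alpha!^\mu j!^\mu$ times geometric constants. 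This gives, on the annulus, $|\partial_\xi^{\alpha}\partial_x^{j}\psi(\langle\cdot\rangle/\langle\xi\rangle_h^2)|\le C^{\alpha+j+1}\alpha!^\mu j!^\mu\langle\xi\rangle_h^{-\alpha}\langle\cdot\rangle^{-j}$.

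For (i), I would estimate $\partial_\xi^{\alpha_3}I$ in two regimes. For $\alpha_3=0$ the integrand is supported where $\langle y\rangle\le\min\{\langle x\rangle,\langle\xi\rangle_h^2\}$, so integrability of $\langle y\rangle^{-\sigma/2}$ (here $\sigma/2<1$) yields $|I|\le C\min\{\langle x\rangle,\langle\xi\rangle_h^2\}^{1-\sigma/2}$; for $\alpha_3\ge1$ the annulus localization forces $\langle x\rangle\gtrsim\langle\xi\rangle_h^2$ and gives $|\partial_\xi^{\alpha_3}I|\le C^{\alpha_3+1}\alpha_3!^\mu\langle\xi\rangle_h^{2-\sigma-\alpha_3}$. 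Multiplying by $\langle\xi\rangle_h^{-1}$ and the factor $\langle\xi\rangle_h^{-\alpha_1-\alpha_2}$ coming from $w$ and $\langle\xi\rangle_h^{-1}$, the $\alpha_3=0$ terms produce $\langle\xi\rangle_h^{-\alpha-1}\min\{\langle x\rangle,\langle\xi\rangle_h^2\}^{1-\sigma/2}$ and the $\alpha_3\ge1$ terms produce $\langle\xi\rangle_h^{-\alpha}\langle\xi\rangle_h^{1-\sigma}$; a short case analysis in $\langle x\rangle\lessgtr\langle\xi\rangle_h^2$ shows both are dominated by $\langle\xi\rangle_h^{-\alpha}\min\{\langle\xi\rangle_h^{1-\sigma},\langle\xi\rangle_h^{-1}\langle x\rangle^{1-\sigma/2},\langle x\rangle^{1/2-\sigma/2}\}$, using $0<\sigma<1$. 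For (ii), $\partial_x^\beta I=\partial_x^{\beta-1}[\langle x\rangle^{-\sigma/2}\psi(\langle x\rangle/\langle\xi\rangle_h^2)]$; a Leibniz expansion in $x$ puts the worst decay on the term where all $x$-derivatives fall on $\langle x\rangle^{-\sigma/2}$, giving $\langle x\rangle^{-\sigma/2-\beta+1}$, while every derivative landing on $\psi$ exchanges a power $\langle x\rangle^{-1}$ against the gain $\langle\xi\rangle_h^{-2}\sim\langle x\rangle^{-1}$ from the composite estimate, so the $\langle x\rangle$-exponent is unchanged. Since the whole integrand is supported in $\langle x\rangle<\langle\xi\rangle_h^2$, there $\langle\xi\rangle_h^{-1}\le\langle x\rangle^{-1/2}\le\langle x\rangle^{-\sigma/2}$, so the stated minimum equals $\langle\xi\rangle_h^{-1}$ and it suffices to retain the prefactor $\langle\xi\rangle_h^{-1}$; combining with $\langle\xi\rangle_h^{-\alpha}$ yields exactly $\langle\xi\rangle_h^{-\alpha}\langle x\rangle^{-\sigma/2-\beta+1}\langle\xi\rangle_h^{-1}$.

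Finally, summing the Leibniz and Fa\`a di Bruno contributions, the multinomial coefficients absorb into $\alpha!^\mu\beta!^\mu$ with a new geometric constant $C_{\lambda_1}$ depending only on $C_w$, $C_\psi$, $\sigma$ (the $\mu$-Gevrey composition inequality is what keeps the factorials of Gevrey type). I expect the main obstacle to be purely bookkeeping: carrying out the two-variable Fa\`a di Bruno for $\psi(\langle x\rangle/\langle\xi\rangle_h^2)$ with the correct annulus localization, and then checking that the competing bounds collapse to the claimed minima uniformly across the regimes $\langle x\rangle\lessgtr\langle\xi\rangle_h^2$; none of these steps is conceptually different from the already-granted Lemma \ref{lemma_estimates_lambda_2}, the only new features being the extra prefactor $\langle\xi\rangle_h^{-1}$ and the milder weight $\langle y\rangle^{-\sigma/2}$.
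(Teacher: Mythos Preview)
The paper does not actually prove this lemma; it merely records the statement and cites \cite{AACgev} for the proof. Your proposal is correct and follows precisely the standard route one expects in that reference: direct differentiation of \eqref{equation_definition_lambda_1} via Leibniz, Fa\`a di Bruno for the composite $\psi(\langle y\rangle/\langle\xi\rangle_h^2)$, and the two key support observations (that $w(\xi/h)$ localizes to $|\xi|\ge h$, and that derivatives of $\psi$ localize to the annulus $\tfrac12\langle\xi\rangle_h^2\le\langle y\rangle\le\langle\xi\rangle_h^2$). Your identification of the two regimes $\langle x\rangle\lessgtr\langle\xi\rangle_h^2$ and the verification that in each regime the bound collapses to the stated minimum is exactly what is needed, and your remark that for $\beta\ge1$ the support forces $\langle\xi\rangle_h^{-1}\le\langle x\rangle^{-\sigma/2}$ (so the minimum in (ii) is attained by $\langle\xi\rangle_h^{-1}$) is the right shortcut. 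There is nothing to add beyond the bookkeeping you already flagged.
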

	
	\begin{Rem}
		Lemmas $\ref{lemma_estimates_lambda_2}$ and $\ref{lemma_estimates_lambda_1}$ imply $\lambda_2, \lambda_1 \in \textbf{\textrm{SG}}^{0, 1-\sigma}_{\mu}(\R^{2})$. Moreover, we also have that $\lambda_1 \in S^{1-\sigma}_{\mu}(\R^{2})$ and $\lambda_2 \in S^{2(1-\sigma)}_{\mu}(\R^{2})$. 
	\end{Rem}
	
	The following result proves the invertibility of the transformation $e^{\tilde\Lambda}(x,D)$ and expresses the inverse  in terms of a composition of  $^R\{e^{-\tilde{\Lambda}}(x,D)\}$ with a Neumann series, see \cite[Lemma 4]{AAC3evolGelfand-Shilov} for the proof. In the statement we shall denote by $\Sigma _\kappa(\R^2)$ the space of all symbols $\tau(x,\xi)$ satisfying for every $A>0, c>0$ an estimate of the form
	$$|\partial_\xi^\alpha \partial_x^\beta  \tau (x,\xi) | \leq C_A A^{\alpha+\beta}(\alpha!\beta!)^{\kappa} e^{-c(\langle x \rangle^{1/k}+\langle \xi \rangle_h^{1/k})},$$ cf. \cite{Pi}. 
	
	\begin{Lemma}\label{lemma_inverse_of_e_power_tilde_Lambda}
		Let $\mu > 1$. For $h \geq 1$ large enough, the operator $e^{\tilde{\Lambda}}(x,D)$ is invertible 
		and its inverse is given by 
		$$
		\{e^{\tilde{\Lambda}}(x,D)\}^{-1} = \hskip2pt ^R  \{e^{-\tilde{\Lambda}}(x,D)\} \circ \sum_{j \geq 0} (-r(x,D))^{j}, 
		$$
		for some $r = \tilde{r} + \bar{r}$, where $\tilde{r} \in \textbf{\textrm{SG}}^{-1,-\sigma}_{\mu}(\R^{2})$, $\bar{r} \in \Sigma_{\kappa}(\R^{2})$ for every 
		$\kappa> 2\mu-1$ and 
		$$
		\tilde{r} - \sum_{1 \leq \gamma \leq N} \frac{1}{\gamma!} \partial^{\gamma}_{\xi}(e^{\tilde{\Lambda}} D^{\gamma}_{x} e^{-\tilde{\Lambda}}) \in\textbf{\textrm{SG}}^{-1-N,-\sigma-\sigma N}_{\mu}(\R^{2}), \quad \forall N \geq 1.
		$$ 
		Moreover, $\sum (-r(x,D))^{j}$ has symbol in $\textbf{\textrm{SG}}^{0,0}_{\mu}(\R^{2}) + \Sigma_{\kappa}(\R^{2})$ for every $\kappa >2\mu-1$. Finally, we have	
		\begin{equation}\label{equation_inverse_of_e_power_tilde_Lambda_in_a_precise_way}
			\{e^{\tilde{\Lambda}}(x,D) \}^{-1} = \hskip2pt ^R \{e^{-\tilde{\Lambda}}(x,D)\} \circ \textrm{op} ( 1 - i\partial_{\xi} \partial_{x} \tilde{\Lambda} - \frac{1}{2}\partial^{2}_{\xi}(\partial^{2}_{x}\tilde{\Lambda} - [\partial_x \tilde{\Lambda}]^{2}) - [\partial_{\xi} \partial_{x} \tilde{\Lambda} ]^{2} + q_{-3} ),
		\end{equation}
		where $q_{-3} \in \textbf{\textrm{SG}}^{-3,-3\sigma}_{\mu}(\R^{2}) + \Sigma_{\kappa}(\R^{2})$.
	\end{Lemma}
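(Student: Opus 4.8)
The plan is to show that $e^{\tilde{\Lambda}}(x,D)$ and the reverse operator ${}^R\{e^{-\tilde{\Lambda}}(x,D)\}$ are mutually inverse up to a pseudodifferential operator of the form $I+r(x,D)$ with a \emph{small} symbol $r$, and then to invert $I+r(x,D)$ by a Neumann series; this is the scheme of \cite[Lemma 4]{AAC3evolGelfand-Shilov}. First I would compute the symbol of the composition $e^{\tilde{\Lambda}}(x,D)\circ {}^R\{e^{-\tilde{\Lambda}}(x,D)\}$. Using the oscillatory-integral definition of the reverse operator, this composition is the amplitude operator with amplitude $e^{\tilde{\Lambda}(x,\xi)-\tilde{\Lambda}(y,\xi)}$, which equals $1$ on the diagonal $y=x$; reducing the amplitude to a left symbol by the standard Taylor/stationary-phase argument gives $\mathrm{op}(1+r)$, where $r=\tilde{r}+\bar{r}$, with $\tilde{r}$ collecting the terms $\frac{1}{\gamma!}\partial^{\gamma}_{\xi}(e^{\tilde{\Lambda}}D^{\gamma}_{x}e^{-\tilde{\Lambda}})$ for $1\le\gamma<N$ (any fixed $N$) and $\bar{r}$ the corresponding Gevrey-negligible remainder. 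Since $\tilde{\Lambda}\in\textbf{\textrm{SG}}^{0,1-\sigma}_{\mu}(\R^{2})$, Lemmas \ref{lemma_estimates_lambda_2} and \ref{lemma_estimates_lambda_1} show that $\partial_x\tilde{\Lambda}$ carries a factor $\langle x\rangle^{-\sigma}$ and each $\partial_\xi$ a factor $\langle\xi\rangle^{-1}_{h}$; since moreover every term of $\tilde{r}$ contains the cutoff $w(\xi/h)$ and is therefore supported in $|\xi|\gtrsim h$, one gets $\tilde{r}\in\textbf{\textrm{SG}}^{-1,-\sigma}_{\mu}(\R^{2})$ with $\textbf{\textrm{SG}}$-seminorms tending to $0$ as $h\to\infty$, together with the stated finite-order expansion property of $\tilde{r}$, while $\bar{r}\in\Sigma_{\kappa}(\R^{2})$ for every $\kappa>2\mu-1$ and is exponentially small in $h$.

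Next I would fix $h\ge h_0$ so large that $r(x,D)$ becomes a contraction; by the symbolic calculus in the classes $\textbf{\textrm{SG}}_{\mu}$, the symbol of $r(x,D)^{j}$ lies in $\textbf{\textrm{SG}}^{-j,-j\sigma}_{\mu}(\R^{2})+\Sigma_{\kappa}(\R^{2})$ with seminorms decaying geometrically in $j$, so the symbol of $\sum_{j\ge0}(-r(x,D))^{j}$ converges in $\textbf{\textrm{SG}}^{0,0}_{\mu}(\R^{2})+\Sigma_{\kappa}(\R^{2})$. This convergence is the technical core and is precisely the content of \cite[Lemma 4]{AAC3evolGelfand-Shilov}, which I would invoke. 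From the identity $(I+r(x,D))\circ\sum_{j=0}^{N}(-r(x,D))^{j}=I-(-r(x,D))^{N+1}$ and the fact that the symbol of $(-r(x,D))^{N+1}$ tends to $0$ in the class as $N\to\infty$, it follows that $B:={}^R\{e^{-\tilde{\Lambda}}(x,D)\}\circ\sum_{j\ge0}(-r(x,D))^{j}$ is a right inverse of $e^{\tilde{\Lambda}}(x,D)$. An entirely analogous computation for ${}^R\{e^{-\tilde{\Lambda}}(x,D)\}\circ e^{\tilde{\Lambda}}(x,D)=\mathrm{op}(1+r')$, with $r'$ small of the same type, produces a left inverse $C$; since a bounded operator possessing both a left and a right inverse is invertible and the two coincide, $B=C$ is the two-sided inverse. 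This yields the first displayed identity of the statement with the claimed memberships of $\tilde{r}$, $\bar{r}$ and of the symbol of $\sum(-r(x,D))^{j}$.

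Finally, for the refined expansion \eqref{equation_inverse_of_e_power_tilde_Lambda_in_a_precise_way} I would expand the symbol of $\sum_{j\ge0}(-r(x,D))^{j}$ up to order $-3$. Using $e^{\tilde{\Lambda}}D_{x}e^{-\tilde{\Lambda}}=-D_{x}\tilde{\Lambda}$ and $e^{\tilde{\Lambda}}D_{x}^{2}e^{-\tilde{\Lambda}}=\partial^{2}_{x}\tilde{\Lambda}-[\partial_{x}\tilde{\Lambda}]^{2}$ one obtains $\tilde{r}=i\partial_{\xi}\partial_{x}\tilde{\Lambda}+\frac{1}{2}\partial^{2}_{\xi}(\partial^{2}_{x}\tilde{\Lambda}-[\partial_{x}\tilde{\Lambda}]^{2})$ modulo $\textbf{\textrm{SG}}^{-3,-3\sigma}_{\mu}(\R^{2})$, while the symbol of $r(x,D)^{2}$ contributes $-[\partial_{\xi}\partial_{x}\tilde{\Lambda}]^{2}$ modulo the same class and all $r(x,D)^{j}$ with $j\ge3$ are already of order $\le-3$; collecting the terms of $1-r+(\textrm{symbol of }r(x,D)^{2})-\cdots$ gives $1-i\partial_{\xi}\partial_{x}\tilde{\Lambda}-\frac{1}{2}\partial^{2}_{\xi}(\partial^{2}_{x}\tilde{\Lambda}-[\partial_{x}\tilde{\Lambda}]^{2})-[\partial_{\xi}\partial_{x}\tilde{\Lambda}]^{2}+q_{-3}$ with $q_{-3}\in\textbf{\textrm{SG}}^{-3,-3\sigma}_{\mu}(\R^{2})+\Sigma_{\kappa}(\R^{2})$. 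I expect the main obstacle to be the second step: proving the convergence of the operator/symbol Neumann series inside the $\textbf{\textrm{SG}}_{\mu}$ Gevrey classes with uniform control of the $A$-dependent constants and of the Gevrey factorial factors — this is exactly where the largeness of $h$ and the careful book-keeping of the two families of remainders ($\textbf{\textrm{SG}}$ and $\Sigma_{\kappa}$) are indispensable.
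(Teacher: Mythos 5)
Your proposal is correct and follows essentially the same route as the paper, which itself gives no proof here but defers to \cite[Lemma 4]{AAC3evolGelfand-Shilov}: compose $e^{\tilde{\Lambda}}(x,D)$ with ${}^R\{e^{-\tilde{\Lambda}}(x,D)\}$ to get $I+r(x,D)$ with $r$ small for $h$ large, invert by a Neumann series in the $\textbf{\textrm{SG}}_{\mu}$ calculus, and extract the terms of order $>-3$ to obtain \eqref{equation_inverse_of_e_power_tilde_Lambda_in_a_precise_way}. Your symbol computations ($e^{\tilde{\Lambda}}D_x e^{-\tilde{\Lambda}}=-D_x\tilde{\Lambda}$, $e^{\tilde{\Lambda}}D_x^2 e^{-\tilde{\Lambda}}=\partial_x^2\tilde{\Lambda}-[\partial_x\tilde{\Lambda}]^2$, and the contribution $-[\partial_\xi\partial_x\tilde{\Lambda}]^2$ from $r\# r$) reproduce the stated expansion correctly.
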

	
	\begin{Rem}
		Since we can choose $\mu > 1$ arbitrarily close to $1$, we may assume $2\mu-1 < \theta$. Therefore we can take $\kappa < \theta$ in the above lemma.  
	\end{Rem}

		\subsection{Estimates for the linearized coefficients}
	Before starting to prove Theorem \ref{iop}, we need to state which type of estimates the coefficients of the linearized problem \eqref{CPv} satisfy under the assumptions of Theorem \ref{main}.
	
	Since $\Omega \subset X_T$ is bounded, we have that for any $n \in \N$ there exists $B_n > 0$ such that
	$$
	\sup_{w \in \Omega} \|w\|_{H^{0}_{n;\theta}} \leq B_n.  
	$$  
	On the other hand we can write 
	\begin{align*}
		D^{\alpha}_{x} u(t,x) = \int e^{i \xi x} e^{-\rho \langle \xi \rangle^{\frac{1}{\theta}}} \xi^{\alpha} e^{\rho \langle \xi \rangle^{\frac{1}{\theta}}} \widehat{u}(t,\xi) \dslash\xi.
	\end{align*}
	Since $u(t) \in H^{\infty}_{\theta}$, for any $\rho > 0$ from H\"older inequality we get
	\begin{align*}
		|D^{\alpha}_{x} u(t,x)|^{2} &\leq \int e^{-2\rho \langle \xi \rangle^{\frac{1}{\theta}}} \xi^{2\alpha} d\xi\, \|u(t)\|^{2}_{H^{0}_{\rho;\theta}} \\
		&\leq \left( \frac{2\theta}{\rho} \right)^{2\theta \alpha} \alpha!^{2\theta} \| e^{-\frac{\rho}{2} \langle \cdot \rangle}\|^{2}_{L^{2}} \|u(t)\|^{2}_{H^{0}_{\rho;\theta}}.
	\end{align*}
	The above estimate implies that for any $A > 0$ there is a positive constant $C_{\Omega, A}$ such that
	\begin{equation}\label{eq_projective_estimates_for_u_in_Omega}
		|D^{\alpha}_{x} u(t,x)| \leq C_{\Omega, A} A^{\alpha} \alpha!^{\theta},\quad t\in [0,T], x \in \R, \alpha \in \N_0, 
	\end{equation}
	for every $u \in \Omega$. In particular, we conclude that the values of $w = u(t,x)$ lie in a fixed compact set $K_{\Omega} = K \subset \C( \approx \R^{2})$ for every $u \in \Omega$. We shall fix this compact from now on. Using \eqref{eq_projective_estimates_for_u_in_Omega} and the fact that $a_j \in C([0,T], \Gamma^{\theta_0, \frac{j\sigma}{2}}(\R \times \C)), j=0,1,2$,
	in the next we shall estimate the $x-$derivatives of the maps $x \mapsto a_j(t,x,u(t,x))$. For this we need the Fa\`a di Bruno formula in several variables: let $g = (g_1, \ldots, g_p): \R^{n}\to \R^{p}$, $f:\R^{p} \to \R$ and $\beta \in \N^{n}_{0}$, then 
	\begin{equation} \label{Faa}
		D^{\beta}(f\circ g)(x) = \sum_{\ast} \frac{\beta!}{k_1!\ldots k_{\ell}!} \{D^{k_1+\dots +k_{\ell}}f\}(g(x)) \prod_{j=1}^{\ell} \prod_{i=1}^{p} 
		\left[ \frac{ D^{\delta_j}g_{i}(x) }{ \delta_j! } \right]^{k_{ji}}
	\end{equation}
	where the notation $ \sum\limits_\ast $ means that the sum is taken over all $\ell \in \N$, all sets $\{\delta_1, \ldots, \delta_\ell\}$ of $\ell$ distinct elements of $\N^{n}_{0}-\{0\}$ and all $(k_1, \ldots, k_\ell) \in (\N^{p}_{0}-\{0\})^{\ell}$ such that 
	$
	\sum_{j = 1} |k_j|\delta_j = \beta.
	$
	We also report two useful inequalities: 
	$$
	|k_1+\dots+k_{\ell}|! |\delta_1|!^{|k_1|} \cdots |\delta_\ell|!^{|k_\ell|} \leq |\beta|!
	$$ 
	and
	$$
	\sum \frac{\beta!}{k_1!\ldots k_{\ell}!} \lambda^{|k_1+\dots+k_{\ell}|} \leq C_{\lambda}^{|\beta|+1}, \quad \forall\, \lambda > 0,
	$$
	where $\beta, \ell, (\delta_1, \ldots, \delta_\ell), (k_1, \ldots, k_{\ell})$ are as in formula \eqref{Faa}. For details on Fa\`a di Bruno formula we address the reader to Proposition $4.3$, Corollary $4.5$ and Lemma $4.8$ of \cite{Faadibruno}.
	
	Let now $\beta \in \N_0$, then
	$$
	D^{\beta}_{x}(a_j(t,x,u(t,x))) = \sum_{\ast} \frac{\beta!}{k_1!\ldots k_{\ell}!} \{D^{k_1+\cdots + k_{\ell}}_{(x,w)}a_j\}(t,x,u(t,x)) \prod_{j=1}^{\ell} \prod_{i=1}^{3} 
	\left[ \frac{ D^{\delta_j}_{x}g_{i}(t,x) }{ \delta_j! } \right]^{k_{ji}},
	$$
	where $g_1(t,x) = x$, $g_2(t,x) = Re\, u(t,x)$ and $g_3(t,x) = Im\, u(t,x)$. Applying \eqref{eq_projective_estimates_for_u_in_Omega} and the assumptions on the $a_j$, we get for every $A, B>0$: 
	\begin{align*}
		|D^{\beta}_{x}(a_j(t,x,u(t,x)))| &\leq \sum_\ast \frac{\beta!}{k_1!\ldots k_{\ell}!} C_{K,A} A^{|k_1+\cdots + k_\ell|} |k_1+\cdots+k_\ell|!^{\theta_0} \langle x \rangle^{-\sigma j/2} \\
		&\times \prod_{j=1}^{\ell} \prod_{i=1}^{3} \left[ C_{\Omega, B} B^{\delta_j} \delta_j!^{\theta-1} \right]^{k_{ji}} \\
		&\leq C_{K,A} B^{\beta} \langle x \rangle^{-\sigma j/2} \beta! \\
		&\times \sum_\ast \frac{|k_1+\cdots+k_\ell|!}{k_1!\ldots k_{\ell}!} (C_{\Omega, B} A)^{|k_1+\cdots+k_\ell|} \underbrace{|k_1 + \cdots + k_{\ell}|!^{\theta-1} \prod_{j=1}^{\ell} \delta_j!^{(\theta-1)|k_j|}}_{\leq \beta!^{\theta-1}} \\
		&\leq C_{K,A} B^{\beta} \langle x \rangle^{-\sigma j/2} \beta!^{\theta}\sum_\ast \frac{|k_1+\cdots+k_\ell|!}{k_1!\ldots k_{\ell}!} (C_{\Omega, B} A)^{|k_1+\cdots+k_\ell|}.
	\end{align*}
	Taking $A = C_{\Omega, B}^{-1}$ it follows
	\begin{align*}
		|D^{\beta}_{x}(a_j(t,x,u(t,x)))| &\leq C_1 C_{K,\Omega,B} \{C_1B\}^{\beta}  \beta!^{\theta} \langle x \rangle^{-\sigma j/2}
	\end{align*}
	for some constant $C_1 > 0$ independent of $A$ and $B$ and $C_{K,\Omega,B} > 0$ which in fact depends only on $B$ and $\Omega$. Rescaling the constant $C_1 B$ we obtain the following result.
	
	\begin{Lemma} \label{compositionformula} Under the assumptions of Theorem \ref{main}, let $\Omega \subset X_T$ be a bounded subset. Then for every $B>0$ there exists a constant $C_{\Omega, B} > 0$ such that 
		\begin{equation}\label{eq_estimate_linearized_coefficients}
			|D^{\beta}_{x}(a_j(t,x,u(t,x)))| \leq C_{\Omega,B} B^{\beta}  \beta!^{\theta} \langle x \rangle^{-\sigma j/2}, \quad t \in [0,T], x \in \R, \beta \in \N_0,
		\end{equation}
		for every $u \in \Omega$.
	\end{Lemma}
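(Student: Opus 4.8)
The plan is to reduce the estimate to the multivariate Fa\`a di Bruno formula \eqref{Faa} applied to the composition $x\mapsto a_j(t,x,u(t,x))$, viewed as $f\circ g$ with inner map $g=(g_1,g_2,g_3)=(x,\Re u(t,x),\Im u(t,x))$ and outer map $f(x,w)=a_j(t,x,w)$, and then to bound separately the ``outer'' factors $\{D^{k_1+\cdots+k_\ell}_{(x,w)}a_j\}(t,x,u(t,x))$ and the ``inner'' factors $D^{\delta_j}_x g_i(t,x)$. As a preliminary step I would observe that, since $\Omega$ is bounded in $X_T$, the case $\alpha=0$ of \eqref{eq_projective_estimates_for_u_in_Omega} already gives a uniform sup bound on $u$, hence the values $u(t,x)$ stay in a fixed compact $K=K_\Omega\subset\C$ for all $u\in\Omega$, $t\in[0,T]$, $x\in\R$; this is exactly what makes both the defining estimate of $\Gamma^{\theta_0,\frac{j\sigma}{2}}(\R\times\C)$ from Definition \ref{functionspace} (evaluated on $K$) and the projective $\theta$-Gevrey bounds \eqref{eq_projective_estimates_for_u_in_Omega} on the derivatives of $u$ available simultaneously.

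Next I would insert the two estimates. By Definition \ref{functionspace}, for \emph{every} $A>0$ the outer factor is $\le C_{K,A}\,A^{|k_1+\cdots+k_\ell|}\,|k_1+\cdots+k_\ell|!^{\theta_0}\,\langle x\rangle^{-j\sigma/2}$: the $x$-derivatives produce the $\theta_0$-factorial and the decay $\langle x\rangle^{-j\sigma/2}$, the $w$-derivatives only a geometric factor absorbed into $A$. By \eqref{eq_projective_estimates_for_u_in_Omega}, each inner factor divided by $\delta_j!$ is $\le C_{\Omega,B}\,B^{\delta_j}\,\delta_j!^{\theta-1}$ (for $g_1=x$ only $\delta_j=1$ occurs, with a trivial bound). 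Substituting into \eqref{Faa}, factoring out $\langle x\rangle^{-j\sigma/2}$ and $\beta!$, the two combinatorial inequalities quoted after \eqref{Faa} finish the job: the first, raised to the power $\theta-1$, turns $|k_1+\cdots+k_\ell|!^{\theta-1}\prod_j\delta_j!^{(\theta-1)|k_j|}$ into $\beta!^{\theta-1}$ (this is where the $\beta!^\theta$ in the statement comes from), while the second bounds the leftover sum $\sum_\ast\frac{|k_1+\cdots+k_\ell|!}{k_1!\cdots k_\ell!}\,(C_{\Omega,B}A)^{|k_1+\cdots+k_\ell|}$ by $C_\lambda^{|\beta|+1}$ with $\lambda=C_{\Omega,B}A$.

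The one genuinely delicate point — and what I would regard as the heart of the argument — is squeezing the \emph{projective} character out of all this, i.e. getting an arbitrarily small base $B$ with a constant depending only on $B$ and $\Omega$. Here the freedom in $A$ coming from the projective class $\Gamma^{\theta_0,\frac{j\sigma}{2}}$ is decisive: choosing $A=C_{\Omega,B}^{-1}$ forces $\lambda=1$, so the leftover sum is bounded by an absolute $C_1^{|\beta|+1}$ independent of $A$ and $B$, leaving $|D^\beta_x(a_j(t,x,u(t,x)))|\le C_1 C_{K,\Omega,B}(C_1B)^\beta\beta!^\theta\langle x\rangle^{-j\sigma/2}$, after which renaming $C_1B$ as the free parameter yields \eqref{eq_estimate_linearized_coefficients} for every $B>0$. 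Two bits of bookkeeping I would watch: that one really obtains the exponent $\theta$ and not $\theta_0$ (the derivatives of $u$ only satisfy $\theta$-Gevrey estimates and $\theta_0\le\theta$, so a product of a $\theta_0$-Gevrey factor with several $(\theta-1)$-Gevrey factors is $\theta$-Gevrey), and that every constant is uniform in $t\in[0,T]$, which follows at once from the $C([0,T];\cdot)$ hypothesis on the $a_j$ together with the boundedness of $\Omega$.
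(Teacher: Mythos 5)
Your proof follows the paper's argument essentially verbatim: the same multivariate Fa\`a di Bruno decomposition with inner map $(x,\Re u,\Im u)$ and outer map $a_j$, the same two combinatorial inequalities (the first raised to the power $\theta-1$ to produce $\beta!^{\theta-1}$, the second to sum the multinomial weights), and the same decisive choice $A=C_{\Omega,B}^{-1}$ exploiting the projective class $\Gamma^{\theta_0,\frac{j\sigma}{2}}$ to make the leftover sum's base absolute before rescaling $C_1B$. No gaps.
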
 
	
	\subsection{The conjugation procedure}\label{wpp}
	In the present subsection we perform, step by step, the conjugations needed to obtain the operator $Q_{\tilde{\Lambda}, k,\rho' }(iP_u)Q_{\tilde{\Lambda}, k,\rho' }^{-1}$.
	
	\subsubsection{Conjugation with $e^{\tilde\Lambda}$}
	Now we perform the conjugation  of $iP_u$ by the operator $e^{\tilde\Lambda}(x,D)$, with $\tilde\Lambda(x,\xi) = \lambda_2(x,\xi) + \lambda_1(x,\xi)$. In the next computation, by abuse, we shall denote by $a_3(t,D)$ and $a_j(t,x,u,D)$ for $j=1,2$, the operators $a_3(t)D_x^3$ and $a_j(t,x,u)D_x^j, j=1,2$, respectively, and by $a_1,a_2,a_3$ their symbols, sometimes omitting the dependence on the variables $t,x,u$ and $\xi$.
	
	\begin{itemize}
		\item Conjugation of $ia_3(t,D)$: Since $a_3$ does not depend on $x$, Theorem \ref{conjthmnew} simplifies into (omitting $(t,x,D)$ in the notation)
		\begin{multline*}
			e^{\tilde{\Lambda}}(x,D) \circ ia_3(t,D) \circ  ^{R}\{e^{-\tilde{\Lambda}}(x,D)\} = ia_3(t,D) \\
			+ \textrm{op}\left(\partial_{\xi}\{ia_3 D_x(-\tilde{\Lambda})\} + \frac{1}{2} \partial^{2}_{\xi} \{ia_3 [D^{2}_{x}(-\tilde{\Lambda}) +(D_x\tilde{\Lambda})^{2}]\} + q_{3} + r_{\infty} \right).
		\end{multline*}
		Since $x-$derivatives kill the $\xi-$growth given by the integrals of $\tilde{\Lambda}$, we can conclude that $q_{3}$ has order zero. Composing with the Neumman series we get
		
		\begin{multline*}
			e^{\tilde{\Lambda}}(x,D) ia_3(t,D) \{e^{\tilde{\Lambda}}(x,D)\}^{-1} 
			= \textrm{op}\left(ia_3 - \partial_{\xi}(a_3 \partial_{x}\tilde{\Lambda}) + \frac{i}{2}\partial^{2}_{\xi}[a_3(\partial^{2}_{x}\tilde{\Lambda} - (\partial_{x}\tilde{\Lambda})^2)] + q_3 + r_{\infty}\right) \\ 
			\circ \textrm{op}\left( 1 - i\partial_{\xi} \partial_{x} \tilde{\Lambda} - \frac{1}{2}\partial^{2}_{\xi}(\partial^{2}_{x}\tilde{\Lambda} - [\partial_x \tilde{\Lambda}]^{2}) - [\partial_{\xi} \partial_{x} \tilde{\Lambda} ]^{2} + q_{-3} \right) \\
			= ia_3(t,D) +\textrm{op}\left(- \partial_{\xi}(a_3 \partial_{x}\tilde{\Lambda}) + \frac{i}{2}\partial^{2}_{\xi}\{a_3(\partial^{2}_{x}\tilde{\Lambda} - \{\partial_{x}\tilde{\Lambda}\}^2)\}
			+ a_3\partial_{\xi}\partial_{x}\tilde{\Lambda} - i\partial_{\xi}a_3\partial_{\xi}\partial^{2}_{x}\tilde{\Lambda} \right)
			\\
			+ \textrm{op}\left(i\partial_{\xi}(a_3 \partial_{x}\tilde{\Lambda})\partial_{\xi}\partial_{x}\tilde{\Lambda} - \frac{i}{2}a_3\{\partial^{2}_{\xi}(\partial^{2}_{x}\tilde{\Lambda} + [\partial_x \tilde{\Lambda}]^{2}) + 2[\partial_{\xi} \partial_{x} \tilde{\Lambda} ]^{2}\} + \tilde{r}_0 \right) \\
			= ia_3(t,D)+\textrm{op}\left(- \partial_{\xi}a_3 \partial_{x}\tilde{\Lambda} + \frac{i}{2}\partial^{2}_{\xi}\{a_3[\partial^{2}_{x}\tilde{\Lambda} - 
			(\partial_{x}\tilde{\Lambda})^2]\}
			- i\partial_{\xi}a_3\partial_{\xi}\partial^{2}_{x}\tilde{\Lambda} \right)
			\\
			+ \textrm{op}\left( i\partial_{\xi}(a_3 \partial_{x}\tilde{\Lambda})\partial_{\xi}\partial_{x}\tilde{\Lambda} - \frac{i}{2}a_3\{\partial^{2}_{\xi}(\partial^{2}_{x}\tilde{\Lambda} + [\partial_x \tilde{\Lambda}]^{2}) + 2(\partial_{\xi} \partial_{x} \tilde{\Lambda} )^{2}\} + \tilde{r}_0 \right), \\
		\end{multline*}
		where $\tilde{r}_0 \in C([0,T]; \tilde\Gamma^{0}_{\theta}(\R^{2}))$. From now on we are going to denote by $\tilde r_0$ all remainders of class $C([0,T]; \tilde\Gamma^{0}_{\theta}(\R^{2}))$ satisfying uniform estimates with respect to $u \in \Omega$. Writing $\tilde{\Lambda} = \lambda_2 + \lambda_1$ and noticing that $D_x \lambda_1$ has order $-1$ we get
		\begin{multline*}
			e^{\tilde{\Lambda}}(x,D) ia_3(t,D) \{e^{\tilde{\Lambda}}(x,D)\}^{-1} = ia_3(t,D) \\ +\textrm{op}\left( - \partial_{\xi}a_3 \partial_{x}\lambda_2 - \partial_{\xi}a_3\partial_{x}\lambda_1 + \frac{i}{2}\partial^{2}_{\xi}\{a_3(\partial^{2}_{x}\lambda_2 - \{\partial_{x}\lambda_2\}^2)\}
			- i\partial_{\xi}a_3\partial_{\xi}\partial^{2}_{x}\lambda_2 \right) 
			\\
			+ \textrm{op}\left( i\partial_{\xi}(a_3 \partial_{x}\lambda_2)\partial_{\xi}\partial_{x}\lambda_2 - \frac{i}{2}a_3\{\partial^{2}_{\xi}(\partial^{2}_{x}\lambda_2 + [\partial_x \lambda_2]^{2}) + 2[\partial_{\xi} \partial_{x} \lambda_2 ]^{2}\} + \tilde{r}_0 \right).
		\end{multline*}
		For simplicity we write in short 
		\begin{align*}
			d_1(t,x,\xi) &= \frac{1}{2}\partial^{2}_{\xi}\{a_3(\partial^{2}_{x}\lambda_2 - \{\partial_{x}\lambda_2\}^2)\}
			- \partial_{\xi}a_3\partial_{\xi}\partial^{2}_{x}\lambda_2 \\
			&+ \partial_{\xi}(a_3 \partial_{x}\lambda_2)\partial_{\xi}\partial_{x}\lambda_2 - \frac{1}{2}a_3\{\partial^{2}_{\xi}(\partial^{2}_{x}\lambda_2 + [\partial_x \lambda_2]^{2}) + 2[\partial_{\xi} \partial_{x} \lambda_2 ]^{2}\}.
		\end{align*}
		Hence 
		$$
		e^{\tilde{\Lambda}}(x,D) ia_3(t,D) \{e^{\tilde{\Lambda}}(x,D)\}^{-1} = ia_3 (t,D) +\textrm{op}\left(- \partial_{\xi}a_3 \partial_{x}\lambda_2 - \partial_{\xi}a_3\partial_{x}\lambda_1 + id_1 + \tilde{r}_0 \right).
		$$
		Notice that $d_1$ is a real valued symbol of order $1$ which does not depend on $\lambda_1$. Namely, we have the following estimates: for every $A > 0$ there exists $C_{\tilde\Lambda, A} > 0$ such that 
		\begin{equation} \label{stimad1}
		|\partial^{\alpha}_{\xi} \partial^{\beta}_{x} d_1(t,x,\xi)| \leq C_{\lambda_2, A} A^{\alpha+\beta} \alpha!^{\theta} \beta!^{\theta} \langle \xi \rangle^{1 - \alpha}_{h} 
		\langle x \rangle^{-\sigma}.
		\end{equation}

		\item Conjugation of $ia_2(t,x,u, D)$: for $N \in \N$ such that $2-N(2\sigma-1) \leq 0$, Theorem \ref{conjthmnew} and \eqref{eq_estimate_linearized_coefficients}  give 
		\begin{multline*}
			e^{\tilde{\Lambda}}(x,D) \circ ia_2(t,x,u,D) \circ\, ^{R}\{e^{-\tilde{\Lambda}}(x,D)\} = ia_2 (t,x,u,D) \\ + 
			\text{op}\underbrace{ \left( \sum_{1 \leq \alpha + \beta < N} \frac{1}{\alpha! \beta!} \partial^{\alpha}_{\xi} \{ \partial^{\beta}_{\xi} e^{\tilde{\Lambda}} D^{\beta}_{x} ia_2  D^{\alpha}_{x} e^{-\tilde{\Lambda}}\} \right)}_{=:(ia_2)_{N}} + \tilde{r}_0.	
		\end{multline*}
		Composing with the Neumann series and using that $\partial_x\lambda_1$ has order $-1$ we get
		\begin{multline*}
			e^{\tilde{\Lambda}} (x,D)\circ ia_2(t,x,u,D) \circ \{e^{\tilde{\Lambda}}(x,D)\}^{-1} =\textrm{op} (ia_2 + (ia_2)_{N} + \tilde{r}_0 + \tilde{r})\circ \textrm{op}(1 - i\partial_\xi\partial_x \lambda_2 + q_{-2}) \\
			= ia_2(t,x,u,D) + \textrm{op} ( (ia_2)_{N} + a_2 \circ \partial_\xi\partial_x\lambda_2 - i(ia_2)_{N} \circ\partial_\xi\partial_x \lambda_2 +\tilde{r}_0) \\
			= ia_2(t,x,u,D) + \textrm{op} ( \underbrace{(ia_2)_{N} - i(ia_2)_{N} \partial_\xi\partial_x \lambda_2}_{=: (ia_2)_{\tilde{\Lambda}} } + a_2 \partial_\xi\partial_x\lambda_2 + \tilde{r}_0 ).
		\end{multline*}
		Moreover, in view of \eqref{eq_estimate_linearized_coefficients}, we have the following estimates: for every $A > 0$ there exists $C_{\tilde\Lambda, \Omega, A} > 0$ such that
		\begin{equation}\label{stimaa2lambda}
		|\partial^{\alpha}_{\xi} \partial^{\beta}_{x} (ia_2)_{\tilde{\Lambda}}(t,x, u, \xi)| \leq  C_{\tilde\Lambda, \Omega, A} A^{\alpha+\beta} \alpha!^{\theta} \beta!^{\theta} \langle \xi \rangle^{2 - (2\sigma - 1) - \alpha }_{h} \langle x\rangle^{-\sigma},
		\end{equation}
		for every $u \in \Omega$.

		\item Conjugation of $ia_1(t,x,u, D):$ working as in the previous conjugation, we get
		\begin{multline*}
			e^{\tilde{\Lambda}}(x,D) \circ (ia_1)(t,x,u,D) \circ \{e^{\tilde{\Lambda}}(x,D)\}^{-1} = \textrm{op}(ia_1 + (ia_1)_{\tilde{\Lambda}} + r_1) \circ \sum_{j\geq 0}(-r)^{j} \\  = ia_1(t,x,u,D) + \textrm{op}((ia_1)_{\tilde{\Lambda}} + \tilde{r}_{0}), 
		\end{multline*}
		where we have the following estimates: for every $A > 0$ there exists $C_{\tilde\Lambda, \Omega, A} > 0$ such that for every $u \in \Omega$: 
		\begin{equation} \label{stimaa1lambda}
		|\partial^{\alpha}_{\xi} \partial^{\beta}_{x} (ia_1)_{\tilde{\Lambda}}(t,x, u,\xi)| \leq C_{\tilde\Lambda, \Omega, A} A^{\alpha+\beta} \alpha!^{\theta} \beta!^{\theta} \langle \xi \rangle^{2(1-\sigma) - \alpha }_{h} \langle x\rangle^{-\sigma/2}.
		\end{equation}
		
		\item Conjugation of $ia_0(t,x, u)$: $e^{\tilde{\Lambda}}(x,D) \circ (ia_0)(t,x,u) \circ \{e^{\tilde{\Lambda}}(x,D)\}^{-1} = \tilde{r}_{0}$.
		
	\end{itemize}
	
	Gathering all the previous computations we get (omitting $(t,x, u,D)$ in the notation)
	\begin{align*}
		e^{\tilde{\Lambda}}(x,D) &(iP_u)\{e^{\tilde{\Lambda}}(x,D)\}^{-1} = \partial_{t} + ia_3(t,D) +\textrm{op}\left(- \partial_{\xi}a_3\partial_x\lambda_2 - \partial_{\xi}a_3 \partial_{x}\lambda_1 + id_{1}\right) \\
		&+ ia_2(t,x,u,D) + \textrm{op}((ia_2)_{\tilde{\Lambda}} + a_2\partial_{\xi}\partial_{x}\lambda_2) + ia_1(t,x,u,D) + \textrm{op}((ia_1)_{\tilde{\Lambda}} + \tilde{r}_0).
	\end{align*}
	where $d_1, (ia_2)_{\tilde{\Lambda}}$ and $(ia_1)_{\tilde{\Lambda}}$ 
	satisfy the estimates \eqref{stimad1}, \eqref{stimaa2lambda}, \eqref{stimaa1lambda} 
	for every $u \in \Omega$. 
%
	

	\subsubsection{Conjugation by $e^{\Lambda_{\rho',k}}(t,D),$ with $\Lambda_{\rho',k}(t,\xi) = \rho'\langle \xi \rangle^{\frac{1}{\theta}}_{h} + k(T-t)\langle \xi \rangle^{2(1-\sigma)}_{h}$}

	\begin{itemize}
		\item Conjugation of $ \partial_t$: $e^{\Lambda_{\rho',k}}(t,D) \circ \partial_{t} \circ e^{-\Lambda_{\rho',k}}(t,D) = \partial_{t} + k\langle D\rangle^{2(1-\sigma)}_{h}$.
		
		\item Conjugation of $ia_3(t,D)$: since $a_3$ does not depend of $x$, we simply have 
		$$
		e^{\Lambda_{\rho',k}}(t,D) \circ ia_3 (t,D) \circ e^{-\Lambda_{\rho',k}}(t,D) = ia_3(t,D).
		$$
		
		\item Conjugation of $\text{op}\{ia_2 - \partial_{\xi}a_3\partial_{x}\lambda_2\}$:
		\begin{align*}
			e^{\Lambda_{\rho',k}}(t,D)\circ \text{op}(ia_2 - \partial_{\xi}a_3\partial_{x}\lambda_2 ) &\circ e^{-\Lambda_{\rho',k}}(t,D)= 
			ia_2(t,x,u,D) \\ 
			&- \text{op}(\partial_{\xi}a_3\partial_{x}\lambda_2) + (b_{2,\rho',k} + \tilde{r}_0)(t,x,u,D)
		\end{align*}
		where $b_{2,\rho',k}$ satisfies: for any $A > 0$ there exists $C_{\lambda_2,\Omega, \rho', k, A} > 0$ such that (for every $u \in \Omega$)
		\begin{equation}\label{equation_remainde_conj_level_2_by_k_D}
			|\partial^{\alpha}_{\xi} \partial^{\beta}_{x} b_{2,\rho',k}(t,x,u,\xi)| \leq C_{\lambda_2,\Omega, \rho', k, A} A^{\alpha+\beta} (\alpha!\beta!)^{\theta} \langle \xi \rangle^{2-(1-\frac{1}{\theta}) - \alpha}_{h} \langle x \rangle^{-\sigma}.
		\end{equation}
		
		\item Conjugation of $(ia_2)_{\tilde{\Lambda}} (t,x,u,D)$: 
		$$
		e^{\Lambda_{\rho',k}}(t,D)\circ (ia_2)_{\tilde{\Lambda}}(t,x,u,D) \circ e^{-\Lambda_{\rho',k}}(t,D)= \{(ia_2)_{\rho', k, \tilde{\Lambda}} + \tilde{r}_0\}(t,x,u,D),
		$$
		where $(ia_2)_{\rho', k, \tilde{\Lambda}}$  satisfies: for any $A > 0$ there exists $C_{\tilde{\Lambda},\Omega, \rho', k, A} > 0$ such that (for every $u \in \Omega$)
		\begin{equation}\label{equation_remainder_conj_ia_2_Lambda_tilde_by_k_D}
			|\partial^{\alpha}_{\xi} \partial^{\beta}_{x} (ia_2)_{\rho',k,\tilde{\Lambda}}(t,x,u,\xi)| \leq C_{\tilde{\Lambda},\Omega, \rho', k, A} A^{\alpha+\beta} (\alpha!\beta!)^{\theta} \langle \xi \rangle^{2-(2\sigma - 1) - \alpha}_{h} \langle x \rangle^{-\sigma}.
		\end{equation}
		
		\item Conjugation of $\text{op}\{ia_1  - \partial_{\xi}a_3 \partial_{x}\lambda_1 + id_{1} + a_2\partial_{\xi}\partial_{x}\lambda_2\}$: we have 
		\begin{align*}
			e^{\Lambda_{\rho',k}}(t,D)&\circ \text{op}(ia_1  - \partial_{\xi}a_3 \partial_{x}\lambda_1 + id_{1} + a_2\partial_{\xi}\partial_{x}\lambda_2) \circ e^{-\Lambda_{\rho',k}}(t,D) \\ 
			&= \text{op}(ia_1  - \partial_{\xi}a_3 \partial_{x}\lambda_1 + id_{1} + a_2\partial_{\xi}\partial_{x}\lambda_2+ b_{1,\rho',k} + \tilde{r}_0),
		\end{align*}
		where $b_{1,\rho', k}$  satisfies: for any $A > 0$ there exists $C_{\tilde{\Lambda},\Omega, \rho', k, A} > 0$ such that (for every $u \in \Omega$)
		\begin{equation}\label{equation_remainder_conj_level_1_Lambda_tilde_by_k_D}
			|\partial^{\alpha}_{\xi} \partial^{\beta}_{x} b_{1,\rho',k}(t,x,u,\xi)| \leq C_{\tilde{\Lambda},\Omega, \rho', k, A} A^{\alpha+\beta} (\alpha!\beta!)^{\theta} \langle \xi \rangle^{1 -(1-\frac{1}{\theta}) - \alpha}_{h} \langle x \rangle^{-\sigma/2}.
		\end{equation}
		
		\item Conjugation of $(ia_1)_{\tilde{\Lambda}}(t,x,u,D)$: 
		$$
		e^{\Lambda_{\rho',k}}(t,D) \circ (ia_1)_{\tilde{\Lambda}}(t,x,u,D) \circ e^{-\Lambda_{\rho',k}}(t,D)= \{(ia_1)_{\rho',k, \tilde{\Lambda}} + \tilde{r}_0\}(t,x,u, D),
		$$
		where $(ia_1)_{\rho', k,\tilde{\Lambda}}$  satisfies: for any $A > 0$ there exists $C_{\tilde{\Lambda},\Omega, \rho', k, A} > 0$ such that (for every $u \in \Omega$)
		\begin{equation}\label{equation_remainder_conj_level_less_1_Lambda_tilde_by_k_D}
			|\partial^{\alpha}_{\xi} \partial^{\beta}_{x} (ia_1)_{\rho', k,\tilde{\Lambda}}(t,x,u,\xi)| \leq C_{\tilde{\Lambda},\Omega, \rho', k, A} A^{\alpha+\beta} (\alpha!\beta!)^{\theta} \langle \xi \rangle^{2(1-\sigma) - \alpha}_{h} \langle x \rangle^{-\sigma/2}.
		\end{equation}
		
	\end{itemize}
	
	Finally, gathering all the previous computations we obtain the following expression for the conjugated operator (provided that the parameter $h$ is sufficiently large)
	\begin{align}\label{coop}
		Q_{\tilde\Lambda, k,\rho'} (iP_u) Q_{\tilde\Lambda, k,\rho'} ^{-1} &= \partial_{t} + k\langle D\rangle^{2(1-\sigma)}_{h} + ia_3(t,D) \\ \nonumber
		&+ \text{op}(ia_2  - \partial_{\xi}a_3\partial_x\lambda_2 + b_{2,\rho', k} + (ia_2)_{\rho',k,\tilde{\Lambda}}) \\ \nonumber
		&+ \text{op}(ia_1 - \partial_{\xi}a_3 \partial_{x}\lambda_1 + id_{1}  + a_2\partial_{\xi}\partial_{x}\lambda_2+ b_{1,\rho',k}  + (ia_1)_{\rho', k, \tilde{\Lambda}} ) \\ \nonumber
		&+ \tilde{r}_0(t,x,u,D), \nonumber
	\end{align}
	where $b_{2,\rho',k}$ satisfies \eqref{equation_remainde_conj_level_2_by_k_D}, $(ia_2)_{\rho',k, \tilde{\Lambda}}$ satisfies \eqref{equation_remainder_conj_ia_2_Lambda_tilde_by_k_D}, $b_{1,\rho',k}$ satisfies \eqref{equation_remainder_conj_level_1_Lambda_tilde_by_k_D}, $(ia_1)_{\rho',k, \tilde{\Lambda}}$ satisfies \eqref{equation_remainder_conj_level_less_1_Lambda_tilde_by_k_D}, $\tilde{r}_0$ is a projective symbol of order zero satifying uniform estimates with respect to $u \in \Omega.$


\subsection{Proof of Theorem \ref{iop}}\label{wpp2}
This Subsection is devoted to the proof of Theorem \ref{iop}. First of all we need some estimates from below for the terms appearing in operator \eqref{coop} in order to apply to these terms  Fefferman-Phong and sharp G{\aa}rding inequalities.
Let us start with the terms $\partial_\xi a_3(t,\xi)\partial_x\lambda_j(x,\xi)= 3a_3 (t)\xi^2 \partial_x\lambda_j(x,\xi)$, $j=1,2$. 

For $|\xi| > 2h$, by \eqref{equation_definition_lambda_2} and \eqref{equation_definition_lambda_1}  we have
\begin{align*}
	- \partial_\xi a_3\partial_x\lambda_2(x,\xi) &=  3M_2 |a_3(t)| \xi^2 \langle x \rangle^{-\sigma}  
	\psi\left(\frac{\langle x \rangle}{\langle \xi \rangle^{2}_{h}}\right) \\
	&=  3M_2 |a_3(t)| \xi^2 \langle x \rangle^{-\sigma}  - 3M_2 |a_3(t)| \xi^2 \langle x \rangle^{-\sigma} \left[  1 - \psi\left(\frac{\langle x \rangle}{\langle \xi \rangle^{2}_{h}}\right) \right],
\end{align*}
\begin{align*}
	- \partial_\xi a_3\partial_x\lambda_1 (x,\xi) &= 3 M_1 |a_3(t)| \xi^2 \langle \xi \rangle^{-1}_{h} \langle x \rangle^{-\frac{\sigma}{2}}  
	\psi\left(\frac{\langle x \rangle}{\langle \xi \rangle^{2}_{h}}\right) \\ 
	&=  3M_1 |a_3(t)| \xi^2 \langle \xi \rangle^{-1}_{h} \langle x \rangle^{-\frac{\sigma}{2}}  - 3 M_1|a_3(t)| \xi^2  \langle \xi \rangle^{-1}_{h} \langle x \rangle^{-\frac{\sigma}{2}} \left[  1 - \psi\left(\frac{\langle x \rangle}{\langle \xi \rangle^{2}_{h}}\right) \right].
\end{align*}
Since $\langle x \rangle \geq \frac12 \langle \xi \rangle^{2}_{h}$ on the support of $(1-\psi)(\langle x \rangle \langle \xi \rangle^{-2}_{h})$, we have
$$
- 3M_2|a_3(t)| \xi^2 \langle x \rangle^{-\sigma} \left[  1 - \psi\left(\frac{\langle x \rangle}{\langle \xi \rangle^{2}_{h}}\right) \right] \geq - 2^{\sigma} 3C' M_2 \langle \xi \rangle^{2(1-\sigma)}_{h},
$$ 
and
$$
- 3M_1|a_3(t)|\xi^2 \langle \xi \rangle^{-1}_{h} \langle x \rangle^{-\frac{\sigma}{2}} \left[  1 - \psi\left(\frac{\langle x \rangle}{\langle \xi \rangle^{2}_{h}}\right) \right] \geq - 2^{\frac{\sigma}{2}}3C'M_1 \langle \xi \rangle^{1-\sigma}_{h},
$$
where $C'= \sup_{t \in [0,T]}|a_3(t)|$. 
In this way we may write ($|\xi| > 2h$)
$$Q_{\tilde{\Lambda},k,\rho'} \circ (iP_u)  \circ Q_{\tilde{\Lambda},k,\rho'}^{-1} = \partial_{t} + ia_3(t)D^3_x + \tilde{a}_2(t,x,u,D) + \tilde{a}_1(t,x,u,D) + \tilde{a}_{2(1-\sigma)}(t,x,D) + r_0(t,x,u,D),
$$
where $r_0$ is an operator of order $0$ and 
\beqsn
Re\, \tilde{a}_2 &=& -Im\, a_2 +3 M_2|a_3(t)| \xi^2  \langle x \rangle^{-\sigma} + Re\, b_{2,\rho',k} + Re\, (ia_2)_{\rho',k,\tilde{\Lambda}} ,
\\
Im\, \tilde{a}_2 &=& Re \, a_2 + Im\, b_{2,\rho',k} + Im\, (ia_2)_{\rho', k, \tilde{\Lambda}} ,
\\
Re\, \tilde{a}_1 &=& -Im\, a_1 + 3|a_3(t)|\xi^2 M_1 \langle \xi \rangle^{-1}_{h} \langle x \rangle^{-\frac{\sigma}{2}}  + Re\, a_2\partial_{\xi}\partial_{x}\lambda_2 
+ Re\, b_{1,\rho',k}  + Re\, (ia_1)_{\rho',k, \tilde{\Lambda}},
\\
\tilde{a}_{2(1-\sigma)} &=& k\langle \xi \rangle^{2(1-\sigma)}_{h}  
-3 |a_3(t)|\xi^2 M_2 \langle x \rangle^{-\sigma} \left[  1 - \psi\left(\frac{\langle x \rangle}{\langle \xi \rangle^{2}_{h}}\right) \right] 
- 3|a_3(t)|\xi^2 M_1 \langle \xi \rangle^{-1}_{h} \langle x \rangle^{-\frac{\sigma}{2}} \left[  1 - \psi\left(\frac{\langle x \rangle}{\langle \xi \rangle^{2}_{h}}\right) \right]. 
\eeqsn

Now we decompose $iIm\, \tilde{a}_2$ into its Hermitian and anti-Hermitian part:
$$
i Im\, \tilde a_2= \frac{i Im\, \tilde a_2+(i Im\, \tilde a_2)^*}{2}+\frac{i Im\, \tilde a_2-(i Im\, \tilde a_2)^*}{2}= H_{Im\,\tilde{a}_2} + A_{Im\,\tilde{a}_2};
$$
we have that $2Re\, \langle A_{Im\,\tilde{a}_2} u, u \rangle = 0$, while $H_{Im\,\tilde{a}_2}$ has symbol 
$$
\sum_{\alpha\geq 1}\frac{i}{2\alpha!}\partial_\xi^\alpha D_x^\alpha Im\, \tilde a_2 = 
\underbrace{\sum_{\alpha\geq 1}\frac{i}{2\alpha!}\partial_\xi^\alpha D_x^\alpha Re\, a_2}_{=:c(t,x,u,\xi)} + 
\underbrace{\sum_{\alpha\geq 1}\frac{i}{2\alpha!}\partial_\xi^\alpha D_x^\alpha \{ Im\, b_{2,\rho',k} + Im\, (ia_2)_{\rho', k, \tilde{\Lambda}}  \}}_{=:e(t,x,u,\xi)}.
$$
The hypothesis on $a_2$ implies 
$$
|\partial^{\alpha}_{\xi}\partial^{\beta}_{x} c(t,x,u,\xi)| \leq C_{\Omega,A} A^{\alpha+\beta} (\alpha!\beta!)^{\theta} \langle \xi \rangle_{h}^{1-\alpha} \langle x \rangle^{-\sigma},
$$
whereas from \eqref{equation_remainde_conj_level_2_by_k_D}, \eqref{equation_remainder_conj_ia_2_Lambda_tilde_by_k_D} and using that $2(1-\sigma) \leq \frac{1}{\theta}$ we obtain 
$$
|\partial^{\alpha}_{\xi}\partial^{\beta}_{x} e(t,x,u,\xi)| \leq C_{\tilde{\Lambda},\Omega, \rho', k, A} A^{\alpha+\beta} (\alpha!\beta!)^{\alpha+\beta} \langle \xi \rangle^{\frac{1}{\theta}}_{h} \langle x \rangle^{-\sigma}. 
$$
We are ready to get the desired estimates from below.  Using the above decomposition we get
\begin{align*}
	e^{\Lambda} \circ (iP_u)  \circ \{e^{\Lambda}\}^{-1} = \partial_{t} &+ ia_3(t)D_x^3 + Re\,\tilde{a}_2(t,x,u,D) + A_{Im\,\tilde{a}_2}(t,x,u,D) \\ 
	&+ (\tilde{a}_1+c+e)(t,x,u,D) + \tilde{a}_{2(1-\sigma)}(t,x,D) + r_0(t,x,u,D).
\end{align*}
Note that $\langle \xi \rangle^{2}_{h} \leq 2 \xi^{2}$ provided that $|\xi| > 2h$. In the next we shall fix $A = 1$ in the estimates and we shall omit the dependence on $A$ in the constants. Estimating the terms of order $2$ we get
\beqs \label{lbestA_2}
Re\, \tilde{a}_2 \geq \left(M_2 \frac{3C_{a_3}}{2} - C_{\Omega} - C_{\lambda_2,\Omega,\rho',k} h^{-(1-\frac{1}{\theta})} - C_{\tilde{\Lambda},\Omega,\rho',k}h^{-(2\sigma-1)} \right)\langle \xi \rangle^{2}_{h} \langle x \rangle^{-\sigma}, \nonumber
\eeqs
where $C_{a_3}$ is the constant appearing in the statement of Theorem \ref{main}.
For the terms of order $1$ we obtain
\beqs \label{lbestA_1}
Re\, (\tilde{a}_1+c+e) \geq  \left(M_1 \frac{3C_{a_3}}{2} - C_{\Omega} - C_{\Omega, \lambda_2} - C_{\tilde{\Lambda},\Omega,\rho',k} h^{-(1-\frac{1}{\theta})} -
C_{\tilde{\Lambda},\Omega,\rho',k}h^{-(2\sigma-1)}\right) \langle \xi \rangle_{h} \langle x \rangle^{-\frac{\sigma}{2}}.  \nonumber
\eeqs
Finally, for the terms of order $\leq 2(1-\sigma)$ we have
\begin{align} \label{lbestA_theta}
	Re\, \tilde{a}_{2(1-\sigma)} &\geq k\langle \xi \rangle^{2(1-\sigma)}_{h}  - 2^{\sigma}3C_{a_3} M_2 \langle \xi \rangle^{2(1-\sigma)}_{h}  - 2^{\frac{\sigma}{2}}3C_{a_3}M_1 \langle \xi \rangle^{1-\sigma}_{h} \nonumber 
	\\
	&\geq \left( k - 2^{\sigma}3C_{a_3} M_2  - 2^{\frac{\sigma}{2}}3C_{a_3}M_1 h^{-(1-\sigma)} \right)\langle \xi \rangle^{2(1-\sigma)}_{h}.
\end{align}

From the previous lower bound estimates we obtain the following proposition.   

\begin{Prop}\label{proposition_Hm_well_posedness_for_the_conjugated_problem}
	There exist constants $M_2 , M_1,k>0$ and $h_{0}=h_0(k,M_2,M_1,T,\Omega,\rho')$ $> 0$ such that for every $h \geq h_0$ the Cauchy problem associated with the conjugated operator \eqref{coop} is well-posed in $H^{m}(\R)$. More precisely, for any Cauchy data $\tilde f \in C([0,T];H^{m}(\R))$ and $\tilde g \in H^{m}(\R)$, there exists a unique solution $w \in C([0,T];H^{m}(\R)) \cap C^{1}([0,T]; H^{m-3}(\R))$ such that the following energy estimate holds: there exists a constant $C_{\Omega, \rho', T} > 0$ depending on $\Omega$, $\rho' > 0$ and $T > 0$ such that
	$$
	\|w(t)\|^{2}_{H^{m}} \leq C_{\Omega,\rho',T} \left( \| \tilde g \|^{2}_{H^{m}}  + \int_{0}^{t} \| \tilde f(\tau)\|^{2}_{H^{m}} d\tau \right), \quad t \in [0,T]
	$$
	for every $u \in \Omega$.
\end{Prop}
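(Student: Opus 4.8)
The plan is to run the classical energy method for $p$-evolution operators, exploiting the fact that in the conjugated operator \eqref{coop} the real parts of the symbols of order $2$, of order $1$ and of order $2(1-\sigma)$ have been arranged, through the choice of $\lambda_2,\lambda_1$ and of the weight $k(T-t)\langle \xi\rangle^{2(1-\sigma)}_h$, so as to be nonnegative for large $|\xi|$. First I would reduce to $m=0$: conjugating \eqref{coop} by $\langle D\rangle^{m}$ modifies each of the three blocks only by a pseudodifferential operator which is one order lower and of the same type as the remainders already present, and leaves $ia_3(t)D_x^3$ unchanged since $a_3$ is independent of $x$; hence it suffices to establish the $L^2$ energy estimate. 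Writing the conjugated Cauchy problem as $\partial_t w+\mathcal B_u(t)w=\tilde f$, where $\mathcal B_u(t)$ is obtained from \eqref{coop} by deleting $\partial_t$ and, for $|\xi|>2h$, written as $ia_3(t)D_x^3+\tilde a_2(t,x,u,D)+\tilde a_1(t,x,u,D)+\tilde a_{2(1-\sigma)}(t,x,D)+r_0(t,x,u,D)$ as above, one has for a sufficiently regular solution $w$ that $\tfrac{d}{dt}\|w(t)\|_{L^2}^2=-2\,\Re\langle \mathcal B_u(t)w,w\rangle+2\,\Re\langle \tilde f,w\rangle$.

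The next step is to bound $\Re\langle\mathcal B_u(t)w,w\rangle$ from below. Since $a_3$ is real and $D_x^3$ is self-adjoint, $\Re\langle ia_3 D_x^3 w,w\rangle=0$. For the second order block I write $\tilde a_2=\Re\tilde a_2+i\,\Im\tilde a_2$ and use the decomposition $i\,\Im\tilde a_2=H_{\Im\tilde a_2}+A_{\Im\tilde a_2}$ recalled above: $A_{\Im\tilde a_2}$ is anti-Hermitian and contributes $0$ to the real part, while $H_{\Im\tilde a_2}$ has symbol $c+e$ of order $1$ (with the estimates stated above), which I absorb into the first order block. Likewise the imaginary parts of the first order symbols split into an anti-Hermitian piece with zero real contribution and a Hermitian piece of order $0$, hence $L^2$-bounded; $r_0(t,x,u,D)$ is $L^2$-bounded; the contribution of the region $|\xi|\le 2h$ of all the symbols comes from operators bounded on $L^2$ by the Calder\'on--Vaillancourt theorem; and $2\,\Re\langle \tilde f,w\rangle\le \|\tilde f\|_{L^2}^2+\|w\|_{L^2}^2$. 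Thus all these contributions are dominated by $C(\|w\|_{L^2}^2+\|\tilde f\|_{L^2}^2)$, and what remains are the three terms $\Re\langle \textrm{op}(\Re\tilde a_2)w,w\rangle$, $\Re\langle \textrm{op}(\Re(\tilde a_1+c+e))w,w\rangle$ and $\Re\langle \textrm{op}(\tilde a_{2(1-\sigma)})w,w\rangle$.

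Here I would invoke the lower bounds for $\Re\tilde a_2$, $\Re(\tilde a_1+c+e)$ and $\tilde a_{2(1-\sigma)}$ derived above. Fixing first $M_2$ large (depending on $C_{a_3}$ and on the constant $C_\Omega$ bounding the coefficients $a_j$), then $M_1$ large (depending also on $M_2$ through $\lambda_2$), then $k$ large (depending on $M_1,M_2$), these bounds give that for $|\xi|>2h$ the symbols $\Re\tilde a_2$ (order $2$), $\Re(\tilde a_1+c+e)$ (order $1$) and $\tilde a_{2(1-\sigma)}$ (order $2(1-\sigma)<1$) are nonnegative; finally I fix $h_0$ large enough. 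The Fefferman--Phong inequality applied to the nonnegative order $2$ symbol $\Re\tilde a_2$ yields $\Re\langle\textrm{op}(\Re\tilde a_2)w,w\rangle\ge -C\|w\|_{L^2}^2$, and the sharp G{\aa}rding inequality applied to the nonnegative symbols $\Re(\tilde a_1+c+e)$ and $\tilde a_{2(1-\sigma)}$ gives $\ge -C\|w\|_{L^2}^2$ as well. Collecting everything we get $\tfrac{d}{dt}\|w(t)\|_{L^2}^2\le C_{\Omega,\rho',T}(\|w(t)\|_{L^2}^2+\|\tilde f(t)\|_{L^2}^2)$, and Gronwall's lemma yields the energy estimate, for general $m$ after undoing the conjugation by $\langle D\rangle^m$; all constants are uniform in $u\in\Omega$ because the symbol estimates \eqref{equation_remainde_conj_level_2_by_k_D}--\eqref{equation_remainder_conj_level_less_1_Lambda_tilde_by_k_D} and Lemma \ref{compositionformula} are.

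Uniqueness follows immediately from the energy estimate. For existence I would use the standard duality/approximation argument: the formal adjoint of the conjugated operator has the same structure (a self-adjoint third order principal part with real coefficient, up to sign, plus lower order terms of the same kind), hence satisfies the analogous energy estimate backward in time, and this produces a weak solution $w\in C([0,T];H^m(\R))$; the equation $\partial_t w=-\mathcal B_u(t)w+\tilde f$ together with $D_x^3 w\in C([0,T];H^{m-3})$ then gives $w\in C^1([0,T];H^{m-3}(\R))$. I expect the only genuinely delicate points to be the bookkeeping of the hierarchy of parameters $M_2\to M_1\to k\to h_0$ and the observation that the order $2$ block must be treated by Fefferman--Phong rather than merely sharp G{\aa}rding, so that its lower bound lands in the $L^2$ norm; the remaining steps are routine.
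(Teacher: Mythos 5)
Your proposal is correct and follows essentially the same route as the paper: the same hierarchy of choices $M_2\to M_1\to k\to h_0$ to make $\Re\tilde a_2$, $\Re(\tilde a_1+c+e)$ and $\tilde a_{2(1-\sigma)}$ nonnegative for large $|\xi|$, Fefferman--Phong on the order-$2$ block, sharp G{\aa}rding on the order-$1$ and order-$2(1-\sigma)$ blocks, and Gronwall. The extra details you supply (reduction to $m=0$, the low-frequency region, the duality argument for existence) are steps the paper leaves implicit, not deviations.
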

\begin{proof} 
	Firts we take $M_2 > 0$ large in order to get 
	\begin{equation}\label{firstlbestimate}
		M_2 \frac{3C_{a_3}}{2} - C_{\Omega} > 0,
	\end{equation}
	then we set $M_1 = M_1(M_2) > 0$ in such way that 
	\begin{equation} \label{secondlbestimate}
		M_1 \frac{3C_{a_3}}{2} - C_{\Omega} - C_{\Omega, \lambda_2} > 0.
	\end{equation}
	Thereafter we choose $k = k(M_2) > 0$ such that
	\begin{equation}\label{sck}
		k - 2^{\sigma}3C_{a_3} M_2 > 0.
	\end{equation}
	Now, making the parameter $h_{0}$ large enough, we obtain 
	$$
	M_2 \frac{3C_{a_3}}{2} - C_{\Omega} - C_{\lambda_2,\Omega,\rho',k} h^{-(1-\frac{1}{\theta})} - C_{\tilde{\Lambda},\Omega,\rho',k}h^{-(2\sigma-1)} \geq 0,
	$$
	$$
	M_1 \frac{3C_{a_3}}{2} - C_{\Omega} - C_{\Omega, \lambda_2} - C_{\tilde{\Lambda},\Omega,\rho',k} h^{-(1-\frac{1}{\theta})} -
	C_{\tilde{\Lambda},\Omega,\rho',k}h^{-(2\sigma-1)}  \geq 0, 
	$$
	$$
	k - 2^{\sigma}3C_{a_3} M_2  - 2^{\frac{\sigma}{2}}3C_{a_3}M_1 h^{-(1-\sigma)} \geq 0.
	$$
	With these choices $Re\, \tilde{a}_2(t,x,u,\xi), Re\, (\tilde{a}_1+c+e)(t,x,u,\xi), Re\, \tilde{a}_{2(1-\sigma)}(t,x,\xi)$ are non-negative for large $|\xi|$. Applying the Fefferman-Phong inequality, cf. \cite{FP}, to $Re\, \tilde a_2$ we have 
	$$
	Re \langle Re\, \tilde{a}_2(t,x,u,D) w,w \rangle_{L^{2}} \geq -C \|w \|^2_{L^2}, \quad w \in \mathscr{S}(\R).
	$$ 	
	By the sharp G{\aa}rding inequality, cf. \cite[Theorem 4.4]{KG}, we also obtain that
	$$
	Re \langle (\tilde{a}_1+c+e)(t,x,u,D) w,w \rangle_{L^{2}} \geq -C \|w\|^2_{L^2}, \quad w \in \mathscr{S}(\R)
	$$ 
	and 
	$$
	Re \langle \tilde{a}_2(1-\sigma)(t,x,D) w,w \rangle_{L^{2}} \geq -C \|w \|^2_{L^2}, \quad w \in \mathscr{S}(\R).
	$$ 
	The constant $C > 0$ that we just write in the above inner product estimates depends on a finite number of the symbols seminorms, in this way we have that $C$ depends on $\Omega, \rho', T, \tilde{\Lambda}$ and $k$. As a consequence we get the energy estimate 
	$$
	\dfrac{d}{dt}\| w(t) \|^{2}_{L^{2}} \leq C_{\Omega,\rho', T} (\| w(t) \|^{2}_{L^{2}} + \| (iP)_{\Lambda} w(t) \|^{2}_{L^{2}}),
	$$ 
	which gives us the well-posedness in $H^{m}(\R)$.
\end{proof}

\begin{Rem}
We underline that the assumption $|a_3(t)|\geq C_{a_3}>0, \forall t\in[0,T]$is crucial in the choice of $M_2,M_1$. If $a_3$ may vanish for some $t\in [0,T]$, then some Levi type conditions are needed on $a_2,a_1$ to let the choice of $M_2,M_1$ work, see \cite{ABZ}. 
\end{Rem}

Now we are ready to prove Theorem \ref{iop}.
\\
{\it Proof of Theorem \ref{iop}}. Given $m\in\R$ and $\theta>1,$ take $f \in C([0,T], H^{m}_{\rho; \theta}(\R))$ and  $g \in H^{m}_{\rho; \theta}(\R)$ for some $\rho > 0$. Let $M_2, M_1, k, h_0 > 0$ so that Proposition \ref{proposition_Hm_well_posedness_for_the_conjugated_problem} holds. Since 
$\tilde{\Lambda}$ and $k(T-t)\langle \cdot\rangle_h^{2(1-\sigma)}$ have order $2(1-\sigma)<\frac{1}{\theta}$, we have by Proposition \ref{contgev} that 
\beqsn
f_{\tilde\Lambda,k,\rho'}:=Q_{\tilde\Lambda,k,\rho'}(t,x,D)f \in C([0,T]; H^{m}(\R))
\\ g_{\tilde\Lambda,k,\rho'}:= Q_{\tilde\Lambda,k,\rho'}(0,x,D) g \in H^{m}(\R),
\eeqsn
provided that $\rho' < \rho$. Proposition \ref{proposition_Hm_well_posedness_for_the_conjugated_problem} ensures that the Cauchy problem associated with the operator in \eqref{coop}, call it $P_{\tilde\Lambda,k,\rho',u}$, is well posed in Sobolev spaces $H^{m}(\R)$. Hence, there exists a unique $w \in C([0,T]; H^{m}(\R))$ satisfying 
$$
\begin{cases}
	P_{\tilde\Lambda,k,\rho',u} w(t,x) = f_{\tilde\Lambda,k,\rho'}(t,x), \\
	w(0,x) = g_{\tilde\Lambda,k,\rho'}(x),
\end{cases}
$$ 
and
\begin{equation}\label{donkey_kong_1}
	\|w(t)\|^{2}_{H^{m}} \leq C_{\Omega, \rho', T} \left( \| g_{\tilde\Lambda,k,\rho'} \|^{2}_{H^{m}}  + \int_{0}^{t} \|f_{\tilde\Lambda,k,\rho'}(\tau)\|^{2}_{H^{m}} d\tau \right), \quad t \in [0,T].
\end{equation}

Setting $v = \{Q_{\tilde\Lambda,k,\rho'}(t,x,D)\}^{-1} w$ we obtain a solution for the original problem \eqref{CPv}.
Let us now study which space the solution $v$ belongs to. We have 
\beqsn
v(t,x) &=&  \{Q_{\tilde\Lambda,k,\rho'}(t,x,D)\}^{-1} w(t,x)
\\
&= &  ^{R}\{e^{-\tilde{\Lambda}}\}\!(x,D) \sum_{j} (-r(x,D))^{j} e^{-k(T-t)\langle D \rangle^{2(1-\sigma)}_{h}}e^{-\rho' \langle D \rangle^{\frac{1}{\theta}}_{h}} w(t,x),\quad w \in H^{m}(\R).
\eeqsn
Since $e^{-\rho' \langle D \rangle^{\frac{1}{\theta}}_{h}} w=:v_1 \in H^{m}_{\rho'; \theta} (\R),$ we get
\beqsn
v(t,x)= \,^{R}\{e^{-\tilde{\Lambda}}(x,D)\} \sum_{j} (-r(x,D))^{j} e^{-k(T-t)\langle D \rangle^{2(1-\sigma)}_{h}} v_1,\quad v_1\in H^{m}_{\rho'; \theta} (\R),
\eeqsn 
but for every $\delta_1>0$ $e^{-k(T-t)\langle D \rangle^{2(1-\sigma)}_{h}} v_1=\underbrace{e^{-k(T-t)\langle D \rangle^{2(1-\sigma)}_{h}}e^{-\delta_1\langle D \rangle^{\frac1\theta}_{h}}}_{\text{order zero}}e^{\delta_1\langle D \rangle^{\frac1\theta}_{h}} v_1=:v_2\in H^m_{\rho'-\delta_1;\theta}(\R),$
so 
$$
v(t,x)= \, ^{R}\{e^{-\tilde{\Lambda}}(x,D)\}\! \underbrace{\sum_{j} (-r(x,D))^{j}}_{\text{order zero}} v_2= \, ^{R}\{e^{-\tilde{\Lambda}}(x,D) \} v_3,\quad v_3\in H^m_{\rho'-\delta_1;\theta}(\R);
$$
but, by Proposition \ref{contgev}, $^{R}\{e^{-\tilde{\Lambda}}(x,D)\}$ maps $H^m_{\rho;\theta}$ spaces into $H^m_{\rho-\delta_2;\theta}$, for every $\delta_2>0$, hence we finally obtain $(\delta= \delta_1+\delta_2)$ that
$v(t,x)\in H^m_{\rho'-\delta;\theta}(\R)$ for all $\delta>0.$
We remark that the solution exhibits (an arbitrarily small) loss $\delta$ in the coefficient of the exponential weight: the solution is less regular than the Cauchy data. 
Moreover, denoting $\rho''=\rho'-\delta$, from \eqref{donkey_kong_1} we obtain that $v$ satisfies the following energy estimate 
\begin{align*}
	\|v(t)\|^{2}_{H^{m}_{\rho'';\theta}} &= \| \{e^{\Lambda}(t,\cdot,D)\}^{-1} w(t)  \|^{2}_{H^{m}_{\rho'';\theta}}  \leq C_{\rho', T} \| w(t) \|^{2}_{H^{m}} \\
	&\leq C_{\rho, T} C_{\Omega, \rho', T}  \left( \|  g_{\tilde\Lambda, k,\rho'} \|^{2}_{H^{m}}  + \int_{0}^{t} \| f_{\tilde\Lambda, k,\rho'}(\tau)\|^{2}_{H^{m}} d\tau \right) \\
	&\leq C_{\Omega, \rho', T}  \left( \| g \|^{2}_{H^{m}_{\rho;\theta}}  + \int_{0}^{t} \|f(\tau)\|^{2}_{H^{m}_{\rho;\theta}} d\tau \right) , \quad t \in [0,T].
\end{align*}

Finally, let us notice that if the data are valued in $H^{m}_{\rho;\theta}(\R)$ for every $\rho>0$, then the solution belongs to $H^{m}_{\rho'';\theta}(\R)$ for every $\rho'' \in (0,\rho)$, that is $v\in C([0,T]; H^{\infty}_{\theta}(\R))$.
\qed

\medskip
The argument of the proof of Theorem \ref{iop}, suitably simplified, provides a well-posedness result in projective Gevrey-Sobolev spaces for genuinely linear $3$-evolution equations, that is when the coefficients of the operator do not depend on $u$. Since also this result is new in the literature we state it here below as a separate result.

\begin{Cor}\label{iop2}
	Let $P$ be a linear differential operator of the form \eqref{3evolop} and assume that  $a_3 \in C([0,T]; \R)$ is such that $|a_3(t)| \geq C_{a_3} > 0$ for all $t \in [0,T]$ and for some constant $C_{a_3}$. Let moreover $\sigma \in \left(\frac12,1\right)$ and $\theta_0<\frac1{2(1-\sigma)}$ such that for $j=0,1,2$ the coefficients $a_j$ satisfy the following assumptions: for every $A>0$ there exists $C_A>0$ such that
	$$
	|\partial^{\beta}_{x} a_j(t,x)| \leq  
	C_A A^{\beta} \beta!^{\theta_0}  \langle x\rangle^{- \frac{j\sigma}2},
	$$
	for every $x \in \R, t \in [0,T]$ and $\beta \in \N_0$. Then for every $m\in\R,\ \rho>0$, $\theta\in \left[\theta_0,\frac1{2(1-\sigma)}\right)$ and  
	$f\in C([0,T]; H^m_{\rho;\theta}(\R))$, $g\in H^m_{\rho;\theta}(\R)$, there exists a unique solution
	$v\in C^1([0,T]; H^{m}_{\rho-\delta;\theta}(\R))$ for every $\delta \in (0, \rho)$ of the Cauchy problem \eqref{genCP}
	and the following energy
	estimate is satisfied:
	\beqs
	\label{enestv}
	\|v(t,\cdot)\|^2_{H^m_{\rho-\delta;\theta}}\leq C_{\rho,T}
	\left(\|g\|^2_{H^m_{\rho;\theta}}+
	\int_0^t\|f(\tau,\cdot)\|^2_{H^m_{\rho;\theta}} \,d\tau\right)\quad
	\forall t\in[0,T],
	\eeqs
	for some positive constant $C_{\rho,T}$. Moreover, if $f\in C([0,T], H^{\infty}_{\theta}(\R))$ and $g\in H^{\infty}_{\theta}(\R)$, then $v$ belongs to $C^{1}([0,T]; H^{\infty}_{\theta}(\R))$.
\end{Cor}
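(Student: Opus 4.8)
The plan is to show that the proof of Theorem \ref{iop} applies essentially verbatim, with the bounded set $\Omega$ playing no role at all. The starting point is the observation that the hypothesis on the coefficients in the corollary coincides \emph{exactly} with estimate \eqref{eq_estimate_linearized_coefficients} of Lemma \ref{compositionformula}, namely $|\partial^\beta_x a_j(t,x)|\le C_A A^\beta\beta!^{\theta_0}\langle x\rangle^{-j\sigma/2}$, with $C_A$ now a fixed constant rather than one depending on $\Omega$. Hence the Fa\`a di Bruno step (Lemma \ref{compositionformula}) is simply omitted, and throughout Subsections \ref{wpp}--\ref{wpp2} all symbol seminorm constants carrying a subscript $\Omega$ are replaced by the fixed constants coming from the hypothesis on the $a_j$.

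First I would introduce the same change of variables $Q_{\tilde\Lambda,k,\rho'}(t,x,D)=e^{\Lambda_{\rho',k}}(t,D)\circ e^{\tilde\Lambda}(x,D)$ with $\tilde\Lambda=\lambda_2+\lambda_1$ defined by \eqref{equation_definition_lambda_2}--\eqref{equation_definition_lambda_1} and $\Lambda_{\rho',k}(t,\xi)=\rho'\langle\xi\rangle_h^{1/\theta}+k(T-t)\langle\xi\rangle_h^{2(1-\sigma)}$, $0<\rho'<\rho$, and carry out the conjugation of $iP=\partial_t+ia_3(t)D_x^3+\sum_{j\le 2}ia_j(t,x)D_x^j$ step by step exactly as in Subsection \ref{wpp}, invoking Theorem \ref{conjthmnew}, Lemma \ref{lemma_inverse_of_e_power_tilde_Lambda}, Proposition \ref{secondconjthm} and the estimates of Lemmas \ref{lemma_estimates_lambda_2}--\ref{lemma_estimates_lambda_1}. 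This produces a conjugated operator of the form \eqref{coop}, whose lower order terms obey the bounds \eqref{stimad1}, \eqref{stimaa2lambda}, \eqref{stimaa1lambda} and \eqref{equation_remainde_conj_level_2_by_k_D}--\eqref{equation_remainder_conj_level_less_1_Lambda_tilde_by_k_D}, all with fixed constants. Then, exactly as in the proof of Proposition \ref{proposition_Hm_well_posedness_for_the_conjugated_problem}, I would choose $M_2$, $M_1$, $k$ (in this order) and finally $h_0$ large enough so that $Re\,\tilde a_2$, $Re\,(\tilde a_1+c+e)$ and $Re\,\tilde a_{2(1-\sigma)}$ are non-negative for $|\xi|$ large, apply the Fefferman--Phong and sharp G{\aa}rding inequalities, and deduce the $L^2$ energy estimate, hence well-posedness of the conjugated Cauchy problem in $H^m(\R)$ with solution $w\in C([0,T];H^m(\R))\cap C^1([0,T];H^{m-3}(\R))$.

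It then remains to transfer the result back. Since $\tilde\Lambda$ and $k(T-t)\langle\cdot\rangle_h^{2(1-\sigma)}$ have order $2(1-\sigma)<1/\theta$, Proposition \ref{contgev} guarantees that $Q_{\tilde\Lambda,k,\rho'}(t,x,D)$ maps $H^m_{\rho;\theta}(\R)$ continuously into $H^m(\R)$ for $\rho'<\rho$, so the conjugated data lie in $H^m(\R)$; conversely, expressing $Q_{\tilde\Lambda,k,\rho'}^{-1}$ via Lemma \ref{lemma_inverse_of_e_power_tilde_Lambda} and applying Proposition \ref{contgev} once more, $v=Q_{\tilde\Lambda,k,\rho'}^{-1}w$ belongs to $C([0,T];H^m_{\rho-\delta;\theta}(\R))$ for every $\delta\in(0,\rho)$ and satisfies \eqref{enestv} with a constant depending only on $\rho$ and $T$. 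For the $C^1$-regularity in $t$ I would use the equation itself: $\partial_t v=if-i\bigl(a_3(t)D_x^3+\sum_{j\le 2}a_j(t,x)D_x^j\bigr)v\in C([0,T];H^{m-3}_{\rho-\delta;\theta}(\R))$, and since the three-derivative loss can be absorbed into an arbitrarily small loss in the exponential weight (one has $H^{m-3}_{\rho-\delta;\theta}(\R)\subset H^m_{\rho-\delta';\theta}(\R)$ for every $\delta'>\delta$), this yields $v\in C^1([0,T];H^m_{\rho-\delta;\theta}(\R))$ for every $\delta\in(0,\rho)$; intersecting over $\rho>0$ gives the final assertion when $f,g\in H^\infty_\theta(\R)$. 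The only genuinely new point with respect to Theorem \ref{iop} is this bookkeeping of the loss, together with the (routine) verification that all symbol estimates and the choices of $M_2,M_1,k,h_0$ are uniform in the now absent set $\Omega$; I do not expect any substantial obstacle.
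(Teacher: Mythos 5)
Your proposal is correct and follows essentially the same route as the paper, which itself only remarks that Corollary \ref{iop2} is obtained by running the proof of Theorem \ref{iop} with the coefficients independent of $u$, so that the Fa\`a di Bruno step of Lemma \ref{compositionformula} is skipped and every $\Omega$-dependent constant becomes a fixed one. Your additional bookkeeping of the $C^1$-in-time regularity (absorbing the three-derivative loss into an arbitrarily small extra loss in the exponential weight) is a correct and welcome elaboration of a point the paper leaves implicit.
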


\section{The quasilinear problem}
\label{finalsection}

In this section we consider the quasilinear Cauchy problem \eqref{CP} and prove Theorem \ref{main}. First of all, by Theorem \ref{tameness}, it is easy to verify that the space $$X_T:=C^1([0,T];H^\infty_\theta(\R))$$ 
is a tame Fr\'echet space endowed with the family of seminorms
\beqsn
\|u\|_k=\sup_{t \in [0,T]} \big\{ |u(t,\cdot)|_k+|D_tu(t,\cdot)|_k \big\} ,
\qquad k\in\N_0,
\eeqsn
for every $\theta >1.$ Let us consider, for every $u \in X_T$, the map
\beqs\label{heart}
J(u):=&&u(t,x)-g(x)+i\int_0^ta_3(s)D_x^3u(s,x)ds+i\int_0^ta_2(s,x,u(s,x))D_x^2u(s,x)ds\\
\nonumber
&&+i\int_0^ta_1(s,x,u(s,x))D_xu(s,x)ds+i\int_0^ta_0(s,x,u(s,x))u(s,x)ds
\\\nonumber
&&-i\int_0^tf(s,x)ds.
\eeqs
\begin{Rem}
		By Lemma \ref{compositionformula}, we have $a_j(t,x,u(t,x))\xi^{j} \in C([0,T];\Gamma_{\theta}^j(\R^{2}))$, then from Pro\-po\-si\-tion \ref{prop_continuity_finite_order_gevrey_sobolev} we conclude that $a_{j}(s,x,u(s,x))D^{j}_{x}u(s,x) \in C([0,T];H^{\infty}_{\theta}(\R))$. This implies that the map $J$ maps $X_T$ into itself.
\end{Rem}
As anticipated in the introduction we shall prove the existence of a unique solution 
$u\in C^1([0,T^*];H^\infty_\theta(\R))$ for some $T^* \in (0,T]$ of the Cauchy problem \eqref{CP} by showing  the existence of a unique solution $u\in C^1([0,T^*];H^\infty_\theta(\R))$ of the integral equation
\beqs
J(u)\equiv0\ {\rm in}\ [0,T^*]\times\R. 
\eeqs
This will be achieved using Theorem \ref{NM}. The choice of rephrasing the Cauchy problem in an integral form comes from the fact that due to the presence of the time derivative, the operator $P(t,x,u, D_t,D_x)$ does not map $X_T$ into itself. 

It is not difficult to prove that $J$ is tame together with all its derivatives. To apply the Nash-Moser Theorem we only need to prove that the
equation $DJ(u)v=h$ has a unique solution $v:=S(u,h)\in X_T$ for all $u,h\in X_T$ and
that the map 
\begin{equation}\label{mapS}
	S:\ X_T\times X_T\to X_T: (u,h)\to v=S(u,h)
\end{equation} 
is smooth tame, where $DJ(u)v$ stands for the derivative of $J$ at $u$ in the direction $v$.

The next Proposition proves that the map $S$ in \eqref{mapS} is well defined.

\begin{Rem}
		We claim that $\lim_{\varepsilon \to 0} a_j(s,x, u(s,x) - \varepsilon v(s,x)) = a_j(s,x,u(s,x))$, wherehe limit is taken with respect to  the topology of $C([0,T]; H^{\infty}_{\theta}(\R))$. Indeed, first we write
		\begin{align*}
			a_j(s,x,u+\varepsilon v) - a_j(s,x,u) &= \int_{0}^{\varepsilon} \frac{d}{dr} \{ a_j(s,x,u+rv) \} dr \\
			&= v\int_{0}^{\varepsilon} {\partial_{w} a_j}(s,x,u+rv) dr 
			\\ &= v \cdot \sigma_\varepsilon(s,x) .
		\end{align*}
		Observe that $\sigma_{\varepsilon} \in C([0,T]; \tilde{\Gamma}^{0}_{\theta}(\R^2))$. Therefore, by Proposition \ref{prop_continuity_finite_order_gevrey_sobolev}, we get $\sigma_{\varepsilon}(s,x) v(s,x) \in C([0,T]; H^{\infty}_{\theta}(\R))$. Moreover, since the norms $|\sigma(s)|_{A}$ are bounded by a constant of the form $\varepsilon C_{\Omega, A}$, $\Omega$ being a bounded neighborhood of $u$, we are able to conclude 
		\begin{align*}
			|\sigma_{\varepsilon}(s,x) v|_{k} \to 0, \quad \text{as} \,\, \varepsilon \to 0,
		\end{align*}
		for every $k \in \N_0$, which finalizes the proof of our claim. In the same manner one gets  
		$$
		\lim_{\varepsilon \to 0} \frac{a_j(s,x,u(s,x) + \varepsilon v(s,x) ) - a_j(s,x,u(s,x))}{\varepsilon} = \partial_{w}a_j(s,x,u(s,x))v(s,x)
		$$
		in $C([0,T], H^\infty_\theta(\R)).$ We shall use extensively these two types of limits in the sequel.
\end{Rem}

\begin{Prop} For every $u, h\in X_T$, there exists a unique $v\in X_T$ solution of the equation $DJ(u)v=h$, and the function $v$ satisfies for every $k\in\N$ the following estimate:
	\beqs
	\label{enestsemin}
	|v(t,\cdot)|^2_{k}\leq C_{\Omega,k,T}
	\left(|h(0)|^2_{k+1}+
	\int_0^t|D_t h(\tau,\cdot)|^2_{k+1} \,d\tau\right), \quad
	\forall t\in[0,T],
	\eeqs
	where $\Omega$ stands for some bounded open neighborhood of $u$.
\end{Prop}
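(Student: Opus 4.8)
\emph{Plan of proof.} The plan is to recognize the equation $DJ(u)v=h$ as (equivalent to) the Cauchy problem for a linear $3$-evolution operator of exactly the type treated in Theorem~\ref{iop}, and then to read off existence, uniqueness and the estimate \eqref{enestsemin} directly from that theorem. First I would compute the directional derivative of $J$: differentiating each integral term in \eqref{heart} and using the two limits recalled in the Remark just before the statement, one gets
\[
DJ(u)v = v(t,x) + i\int_0^t \Big( a_3(s)D_x^3 v + a_2(s,x,u)D_x^2 v + a_1(s,x,u)D_x v + b_0(s,x)\,v \Big)(s,x)\,ds,
\]
where all the coefficients are evaluated at $w=u(s,x)$ and
\[
b_0(s,x) = a_0(s,x,u) + (\partial_w a_0)(s,x,u)\,u + (\partial_w a_1)(s,x,u)\,D_x u + (\partial_w a_2)(s,x,u)\,D_x^2 u .
\]

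The key observation is that, since $u$ is fixed, the Fa\`a di Bruno--type contributions $(\partial_w a_j)(s,x,u)(D_x^j u)\,v$ are all \emph{of order zero} in $v$; moreover, running the argument of Lemma~\ref{compositionformula} also with $\partial_w a_j$ in place of $a_j$ (the classes $\Gamma^{\theta_0,\tau}(\R\times\C)$ being stable under $\partial_w$) together with \eqref{eq_projective_estimates_for_u_in_Omega}, one checks that $b_0 \in C([0,T];\Gamma^0_\theta(\R^2))$ with bounds uniform with respect to $u$ in a bounded neighbourhood $\Omega$ of the given $u$. Hence the first-order coefficient of the linearized operator is exactly $a_1(s,x,u)$, the second-order one exactly $a_2(s,x,u)$, the third-order one $a_3(s)$, and the $w$-dependence of the $a_j$ only perturbs the zeroth-order coefficient.

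Next I would rewrite $DJ(u)v=h$ as a Cauchy problem. Evaluating at $t=0$ gives $v(0,\cdot)=h(0,\cdot)$; applying $\partial_t$ (legitimate because $h\in X_T$ is $C^1$ in $t$ and the integrand above is continuous in $s$ with values in $H^\infty_\theta(\R)$, by the Remark following \eqref{heart} and Proposition~\ref{prop_continuity_finite_order_gevrey_sobolev}) and multiplying by $-i$ yields
\[
\begin{cases}
\big( D_t + a_3(t)D_x^3 + a_2(t,x,u)D_x^2 + a_1(t,x,u)D_x + b_0(t,x) \big) v = D_t h, \\
v(0,x) = h(0,x);
\end{cases}
\]
conversely, every $v\in C([0,T];H^\infty_\theta(\R))$ solving this problem solves $DJ(u)v=h$, and then $v\in C^1([0,T];H^\infty_\theta(\R))$ automatically, by the integral identity. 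The operator on the left is precisely of the form covered by Theorem~\ref{iop}, the only difference being that its zeroth-order coefficient $a_0(t,x,u)$ has been replaced by $b_0(t,x)$; since in the conjugation procedure of Subsection~\ref{wpp} the zeroth-order term is absorbed without effect into the remainder $\tilde{r}_0$, Theorem~\ref{iop} applies verbatim. It provides a unique solution $v$, and since $D_t h\in C([0,T];H^\infty_\theta(\R))$ and $h(0)\in H^\infty_\theta(\R)$, its last assertion gives $v\in C^1([0,T];H^\infty_\theta(\R))=X_T$; uniqueness in $X_T$ follows from the equivalence.

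Finally, the estimate \eqref{enestsemin} comes from specializing the energy estimate \eqref{enestv} of Theorem~\ref{iop} to $m=0$, $\rho=k+1$ and $\delta=1\in(0,\rho)$ (recall $k\ge 1$): since $|w(t,\cdot)|_k=\|w(t,\cdot)\|_{H^0_{k;\theta}}$ for every $w$, and with Cauchy data $h(0)$ and $D_t h$, it reads
\[
|v(t,\cdot)|_k^2 = \|v(t,\cdot)\|_{H^0_{k;\theta}}^2 \le C_{\Omega,k+1,T}\Big( \|h(0,\cdot)\|_{H^0_{k+1;\theta}}^2 + \int_0^t \|D_t h(\tau,\cdot)\|_{H^0_{k+1;\theta}}^2\,d\tau \Big),
\]
which is \eqref{enestsemin} with $C_{\Omega,k,T}:=C_{\Omega,k+1,T}$. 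I expect the only genuinely delicate point to be the derivative computation, specifically the verification that the terms generated by the $w$-dependence of the $a_j$ amount to no more than an order-zero, $\Omega$-uniform perturbation of the linearized operator; once that is in place, everything reduces to the linear theory already established in Theorem~\ref{iop}.
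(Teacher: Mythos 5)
Your proposal is correct and follows essentially the same route as the paper: compute $DJ(u)v$, observe that the $w$-dependence of the coefficients only produces the order-zero perturbation $\tilde a_0=a_0+\sum_{j=0}^2(\partial_w a_j)D_x^j u$ (your $b_0$), recast $DJ(u)v=h$ as the linearized Cauchy problem with data $h(0)$ and $D_t h$, and invoke Theorem~\ref{iop} with $m=0$, $\rho=k+1$, $\delta=1$. The paper even states explicitly, as you do, that the zeroth-order term needs no decay assumptions, so nothing essential differs.
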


\begin{proof}
	By the definition \eqref{heart} of the map $J$, let us compute the
	derivative of $J$, for $u,v\in X_T$:
	\beqsn \label{cuore}
	DJ(u)v=&&\lim_{\varepsilon\to0}\frac{J(u+\varepsilon v)-J(u)}{\varepsilon}\\
	=&&\lim_{\varepsilon\to0}\Big\{v+i\int_0^ta_3(s)D_x^3v(s)ds
	+i\sum_{j=0}^{2}\int_0^t
	\frac{a_j(s,x,u+\varepsilon v)-a_j(s,x,u)}{\varepsilon}D_x^ju(s)ds\\
	&&+i\sum_{j=0}^{2}\int_0^ta_j(s,x,u+\varepsilon v)D_x^jv(s)ds\Big\}\\
	=&&v+i\int_0^t\hskip-0.25cma_3(s)D_x^3v(s)ds+i\sum_{j=0}^{2}\int_0^t\hskip-0.25cm
	(\partial_wa_j)(s,x,u)v(s)D_x^ju(s)ds \\
	&&+i\sum_{j=0}^{2}\int_0^t\hskip-0.25cma_j(s,x,u)D_x^jv(s)ds\\
	=&&v+i\int_0^t\hskip-0.25cma_3(s)D_x^3v(s)ds+i\int_0^t\hskip-0.25cm a_2(s,x,u)D_x^2v(s)ds+i\int_0^t\hskip-0.25cm a_1(s,x,u)D_x v(s)ds
	\\
	&&+i\int_0^t\hskip-0.2cm\underbrace{\Big(a_0(s,x,u)+\sum_{j=0}^{2}(\partial_wa_j)(s,x,u)D_x^ju
		\Big)}v(s)ds=:J_{0,u,0}(v) \label{cuore}
	\eeqsn
	\vskip-0.2cm\hskip+5cm$:=\tilde a_0(s,x,u)$
	\vskip+0.2cm
	where, given $u,g,f\in X_T$, the map  $J_{g,u,f}:\ X_T\to X_T$ is defined by
	\beqsn
	J_{g,u,f}v:=&&v(t,x)-g(x)+i\int_0^ta_3(s)D_x^3v(s,x)ds+i\int_0^t{a}_2(s,x,u(s,x))D_x^2v(s,x)ds
	\\
	&&+i\int_0^t{a}_1(s,x,u(s,x))D_xv(s,x)ds+i\int_0^t\tilde{a}_0(s,x,u(s,x))v(s,x)ds-i\int_0^t
	f(s,x)ds.
	\eeqsn
	Of course, $v$ solves $J_{g,u,f}(v)\equiv0$ if and only if
	it solves the linearized Cauchy problem
	\beqsn
	\begin{cases}
		\tilde{P}_u(D)v(t,x)=f(t,x)\cr
		v(0,x)=g(x),
	\end{cases}
	\eeqsn
	where $\tilde{P}_u(D)$ is obtained from $P_u(D)$ substituting $a_0$ with
	$\tilde{a}_0$.
	
	 Writing
	\beqsn
	J_{0,u,0}(v)-h=J_{0,u,0}(v)-h_0-i\int_0^tD_th(s,x)ds=J_{h_0,u,D_t h}(v)
	\eeqsn
	with $h_0:=h(0,x)$, we see that $v$ is a solution of $DJ(u)v=h$ if and only if it is a solution of $J_{h_0,u,D_t h}(v)=0$, or equivalently of the linearized Cauchy problem
	\beqs
	\label{5bis}
	\begin{cases}
		\tilde{P}_u(D)v(t,x)=D_th(t,x)\cr
		v(0,x)=h_0(x).
	\end{cases}
	\eeqs
	Summing up, the solutions to $DJ(u)v=h$  in $X_T$ coincide with the solutions to \eqref{5bis}.

	The Cauchy problem \eqref{5bis} fulfills the assumptions of Theorem \ref{iop}, indeed, on the one hand the operators $P_u(D)$ and $\tilde{P}_u(D)$ have the same coefficients but for the terms of order $0$ for which no decay assumptions are required; on the other hand, clearly $D_t h \in C([0,T]; H_\theta^\infty(\R))$ and $h_0 \in H_\theta^\infty(\R).$ We obtain by Theorem \ref{iop} a unique solution $v \in C([0,T];H_{\theta}^\infty(\R))$ of \eqref{5bis} which satisfies an energy estimate of the form \eqref{enestv} for every $\rho,\delta>0$ with $0<\delta<\rho$. Taking $\rho=k+1$ and $\delta=1$ in \eqref{enestv}, $k \in \N$, we obtain \eqref{enestsemin}.
\end{proof}

\begin{Lemma}
	The map $S$ defined by \eqref{mapS} is smooth tame.	
\end{Lemma}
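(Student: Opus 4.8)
The plan is to verify the two conditions that remain for applying the Nash--Moser theorem (Theorem \ref{NM}) to the map $S$ of \eqref{mapS}: that $S$ is $C^\infty$ and that each of its derivatives $D^nS\colon (X_T\times X_T)\times(X_T\times X_T)^n\to X_T$ is tame. The first observation to make is that, by \eqref{5bis}, the value $v=S(u,h)$ is the solution of a linear Cauchy problem whose data $(D_th,h(0))$ depend \emph{linearly} on $h$ while the operator $\tilde{P}_u(D)$ depends only on $u$; since $v\mapsto DJ(u)v$ is itself linear, $S(u,\cdot)$ is a linear isomorphism of $X_T$, so $D_hS(u,h)[\ell]=S(u,\ell)$ and the task reduces to analysing the dependence on $u$ and then iterating.

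First I would establish that $S$ is tame. Fixing $u_0\in X_T$ and a bounded open neighbourhood $\Omega$ of $u_0$, the energy estimate \eqref{enestsemin} applied at index $\rho=k+1$ gives $|v(t,\cdot)|_k^2\le C_{\Omega,k,T}\bigl(|h(0)|^2_{k+1}+\int_0^t|D_th(\tau,\cdot)|^2_{k+1}\,d\tau\bigr)\le C_{\Omega,k,T}\|h\|^2_{k+1}$. The quantity $|D_tv(t,\cdot)|_k$ is then read off the equation itself, namely $D_tv=D_th-a_3(t)D_x^3v-a_2(t,x,u)D_x^2v-a_1(t,x,u)D_xv-\tilde{a}_0(t,x,u)v$: since by Lemma \ref{compositionformula} the symbols $a_j(t,x,u(t,x))\xi^j$ lie in $C([0,T];\Gamma_{\theta}^{j}(\R^{2}))$, Proposition \ref{prop_continuity_finite_order_gevrey_sobolev} together with the elementary bound $\langle\xi\rangle^3e^{k\langle\xi\rangle^{1/\theta}}\le C_\theta e^{(k+1)\langle\xi\rangle^{1/\theta}}$ yields $|D_tv(t,\cdot)|_k\le |D_th(t,\cdot)|_k+C_{\Omega,k}\,|v(t,\cdot)|_{k+1}$; combining with \eqref{enestsemin} at index $k+2$ gives $\|S(u,h)\|_k\le C_{\Omega,k}\,\|h\|_{k+2}$ for all $u\in\Omega$, which is a tame estimate (of degree $2$) for $S$ near $u_0$.

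Next I would show that $S$ is $C^1$, with $DS(u,h)[(w,\ell)]=S(u,\ell)+v'$, where $v:=S(u,h)$ and $v'\in X_T$ is the solution furnished by Theorem \ref{iop} of
\begin{equation*}
\tilde{P}_u(D)v'=-\bigl(D_u\tilde{P}_u[w]\bigr)v,\qquad v'(0,x)=0,
\end{equation*}
$D_u\tilde{P}_u[w]$ being the operator obtained by differentiating the coefficients of $\tilde{P}_u$ with respect to $u$ along $w$; its coefficients are finite sums of terms $(\partial^{p}_{w}a_j)(t,x,u(t,x))(D^{\delta}_{x}w)$ and $(\partial^{p}_{w}a_j)(t,x,u(t,x))(D^{\delta}_{x}w)(D^{\delta'}_{x}u)$, which belong to $C([0,T];\tilde{\Gamma}^{0}_{\theta}(\R^{2}))$ by the composition estimates of Lemma \ref{compositionformula} (applied also to the derivatives $\partial^{p}_{w}a_j$, which lie in the same class) and the difference-quotient limits recorded in the Remark immediately preceding the last Proposition. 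To justify the formula one verifies that $\varepsilon^{-1}\{S(u+\varepsilon w,h)-S(u,h)\}\to v'$ in $X_T$: the difference of this quotient and $v'$ solves a Cauchy problem of the type covered by Theorem \ref{iop}, with vanishing data and a source tending to $0$ in $C([0,T];H^\infty_\theta(\R))$ (again by those limits), so the energy estimate drives it to $0$. For the tameness of $DS$ one then bounds $\|DS(u,h)[(w,\ell)]\|_k$ by $C_{\Omega,k}\bigl(\|\ell\|_{k+2}+\sup_{t\in[0,T]}\|(D_u\tilde{P}_u[w])v(t,\cdot)\|_{k+2}\bigr)$, using the previous step and the energy estimate of Theorem \ref{iop} with vanishing data; since $H^\infty_\theta(\R)$ admits tame multiplication estimates (a consequence of the submultiplicativity $\langle\xi\rangle^{1/\theta}\le\langle\xi-\eta\rangle^{1/\theta}+\langle\eta\rangle^{1/\theta}$, valid for $\theta\ge1$, together with Young's inequality), the operators $D_x^j$ act tamely on $H^\infty_\theta(\R)$, and the maps $u\mapsto\partial^{p}_{w}a_j(\cdot,\cdot,u(\cdot,\cdot))$ are tame and bounded uniformly over $u\in\Omega$, the source term is controlled by $C_{\Omega,k}(1+\|w\|_{k+r})$ for a fixed $r$, which gives the desired tame estimate for $DS$.

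The argument then iterates. Differentiating once more the identity satisfied by $D^{n-1}S$ shows that $D^nS$, evaluated on $n$ directions, is again obtained by applying the solution operator of Theorem \ref{iop} (with vanishing initial data) to a source $F_n$ that, through the multivariate Fa\`a di Bruno formula \eqref{Faa}, is a finite sum of products of factors of the form $\partial^{p}_{w}a_j(t,x,u(t,x))$, of $x$-derivatives of the directions, and of $x$-derivatives of the lower-order quantities $D^{n'}S$ with $n'<n$; by the induction hypothesis each such factor is tame, so $F_n$ is tame by the tame multiplication estimates, and $D^nS$ is then tame by the energy estimate of Theorem \ref{iop} and the equation for $\partial_t(D^nS)$. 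Continuity of every $D^nS$---hence $S\in C^\infty$---follows from the same perturbation argument as for $DS$ (the difference of the solutions attached to nearby arguments solves a Theorem \ref{iop} problem with vanishing data and source). I expect the main obstacle to be the bookkeeping in this last step, i.e.\ writing $F_n$ explicitly via \eqref{Faa} and verifying its tame estimate in the multilinear setting; the one point requiring care is that the weight loss $\delta$ of Theorem \ref{iop} must not accumulate along the induction, which it does not, because at every level the energy estimate is re-applied with $\rho$ equal to the target seminorm index plus $1$ and $\delta=1$, a choice independent of $n$.
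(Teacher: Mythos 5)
Your proposal is correct and follows essentially the same route as the paper: tameness of $S$ from the energy estimate \eqref{enestsemin} together with reading $D_t v$ off the equation, differentiability by identifying $DS$ as the solution of a linearized Cauchy problem with the differentiated coefficients as source and passing to the limit in the difference quotients via the same energy estimate and the algebra property of $H^{\infty}_{\theta}(\R)$, and an inductive repetition for the higher derivatives. The only cosmetic differences are that you exploit the linearity of $S(u,\cdot)$ in $h$ explicitly and prove convergence of the difference quotients directly to the candidate limit, whereas the paper first shows that $\{w_\varepsilon\}_\varepsilon$ is a Cauchy sequence in $X_T$ and then identifies its limit.
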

\begin{proof}We have to prove that $S$ and its derivatives $D^m S$ are tame maps for any positive integer $m$. Let us first prove that $S$ is tame. First of all, notice that if we take $u$ in a bounded set $\Omega \subset X_T$, from \eqref{enestsemin} we get
	\begin{equation} \label{quadratino}
	\sup_{t \in [0,T]}|v(t, \cdot)|_k \leq C_{\Omega, k, T} \|h\|_{k+1}
	\end{equation}
	for every $k \in \N$ and for some $C_{\Omega,k,T}>0$. Moreover, from the equation it follows that
	\begin{align*}
		|D_t v(t,\cdot)|_k &= \left| -a_3(t)D_x^3 v(t,\cdot) - \sum_{j=1}^2 a_j (t, \cdot, u)D_x^j v(t,\cdot) + \tilde{a}_{0}(t,\cdot,u)v(t,\cdot) + D_t h(t,\cdot) \right|_k  \\
		&\leq C(|v(t,\cdot)|_{k+1} + \|h\|_k)
	\end{align*}
	for some $C>0$ depending on the set $\Omega$ and the coefficients. Hence 
	\beqs
	\label{9bis}
	\|S(u,h)\|_k = \sup_{t\in[0,T]}\left(|v(t,\cdot)|_k+|D_t v(t,\cdot)|_k
	\right)
	\leq C_{\Omega, k,T} \|h\|_{k+1}\leq C_{\Omega, k, T} \|(u,h)\|_{k+1}
	\eeqs
	for some (possibly larger than before) constant $C_{\Omega, k, T}>0$, and so $S$ is tame.
	\\
	Let us now consider the first derivative of $S$, defined for $(u,h), (u_1,h_1)\in X_T \times X_T $ as
	$$DS(u,h)(u_1,h_1) = \lim_{\varepsilon \to 0} \frac{S(u+\varepsilon u_1, h+\varepsilon  h_1)-S(u,h)}{\varepsilon} = \lim_{\varepsilon \to 0} \frac{v_\varepsilon -v}{\varepsilon}=\lim_{\varepsilon \to 0} w_\varepsilon,$$ where $w_\varepsilon :=\varepsilon^{-1}(v_\varepsilon-v)$ and $v_\varepsilon=S(u+\varepsilon u_1, h+\varepsilon  h_1)$ is the solution of the Cauchy problem \begin{equation}\label{pe} \begin{cases} \tilde{P}_{u+\varepsilon u_1} (D) v=D_t (h+\varepsilon h_1) \\ v(0,x) = h(0,x)+\varepsilon h_1(0,x). \end{cases}\end{equation}
	Since $v_\epsilon, v$ solve the Cauchy problems \eqref{5bis} and \eqref{pe} respectively, it is easy to check that the function $w_\varepsilon$ satisfies 
	\begin{equation}\label{sea}\begin{cases} 
			\tilde{P}_{u+\varepsilon u_1} w_\varepsilon =  f_\varepsilon\\ w_\varepsilon (0,x)= h_1(0,x) \end{cases}
	\end{equation}
	with (omitting $(t,x)$ in the notation for brevity's sake) 
	$$
	f_\varepsilon :=D_t h_1-  \frac{{a}_2(u+\varepsilon u_1)-a_2(u)}\varepsilon D_x^2v-\frac{a_1(u+\varepsilon u_1)-a_1(u)}\varepsilon D_x v-\frac{\tilde{a}_0(u+\varepsilon u_1)-\tilde{a}_0(u)}\varepsilon v.
	$$
	
	If we prove that the sequence $\{w_\varepsilon\}_\varepsilon$ is a Cauchy sequence in $X_T$, then we obtain that $w_\varepsilon$ converges to some $w$ in $X_T$; this function $w$, which is on one hand the first derivative of $S$, is on the other hand the solution to
	\beqsn
	\begin{cases}
		\tilde{P}_u(D)w=f_1\cr
		w(0,x)=h_1(0,x)
	\end{cases}
	\eeqsn
	with
	\beqsn
	f_1:=\lim_{\varepsilon\to0}f_\varepsilon=D_th_1-
	\partial_w a_2(u)u_1D_x^2v-
	\partial_w a_1(u)u_1D_xv-
	\partial_w\tilde{a}_0(u)u_1v,
	\eeqsn
	so, taking $u$ in a bounded set $\Omega$, by Theorem~\ref{iop} it satisfies the energy estimate
	\beqsn
	|w(t,\cdot)|_k^2\leq&& C_{\Omega, k, T} \Big(|h_1(0,\cdot)|_{k+1}^2+
	\int_0^t|f_1(\tau,\cdot)|_{k+1}^2d\tau\Big).
	\eeqsn
	Now if we take $u_1$ in a bounded set $\Omega_1$, by \eqref{quadratino} we get 
	\beqsn
	\nonumber
	|w(t,\cdot)|_k&&\leq C_{\Omega, k,T} (|h_1(0,\cdot)|_{k+1} + \sup_{t \in [0,T]} |f_1(t,\cdot)|_{k+1})
	\\\nonumber
	&&\leq C_{\Omega, \Omega_1, k, T} \sup_{t\in[0,T]}\left(|h_1(t,\cdot)|_{k+1}+
	|D_th_1(t,\cdot)|_{k+1}
	+|v(t,\cdot)|_{k+2}\right)\\
	&&\leq C_{\Omega, \Omega_1, k, T} \left(\|h_1\|_{k+1}+\|h\|_{k+3}\right)
	\eeqsn
	for some positive constant $C_{\Omega, \Omega_1, k, T}$ depending on $\Omega_1, \Omega_2, k, T$ and the coefficients. Also 
	$$
	D_tw=-a_3(t)D_x^3w-a_2(t,x,u)D_x^2w-a_1(t,x,u)D_xw-\tilde{a}_0(t,x,u)w+f_1
	$$
	satisfies a similar estimate, so the first derivative $DS$ (coinciding with $w$) is tame. 
	\\
	Thus, we only need to prove that $\{w_\varepsilon\}_{\varepsilon \in [0,1]}$ is a Cauchy sequence in $X_T$ to conclude that $DS$ is a tame map. 
	\\
	To this aim, arguing as before, let us consider $w_{\varepsilon_1}$ and $w_{\varepsilon_2}$ solutions of the Cauchy problems
	$$\tilde{P}_{u+\varepsilon_iu_1}(D)w_{\varepsilon_i}=f_{\varepsilon_i},\qquad
	w_{\varepsilon_i}(0,x)=h_1(0,x),\quad i=1,2;$$
	then $w_{\varepsilon_1}-w_{\varepsilon_2}$ solves
	\beqsn\begin{cases}
		\tilde{P}_{u+\varepsilon_1u_1}(D)(w_{\varepsilon_1}-w_{\varepsilon_2})
		=f_{\varepsilon_1}-f_{\varepsilon_2}+f_{\epsilon_1,\epsilon_2}\cr
		(w_{\varepsilon_1}-w_{\varepsilon_2})(0,x)=0
	\end{cases}
	\eeqsn
	with (omitting $(t,x)$ in the notation) 
	\beqsn f_{\epsilon_1,\epsilon_2}:&=&\big(a_2(u+\varepsilon_2u_1)-
	a_2(u+\varepsilon_1u_1)\big)D_x^2
	w_{\varepsilon_2}
	\\
	&+&\big(
	a_1(u+\varepsilon_2u_1)-a_1(u+\varepsilon_1u_1)\big)D_x
	w_{\varepsilon_2}+\big(
	\tilde{a}_0(u+\varepsilon_2u_1)-
	\tilde{a}_0(u+\varepsilon_1u_1)\big)w_{\varepsilon_2}\eeqsn
	and the energy estimate \eqref{enestv} gives
	\beqs\label{cau}
	|(w_{\varepsilon_1}-w_{\varepsilon_2})(t,\cdot)|_k\leq C_{\Omega, \Omega_1, k, T}
	&&\sup_{t\in[0,T]}\bigg(|f_{\varepsilon_1}(t,\cdot)
	-f_{\varepsilon_2}(t,\cdot)|_{k+1}+ |f_{\epsilon_1,\epsilon_2}(t,\cdot)|_{k+1}\bigg).
	\eeqs
	By Lagrange theorem, there exist $\tilde{u}_j, j=0,1,2,$
 between $u+\varepsilon_1u_1$ and $u+\varepsilon_2u_1$ such that,
	for all $t\in[0,T]$,
	\beqsn
	|f_{\epsilon_1,\epsilon_2}(t,\cdot)|_{k+1}&\leq&|\varepsilon_1-\varepsilon_2|\sup_{t\in [0,T]}\left(
	|\partial_wa_2(t,\cdot,\tilde{u}_{2})u_1(t,\cdot)D_x^2 w_{\varepsilon_2}(t,\cdot)|_{k+1}\right.
	\\
	&&\left.+|\partial_wa_1(t,\cdot,\tilde{u}_1)u_1(t,\cdot)D_x
	w_{\varepsilon_2}(t,\cdot)|_{k+1}+|\partial_w a_0(t,\cdot,\tilde{u}_0) u_1(t,\cdot)w_{\varepsilon_2}(t,\cdot)|_{k+1}\right)
	\\
	&\leq&|\varepsilon_1-\varepsilon_2|C_{\Omega, \Omega_1, k, T}
	| w_{\varepsilon_2}|_{k+2}
	\eeqsn
	with $C_{\Omega, \Omega_1, k, T}$ independent of $\varepsilon_1, \varepsilon_2 \in [0,1]$, where we used the algebra property of $H^\infty_\theta$ spaces: namely, we know that $H^m_{\rho,\theta}(\R^n)$ is an algebra if $m>n/2$, see for instance \cite{CicZan}. Hence, for every $f,g \in H^{\infty}_{\theta}$ we have $fg \in H^{\infty}_{\theta}$ and, taking $m > \frac{n}{2}$ we may write
		$$
		\|fg\|_{H^{0}_{\rho; \theta}} \leq \| fg \|_{H^{m}_{\rho, \theta}} \leq C \|f \|_{H^{m}_{\rho;\theta}} \| g \|_{H^{m}_{\rho;\theta}} \leq C_2 \|f\|_{H^{0}_{\rho+\varepsilon;\theta}} \|g\|_{H^{0}_{\rho +\varepsilon;\theta}}
		.
		$$
	From the energy inequality for the linearized problem we see that $|w_{\varepsilon}|_{k}$ is bounded with respect to $\varepsilon \in [0,1]$ for every $k \in \N_0$. Hence $f_{\epsilon_1,\epsilon_2}\to 0$ as $\varepsilon_1, \varepsilon_2 \to 0$ in the $H^{\infty}_{\theta}(\R)$ topology. In the same manner one gets $f_{\varepsilon_1}-f_{\varepsilon_2}\to 0$ as $\varepsilon_1, \varepsilon_2 \to 0$.

	This gives that $\{w_{\varepsilon}\}_\varepsilon$ is a Cauchy sequence in $X_T$ and therefore  we can concludes that $DS$ is a tame map.
	To conclude the proof it is sufficient to repeat the previous computations in an inductive procedure similar to the one in the proof of \cite[Theorem 1.3, Step 4]{ABbis}.
\end{proof}

We are now ready for the final step of this paper, that is the proof of the main Theorem \ref{main}.

\begin{proof}[Proof of Theorem \ref{main}]
	As described at the beginning of this Section the existence of a unique local solution
	$u\in C^1([0,T^*];H^\infty_\theta(\R))$ of the Cauchy problem \eqref{CP} is
	equivalent to the existence of a unique solution $u\in C^1([0,T^*];H^\infty_\theta(\R))$ of the equation
	\beqs
	\label{defu}
	u(t,x)&=&g(x)-i\int_0^ta_3(s)D_x^3u(s,x)ds-i\int_0^ta_2(s,x,u(s,x))D_x^2u(s,x)ds\\
	\nonumber
	&&-i\int_0^ta_1(s,x,u(s,x))D_xu(s,x)ds-i\int_0^ta_0(s,x,u(s,x))u(s,x)ds
	\\\nonumber
	&&+i\int_0^tf(s,x)ds.
	\eeqs
	Equation \eqref{defu} provides the first order Taylor expansion of  $u$:
	\beqs\nonumber
	u(t,x)&=&g(x)-it\left(a_3(0)D_x^3g(x)+a_2(0,x,g(x))D_x^2g(x)\right.
	\\\nonumber
	&+&\left.a_1(0,x,g(x))D_xg(x)+a_0(0,x,g(x))g(x)-f(0,x)\right)+o(t)
	\\\label{defw}
	&=:&w(t,x)+o(t),\qquad \mbox{as}\ t\to0.
	\eeqs
	If $t$ is sufficiently small, the function $w\in X_T$ is in a neighborhood of the solution $u$
	we are looking for. The idea of the proof is then the following: we first
	approximate $Jw\in X_T$ by a function $\phi_\varepsilon$ such that $\phi_\varepsilon(t)\equiv 0$ for
	$0\leq t\leq T_\varepsilon\leq T$; then, we apply
	Theorem \ref{NM}, in particular the fact that
	$J$ is a bijection of a
	neighborhood $U$ of $w$ onto a neighborhood $V$ of $Jw$. More precisely, 
	we show that $\phi_\varepsilon\in V$, and then by the local invertibility of $J$ there will be $u\in U$ such that
	$Ju=\phi_\varepsilon\equiv0$ in $[0,T_\varepsilon]$ and hence $u$ is the local in time solution of the Cauchy problem \eqref{CP}.
%
	Let us construct $\phi_\varepsilon$: given $\rho\in C^\infty(\R)$ with $0\leq\rho\leq1$ and
	\beqsn
	\rho(s)=\begin{cases}
		0, &s\leq1\cr
		1, &s\geq2,
	\end{cases}
	\eeqsn
	we define
	\beqsn
	\phi_\varepsilon(t,x):=
	\int_0^t\rho\left(\frac s\varepsilon\right)(\partial_tJw)(s,x)ds.
	\eeqsn
	We immediately see that $\phi_\varepsilon\equiv0$ for $0\leq t\leq\varepsilon$. We are going to prove
	that, for every fixed neighborhood $V$ of $Jw$ in the topology of
	$X_T=C^1([0,T];H^\infty(\R))$, we have $\phi_\varepsilon\in V$ if
	$\varepsilon$ is sufficiently small. To this aim let us notice that 
	\begin{align*}
		Jw(t) - \phi_{\varepsilon}(t) &= \int_0^{t} (\partial_{t}Jw)(s) ds + \underbrace{Jw(0)}_{= 0} - \int_{0}^{t} \rho \left(\frac s\varepsilon\right)(\partial_tJw)(s)ds \\
		&= \int_{0}^{t} \left[1- \rho \left(\frac s\varepsilon \right) \right] (\partial_t Jw)(s)ds.
	\end{align*}
	Hence 
	\begin{equation} \label{norm_estimate}
		\| Jw - \phi_{\varepsilon} \|_{k} \leq \int_{0}^{2\varepsilon} | (\partial_{t}Jw)(s)|_{k}\, ds + 
		\sup_{t \in [0,2\varepsilon]} |(\partial_{t}Jw)(t)|_{k}.
	\end{equation}
	Now we compute explicitly $\partial_t (Jw(t,x))$ and estimate its $k-$seminorms for small values of $t$.
	From \eqref{heart} we get
	\beqsn
	\partial_t (Jw(t,x))
	=&&\partial_tw+ia_3(t)D_x^3w+\sum_{j=0}^2ia_j(t,x,w)D_x^jw-if(t,x).
	\eeqsn
	Using the definition \eqref{defw} of $w$ we get
	\beqsn
	\partial_t (Jw(t,x))&=&-ia_3(0)D_x^3g-\sum_{j=0}^2ia_j(0,x,g)D_x^jg+if(0,x)
	\\
	&&+ia_3(t)D_x^3g+ta_3(t)D_x^3\Big(a_3(0)D_x^3g+\sum_{j=0}^2a_j(0,x,g)D_x^jg
	-f(0,x)\Big)
	\\
	&&+ia_2(t,x,w)D_x^2g+ta_2(t,x,w)D_x^2\Big(a_3(0)D_x^3g+\sum_{j=0}^2a_j(0,x,g)D_x^jg
	-f(0,x)\Big)
	\\
	&&+ia_1(t,x,w)D_xg+ta_1(t,x,w)D_x\Big(a_3(0)D_x^3g+\sum_{j=0}^2a_j(0,x,g)D_x^jg
	-f(0,x)\Big)
	\\
	&&+ia_0(t,x,w)g+ta_0(t,x,w)\Big(a_3(0)D_x^3g+\sum_{j=0}^2a_j(0,x,g)D_x^jg
	-f(0,x)\Big)
	\\
	&&-if(t,x)\\
	=&&i[a_3(t)-a_3(0)]D_x^3g
	+i\sum_{j=0}^{2}\big[a_j(t,x,w)-a_j(0,x,g)\big]D_x^jg\\
	&&+a_3(t)tD_x^3\bigg[a_3(0)D_x^3g+\sum_{j=0}^{2}
	a_j(0,x,g)D_x^jg-f(0,x)\bigg]\\
	&&+\sum_{j=0}^{2}a_j(t,x,w)tD_x^j\bigg[a_3(0)D_x^3g+\sum_{s=0}^{2}
	a_s(0,x,g)D_x^sg-f(0,x)\bigg]\\
	&&+i\big(f(0,x)-f(t,x)\big).
	\eeqsn
	Now observe that for every $k \in \N_0$ we have:
	\begin{itemize}
		
		\item for every $\varepsilon_3 > 0$ there exists $\delta_3 > 0$ depending on $g, a_3$ and $k$ such that \\ $|(a_3(t) - a_3(0))D^{3}_{x}g|_{k} \leq \varepsilon_3$ for every $t \in [0,\delta_3]$,  since $a_3$ is continuous;
		
		\item for every $\varepsilon_2 > 0$ there exists $\delta_2 > 0$ depending on $g, a_0, a_1, a_2$ and $k$ such that 
		$$
		\ds\sum_{j=0}^{2}|\left(a_j(t,x,w)-a_j(0,x,g)\right)D_x^jg|_k \leq \varepsilon_2
		$$ 
		for every $t \in [0,\delta_2]$, since $(t,x) \mapsto a_j(t,x,w(t,x))$ belongs to $C([0,T]; \gamma^{\theta}(\R))$ and henceforth $(t,x) \mapsto a_j(t,x,w(t,x))\xi^{j}$ belongs to $C([0,T];\Gamma_{\theta}^j(\R^{2}))$;
		
		\item $t\ds\sup_{t\in[0,T]}|a_3(t)|\cdot|a_3(0)D_x^6g+\sum_{j=0}^{2}D_x^3(a_j(0,x,g)D_x^jg)-D_x^3f(0,x)|_{k}\leq C(a_3,a_2,a_1,a_0,g,f,k)t$;
		
		\item $t\ds\sum_{j=0}^{2}\left|a_j(t,x,w)D_x^j\left(a_3(0)D_x^3g+
		\sum_{s=0}^{2}a_s(0,x,g)D_x^sg-f(0,x)\right)\right|_k\leq C(a_3,a_2,a_1,a_0,g,f,k)t;$
		
		\item for every $\epsilon_1 > 0$ there exist $\delta_1 > 0$ depending on $f$ and $k$ such that $|f(0,x)-f(t,x)|_k \leq \varepsilon_1$ for every $t \in [0,\delta_1]$, since $f \in C([0,T]; H^{\infty}_{\theta}(\R))$.
		
	\end{itemize}
	In this way we are able to conclude that for every $\tilde{\varepsilon} > 0$ there exists $\tilde{\delta}_{k}$ depending on $k, f, g$ and on the coefficients such that 
	$$
	|(\partial_t Jw)(t)|_{k} \leq \tilde{\varepsilon}, \quad \forall\, t \in [0, \tilde{\delta}_{k}].
	$$ 
	If $2\varepsilon < \tilde{\delta}_k$, from \eqref{norm_estimate} we obtain
	$$
	\|Jw - \phi_\varepsilon \|_k \leq 2\tilde{\varepsilon}.
	$$
	
	Now let $V$ be an open neighborhood of $Jw$. Recalling that the topology of $X_T$ is given by the metric
	$$
	d(u,v) = \sum_{k \geq 0} \frac{1}{2^{k+1}} \frac{\|u-v\|_k}{1+\|u-v\|_k}, \quad \forall u,v \in X_T,
	$$
	we see that there exists $r > 0$ such that the ball $\{ d(u,Jw) < r : u \in X_T\} \subset V$. Take $K \in \N_0$ such that $\sum_{k > K} 2^{-(k+1)} \leq \frac{r}{2}$ and choose $\tilde{\varepsilon} > 0$ such that $\tilde{\varepsilon} < \frac{r}{4K}$. Then, if $2\varepsilon < \tilde{\delta} = \ds\min_{0\leq k \leq K} \tilde{\delta}_{k}$ we infer that 
	\begin{align*}
		d(Jw, \phi_{\varepsilon}) \leq \sum_{k \leq K} \|Jw - \phi_{\varepsilon}\|_{k} + \sum_{k > K} \frac{1}{2^{k+1}} \leq \frac{r}{2} + \frac{r}{2} = r.
	\end{align*}
	So, if $\varepsilon > 0$ is sufficiently small then $\phi_\varepsilon \in V$. 
	
	
	Now suppose in addition that $V=J(U)$ where $U$ is an open neighborhood of $w$ and that $J:U \rightarrow V$ is bijective. Then there exists $u \in U$ such that $Ju = \phi_{\varepsilon}$.
	In particular, this proves that $u\in C^1([0,\varepsilon];H^\infty_\theta(\R))$ is a
	local solution of the Cauchy problem \eqref{CP}.
	Uniqueness follows by standard arguments. Indeed, if $u,v$ are two solutions of the Cauchy problem \eqref{CP}, then $w:=u-v$ solves the linear Cauchy problem
	$$
	\tilde{\tilde P}w=0,\qquad 
	w(0,x)=0,
	$$
	for an operator $\tilde{\tilde P}$ which is exactly as $P_u(D)$ except for the term $a_0$ which is substituted by another term satisfying the same assumptions. From the uniqueness of the solution to the linearized Cauchy problem we get $w=0$, that is $u=v$.
\end{proof}

\begin{Rem}\label{xnelleadingterm}
	In Theorem \ref{main} we assume that the coefficient of the third order term is indepen\-dent of $x$. In the $H^\infty$ setting, it is possible to consider also the more general case $a_3(t,x)D_x^3$, assuming for $a_3$ suitable decay estimates for $|x| \to \infty$, see \cite[Section 4]{ABbis}.  This is not possible in the Gevrey setting, due to the conjugation with $e^{\rho' \langle D \rangle^{1/\theta}}$; indeed, if $a_3$ depends on $x$, even allowing its derivatives with respect to $x$ to decay like $\langle x \rangle^{-m}$ for $m>>0$, we obtain
	$$e^{\rho' \langle D \rangle^{1/\theta}} (ia_3(t,x)D_x^3) e^{-\rho' \langle D \rangle^{1/\theta}} = ia_3(t,x)D_x^3 + \textrm{op }\left(\rho' \partial_\xi \langle \xi \rangle^{\frac{1}{\theta}}\cdot \partial_x a_3 (t,x)\xi^3\right) + \textrm{l.o.t}$$
	with $\rho' \partial_\xi \langle \xi \rangle^{\frac{1}{\theta}}\cdot \partial_x a_3 (t,x)\xi^3 \sim \langle \xi \rangle^{2+\frac1{\theta}} \langle x \rangle^{-m}.$ This term has order $2+\frac{1}{\theta} >2$ and cannot be controlled by other lower order terms whose order does not exceed $2$.
\end{Rem}

\end{document}